\theoremstyle{definition}
\newtheorem{definition}{Definition}[section]
\theoremstyle{plain}
\newtheorem{theorem}[definition]{Theorem}
\newtheorem{proposition}[definition]{Proposition}
\newtheorem{lemma}[definition]{Lemma}
\newtheorem{corollary}[definition]{Corollary}
\newtheorem*{theorem*}{Theorem}
\theoremstyle{remark}
\newtheorem{remark}[definition]{Remark}
\def\N{\mathbb{N}}
\def\C{\mathbb{C}}
\def\R{\mathbb{R}}
\def\Q{\mathbb{Q}}
\def\Z{\mathbb{Z}}
\def\H{\mathbb{H}}
\def\GL{\mathrm{GL}}
\def\PSL{\mathrm{PSL}}
\def\SO{\mathrm{SO}}
\def\SU{\textnormal{SU}}
\def\GSp{\mathrm{GSp}}
\def\gl{\mathfrak{gl}}
\def\sl{\mathfrak{sl}}
\def\so{\mathfrak{so}}
\DeclareMathOperator{\diag}{diag} 
\DeclareMathOperator{\Ind}{Ind}
\DeclareMathOperator{\Hom}{Hom}
\DeclareMathOperator{\Irr}{Irr}
\DeclareMathOperator{\sgn}{sgn}
\DeclareMathOperator{\re}{Re}
\DeclareMathOperator{\im}{Im}
\DeclareMathOperator{\Ad}{Ad}
\DeclareMathOperator{\ad}{ad}
\DeclareMathOperator{\Kfin}{\text{$K$-fin}}
\DeclareMathOperator{\Ker}{Ker}
\DeclareMathOperator{\triv}{triv}
\DeclareMathOperator{\tr}{tr}
\DeclareMathOperator{\id}{id}
\DeclareMathOperator{\Sym}{Sym}
\DeclareMathOperator{\cpt}{cpt}
\newcommand{\fa}{\mathfrak{a}}
\newcommand{\fe}{\mathfrak{e}}
\newcommand{\ff}{\mathfrak{f}}
\newcommand{\fg}{\mathfrak{g}}
\newcommand{\fh}{\mathfrak{h}}
\newcommand{\fii}{\mathfrak{i}}
\newcommand{\fj}{\mathfrak{j}}
\newcommand{\fk}{\mathfrak{k}}
\newcommand{\fl}{\mathfrak{l}}
\newcommand{\fm}{\mathfrak{m}}
\newcommand{\fn}{\mathfrak{n}}
\newcommand{\fp}{\mathfrak{p}}
\newcommand{\fr}{\mathfrak{r}}
\newcommand{\fs}{\mathfrak{s}}
\newcommand{\fu}{\mathfrak{u}}
\newcommand{\fv}{\mathfrak{v}}
\newcommand{\fz}{\mathfrak{z}}
\newcommand{\fI}{\mathfrak{I}}
\newcommand{\fS}{\mathfrak{S}}
\newcommand{\fT}{\mathfrak{T}}
\newcommand{\fX}{\mathfrak{X}}
\newcommand{\cA}{\mathcal{A}}
\newcommand{\cJ}{\mathcal{J}}
\newcommand{\cO}{\mathcal{O}}
\newcommand{\cS}{\mathcal{S}}
\newcommand{\cU}{\mathcal{U}}
\newcommand{\cV}{\mathcal{V}}
\newcommand{\bfi}{\mathbf{i}}
\newcommand{\bfj}{\mathbf{j}}
\newcommand{\bfk}{\mathbf{k}}
\numberwithin{equation}{section}
\title{Epsilon dichotomy for linear models: the Archimedean case}
\author{Miyu Suzuki}
\author{Hiroyoshi Tamori}
\address{Miyu Suzuki \\
Department of Mathematics\\
Kyoto University\\
Kitashirakawa Oiwake-cho, Sakyo-ku, Kyoto 606-8502, Japan}
\email{suzuki.miyu.4c@kyoto-u.ac.jp}
\address{Hiroyoshi Tamori \\
Department of Mathematical Sciences\\
Shibaura Institute of Technology,\\
307 Fukasaku, Minuma-ku, Saitama City, Saitama, 337-8570, Japan}
\email{tamori@shibaura-it.ac.jp}
\begin{document}

\maketitle

\begin{abstract}
Let $G=\GL_{2n}(\R)$ or $G=\GL_n(\H)$ and $H=\GL_n(\C)$ regarded as a subgroup of $G$.
Here,  $\H$ is the quaternion division algebra over $\R$.
For a character $\chi$ on $\C^\times$,  we say that an irreducible smooth admissible moderate growth representation $\pi$ of $G$ is $\chi_H$-distinguished if $\Hom_H(\pi,  \chi\circ\det_H)\neq0$.
We compute the root number of a $\chi_H$-distinguished representation $\pi$ twisted by the representation induced from $\chi$.
This proves an Archimedean analogue of the conjecture by Prasad and Takloo-Bighash (J. Reine Angew. Math., 2011).  
The proof is based on the analysis of the contribution of $H$-orbits in a flag manifold of $G$ to the Schwartz homology of principal series representations. 
A large part of the argument is developed for general real reductive groups of inner type.
In particular,  we prove that the Schwartz homology $H_\ast(H,  \pi\otimes\chi)$ is finite-dimensional and hence it is Hausdorff for a reductive symmetric pair $(G, H)$ and a finite-dimensional representation $\chi$ of $H$.
\end{abstract}

\tableofcontents

\section{Introduction}
\label{sec:intro}

\subsection{Main results}
\label{sec:intro-1}

Let $D=\R$ or $D=\H$,  where $\H$ is the quaternion division algebra and $n$ be a positive integer.
Put 
    \[
    G=G_n=
        \begin{cases}
        \GL_{2n}(\R)  & \text{if $D=\R$} \\
        \GL_n(\H)  & \text{if $D=\H$},
        \end{cases} \quad\quad
    \varepsilon(D)=
        \begin{cases}
        -1  & \text{if $D=\R$} \\
        1  & \text{if $D=\H$}.
        \end{cases}    
    \]
Let $H=H_n$ be a subgroup of $G$ which is isomorphic to $\GL_n(\C)$.
Such a subgroup is unique up to conjugation by the Skolem-Noether theorem.

Let $\chi$ be a character on $\C^\times$.
For $F=\R,\C,\H$, we write $\det={\det}_{\GL_n(F)}$ for the determinant map on $\GL_n(\R)$ when $F=\R,\C$, and for the reduced norm on $\GL_n(\H)$ when $F=\H$.
We put $\chi_{\GL_n(F)}:=\chi\circ\det$.

By an \emph{SAF representation} of $G$, we mean a smooth admissible moderate growth Fr\'echet representation (cf. \cite{BK14}).
We say that an SAF representation $\pi$ of $G$ is \emph{$(H,  \chi_H)$-distinguished} (or \emph{$\chi_H$-distinguished}) if there exists a nonzero continuous $H$-intertwining operator from $\pi$ to $\chi_H$,  i.e.  $\Hom_H(\pi,  \chi_H)\neq0$.
The space $\Hom_H(\pi,  \chi_H)$ is at most one-dimensional if $\pi$ is irreducible \cite{Lu22}.

The goal of this paper is to prove the following theorem.
This is an Archimedean counterpart of the conjecture of Prasad and Takloo-Bighash \cite{PTB11} about representations of $p$-adic groups.

\begin{theorem}\label{thm:main}
Let $\pi$ be an irreducible SAF representation of $G$ and $\phi_\pi\,\colon W_\R\rightarrow\GL_{2n}(\C)$ its $L$-parameter.
Let $\chi$ be a character on $\C^\times$.
If $\pi$ is $\chi_H$-distinguished,  then
\begin{enumerate}
\item\label{ind:main1}
the $L$-parameter $\phi_\pi$ takes values in $\GSp_{2n}(\C)$ with similitude factor $\chi|_{\R^\times}$.
In other words, there exists a symplectic form $\langle\cdot,\cdot\rangle$ on $\C^{2n}$ such that $\langle \phi_{\pi}(w)v_1,\phi_{\pi}(w)v_2\rangle = \chi|_{\R^\times}(w)\langle v_1,v_2\rangle$ for any $w\in W_\R$ and any $v_1,v_2\in\C^{2n}$.
Here,  $\chi|_{\R^\times}$ is regarded as a character on $W_\R$ via the reciprocity isomorphism $W_\R^{ab}\cong\R^\times$;
\item\label{ind:main2}
the root number satisfies 
$\varepsilon\left(\phi_\pi\otimes\Ind_{\C^\times}^{W_\R}(\chi^{-1})\right)=\varepsilon(D)^n\chi(-1)^n$. 
\end{enumerate}
Conversely,  if $\pi$ is an essentially square integrable representation and satisfies (1) and (2),  then $\pi$ is $\chi_H$-distinguished.
\end{theorem}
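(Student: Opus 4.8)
\emph{Overall strategy and the necessity of (1).} The plan is to handle the two implications separately: the necessity of (1) and (2) for \emph{every} irreducible SAF $\pi$ by the Schwartz-homology/$H$-orbit machinery developed earlier, and the converse for the (rather restricted) class of essentially square integrable $\pi$ by a direct construction. For necessity, realize the irreducible SAF representation $\pi$ as a quotient of a principal series $I_P^G(\sigma)$ induced from a parabolic subgroup $P=MN$ whose Levi $M$ is a product of blocks $\GL_1(\R),\GL_2(\R)$ (if $D=\R$) or $\H^\times$ (if $D=\H$), with $\sigma$ essentially square integrable; the blocks correspond to the irreducible constituents $\psi_i$ of $\phi_\pi$. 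Then $\Hom_H(\pi,\chi_H)\hookrightarrow\Hom_H(I_P^G(\sigma),\chi_H)$, which is the continuous dual of the Schwartz homology $H_0(H,I_P^G(\sigma)\,\widehat{\otimes}\,\chi_H^{-1})$. Since $(G,H)$ is a reductive symmetric pair, the finiteness theorem quoted in the abstract makes this space finite-dimensional and Hausdorff, and the filtration of the flag manifold $G/P$ by its finitely many $H$-orbits gives a spectral sequence whose first page is made of the Schwartz homologies attached to the individual orbits. By Frobenius reciprocity along the locally closed immersion $\mathcal{O}_g\cong(H\cap{}^{g}P)\backslash H$, each contribution is a twisted $(H\cap{}^{g}P)$-coinvariant space of $\sigma^g$ (with a modulus and normal-bundle twist). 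Non-vanishing of $\Hom_H(\pi,\chi_H)$ therefore forces some orbit $\mathcal{O}_g$ to support a nonzero such functional; determining which orbits can do so — the stabilizer must meet each block either diagonally against another block, forcing the two to be interchanged by $\psi\mapsto\chi|_{\R^\times}\otimes\psi^\vee$, or in a torus conjugate under $\C^\times$ (resp.\ $\H^\times$) — shows that $\phi_\pi=\bigoplus_i\psi_i$ is a sum of such dual pairs together with self-dual-up-to-$\chi$ blocks carrying an alternating invariant pairing. This is exactly (1).

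\emph{The root number.} Granting (1), factor $\varepsilon\big(\phi_\pi\otimes\Ind_{\C^\times}^{W_\R}(\chi^{-1})\big)$ along the block decomposition of $\phi_\pi$. A dual pair $\{\psi,\chi|_{\R^\times}\otimes\psi^\vee\}$ contributes a factor $\varepsilon(\xi)\varepsilon(\xi^\vee)$ for an auxiliary representation $\xi$, which by the standard identity $\varepsilon(\xi)\varepsilon(\xi^\vee)=(\det\xi)(-1)$ is an explicit sign, while a symplectic-type block contributes the Archimedean $\varepsilon$-factor of a character of $\R^\times$, or of a discrete series of $\GL_2(\R)$, twisted by $\Ind_{\C^\times}^{W_\R}(\chi^{-1})$ — again a known explicit sign. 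The point is that the orbit analysis above records, block by block, which configuration occurs, and an induction on $n$ — peeling off one block and descending to the pair $(\GL_{2(n-1)}(\R),\GL_{n-1}(\C))$ or $(\GL_{n-1}(\H),\GL_{n-1}(\C))$ — multiplies these signs. The factor $\varepsilon(D)$ enters precisely here: for a fixed block the set of $H$-orbits that can support the functional, and hence the attached sign, differs between $G=\GL_{2n}(\R)$ and $G=\GL_n(\H)$ by the Hasse invariant of $D$ (equivalently, by the Jacquet--Langlands sign on a quaternionic block), and over $n$ blocks this discrepancy accumulates to $\varepsilon(D)^n$. Collecting the signs yields $\varepsilon\big(\phi_\pi\otimes\Ind_{\C^\times}^{W_\R}(\chi^{-1})\big)=\varepsilon(D)^n\chi(-1)^n$, which is (2).

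\emph{The converse.} Now suppose $\pi$ is essentially square integrable and satisfies (1) and (2). Since neither $\SL_{2n}(\R)$ nor $\SL_n(\H)$ has a compact Cartan subgroup for $n\ge2$, this hypothesis is non-vacuous only for $n=1$: either $G=\GL_2(\R)$ with $\pi$ an essentially discrete series, or $G=\H^\times$ with $\pi$ finite-dimensional, and in both cases $H=\C^\times$ is an elliptic maximal torus of $G$. The plan is to construct a nonzero element of $\Hom_{\C^\times}(\pi,\chi_H)$ directly. For $G=\GL_2(\R)$, realize $\pi$ as the unique irreducible subrepresentation of a reducible principal series $I_B^G(\lambda_0)$ from the minimal parabolic and take the $\C^\times$-invariant functional given by integrating the restriction to the open $\C^\times$-orbit on $G/B$ against $\chi_H^{-1}$ (equivalently, read off the branching of a discrete series to the elliptic torus); for $G=\H^\times$, use directly the branching of the finite-dimensional $\pi$ to $\C^\times=\R_{>0}\cdot U(1)$. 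In each case the functional is nonzero exactly when the $\C^\times$-weight of $\chi$ lies in the appropriate range and parity class, and (1) (symplectic type with similitude $\chi|_{\R^\times}$) is precisely what makes the parity compatible and confines $\chi$ to a half-line of admissible weights for each form. The two ranges being complementary, at most one of the two Jacquet--Langlands related representations (on $\GL_2(\R)$ and on $\H^\times$) is $\chi_H$-distinguished; comparing with the explicit Archimedean $\varepsilon$-factor shows that the distinguished form is the one selected by the sign $\varepsilon(D)\chi(-1)=\varepsilon\big(\phi_\pi\otimes\Ind_{\C^\times}^{W_\R}(\chi^{-1})\big)$, i.e.\ by (2). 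Alternatively, one may globalize $\pi$ to a cuspidal automorphic representation of symplectic type whose finite components are $\chi$-distinguished by the known $p$-adic Prasad--Takloo-Bighash theorem and whose global $\chi_H$-period is nonzero by Friedberg--Jacquet type results, and descend distinction from the global statement to the Archimedean place, the product formula for $\varepsilon$-factors then forcing (2) at $\infty$.

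\emph{Main obstacle.} The technical heart of the necessity direction is the orbit bookkeeping on $G/P$ for general $n$: one must pin down exactly which $H$-orbits support the twisted coinvariant functionals, confirm (using finiteness and Hausdorffness) that the Schwartz-homology spectral sequence really lets these local contributions control $\Hom_H(\pi,\chi_H)$, and then carry the resulting signs — in particular the $\varepsilon(D)^n$ discrepancy between the two families — coherently through the induction. For the converse, the only subtle step beyond the elementary branching computations is matching the parity and range conditions with the explicit value of the Archimedean $\varepsilon$-factor; the global alternative trades this for the (known but substantial) input of the $p$-adic case and of the non-vanishing of the relevant automorphic periods.
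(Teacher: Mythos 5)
Your proposal follows essentially the same route as the paper: necessity is obtained by passing to the standard module and bounding $\Hom_H(\pi,\chi_H)$ through the $H$-orbit filtration of $G/P$ via Schwartz homology (this is exactly Theorem \ref{thm:key-intro}, proved in Sections \ref{sec:3}--\ref{sec:5}), condition \ref{ind:main2} then follows from multiplicativity of $\varepsilon$ together with $\varepsilon(\xi)\varepsilon(\xi^\vee)=\det\xi(-1)$ and the explicit $n=1$ branching computations, and the converse (nonvacuous only for $n=1$) is the same branching of essentially square integrable representations of $\GL_2(\R)$ and $\GL_1(\H)$ to $\C^\times$ compared with the explicit Archimedean $\varepsilon$-factor. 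The only point to tighten is the sign bookkeeping: the exponent in $\varepsilon(D)^n\chi(-1)^n$ comes from the count $a+2b+c=n$ (half the interchanged one-dimensional blocks, half the interchanged two-dimensional blocks, and the fixed blocks) together with $\bigl(\varepsilon(D)\chi(-1)\bigr)^2=1$ --- in particular interchanged one-dimensional blocks also contribute $-\chi(-1)=\varepsilon(\R)\chi(-1)$ via the determinant identity --- rather than from a per-block ``Hasse invariant'' discrepancy concentrated at the fixed blocks alone.
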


In the case of $p$-adic groups,  the first author \cite{Suz} reduced the conjecture to the case of essentially square integrable representations.
Xue \cite{Xue} and S\'echerre \cite{Sec20} proved a large part of the case of supercuspidal representations.
The first author and Xue \cite{SX} reduced the case of essentially square integrable representations to that of supercuspidal representations.
Note that the character $\chi$ is assumed to be trivial in these works.  

In the Archimedean case,  there are no supercuspidal representations and the assertion for essentially square integrable representations is easy to check.
Therefore,  the main step toward Theorem \ref{thm:main} is the reduction to the case of essentially square integrable representations.

For an irreducible SAF representation $\pi$ of $\GL_N(D)$,  there exist a parabolic subgroup $P$ of $\GL_N(D)$ with a Levi subgroup isomorphic to $\GL_{n_1}(D)\times\cdots\times\GL_{n_r}(D)$ and irreducible essentially square integrable representations $\pi_i$ of $\GL_{n_i}(D)$ for each $i$ with a certain condition on central characters such that $\pi$ is a unique irreducible quotient of $\pi_1\times\cdots\times\pi_r$.
Here,  the product $\pi_1\times\cdots\times\pi_r$ means the normalized parabolic induction $\Ind_P^{\GL_N(D)}(\pi_1\boxtimes\cdots\boxtimes\pi_r)$.
Such a parabolically induced representation is called the standard module above $\pi$.

The idea is to find a necessary condition on $\pi_i$'s for the representation $\pi$ to be $\chi_H$-distinguished.
Then we deduce the conditions \ref{ind:main1} and \ref{ind:main2} in Theorem \ref{thm:main} from those on $\pi_i$'s.
The precise statement  of the necessary condition we need is as follows.

\begin{theorem}[Theorem \ref{thm:key}]\label{thm:key-intro}
Let $\pi$ and $\chi$ be as in Theorem \ref{thm:main} and let $\widetilde{\pi}=\pi_1\times\pi_2\times\cdots\times\pi_r$ be a standard module above $\pi$.
Here,  each $\pi_i$ is an irreducible essentially square integrable representation of $\GL_{n_i}(D)$.
Suppose that $\pi$ is $\chi_H$-distinguished.
Then there exists an involutive permutation $\varsigma\in\fS_r$ such that $n_{\varsigma(i)}=n_i$ and
    \[
        \begin{cases}
        \text{$n_i=2$ when $D=\R$ and $\pi_i$ is $\chi_{\GL_1(\C)}$-distinguished} 
        & \text{if $\varsigma(i)=i$,} \\
        \pi_{\varsigma(i)}\cong \pi_i^\vee
        \cdot\chi_{\GL_{n_i}(D)}
         & \text{if $\varsigma(i)\neq i$.}
        \end{cases}
    \]
\end{theorem}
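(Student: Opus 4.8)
The plan is to study the Schwartz homology $H_\ast(H,\widetilde{\pi}\otimes\chi_H^{-1})$ of the standard module via a geometric filtration coming from the $H$-orbit stratification of the flag manifold $P\backslash G$. Since $\pi$ is $\chi_H$-distinguished and $\pi$ is the unique irreducible quotient of $\widetilde{\pi}$, the space $\Hom_H(\widetilde{\pi},\chi_H)$ is nonzero, so by the finiteness and duality results announced in the introduction (the Schwartz homology is finite-dimensional, hence Hausdorff, for the symmetric pair $(G,H)$), the zeroth Schwartz homology $H_0(H,\widetilde{\pi}\otimes\chi_H^{-1})$ is nonzero. By Shapiro's lemma for Schwartz homology together with the Bruhat-type decomposition of $P\backslash G$ into finitely many $H$-orbits, there is a spectral sequence (or, more simply, a finite filtration with a long exact sequence in homology) whose associated graded pieces are the Schwartz homologies $H_\ast(H\cap {}^gP, (\text{twist of }\pi_1\boxtimes\cdots\boxtimes\pi_r)\otimes\chi)$ attached to the open and to the lower-dimensional orbits. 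Non-vanishing of $H_0$ forces at least one orbit to contribute, and I would run the argument for every orbit that can possibly contribute.

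The key computation is then a local one, orbit by orbit. An $H$-orbit in $P\backslash G$ corresponds to a double coset $H\backslash G/P$, and by the theory of $\theta$-twisted involutions (Matsuki, Rossmann, Springer) for the symmetric pair $(G,H)$ with $H=\GL_n(\C)\subset\GL_{2n}(\R)$ or $\GL_n(\H)$, such a double coset is described by a permutation-with-signs datum: the blocks $\GL_{n_1}(D),\dots,\GL_{n_r}(D)$ of the Levi of $P$ are matched in pairs $\{i,\varsigma(i)\}$ by an involution $\varsigma\in\fS_r$, with the diagonal blocks $\varsigma(i)=i$ giving a copy of a smaller symmetric pair inside $\GL_{n_i}(D)$ and the off-diagonal pairs giving a "Galois" situation where $H\cap{}^gP$ projects isomorphically onto a single $\GL_{n_i}(D)$ embedded diagonally in $\GL_{n_i}(D)\times\GL_{n_{\varsigma(i)}}(D)$. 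For the latter, the contribution is $\Hom_{\GL_{n_i}(D)}(\pi_i\otimes\pi_{\varsigma(i)}\otimes(\text{character}),\C)$, which by the standard Gelfand–Kazhdan / contragredient argument is nonzero only when $\pi_{\varsigma(i)}\cong\pi_i^\vee\cdot\chi_{\GL_{n_i}(D)}$; in particular this forces $n_{\varsigma(i)}=n_i$. For a diagonal block $\varsigma(i)=i$, the contribution is a $\chi$-distinguished-type condition for the pair $(\GL_{n_i}(D),\GL_{n_i/2}(\C))$ (so $n_i$ must be even, and for $D=\R$ one extracts exactly the stated condition that $n_i=2$ and $\pi_i$ is $\chi_{\GL_1(\C)}$-distinguished, using that an essentially square-integrable representation of $\GL_{n_i}(\R)$ with a linear model of this type has $n_i\le 2$).

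Carrying this out requires two technical inputs beyond the orbit bookkeeping: first, the closed-orbit (or more generally any-orbit) contributions must be controlled so that a contributing orbit really yields the clean pairing above with no stray higher-homology obstruction — this is where the finite-dimensionality/Hausdorffness of Schwartz homology for reductive symmetric pairs is essential, since it lets one pass between $\Hom_H(\widetilde\pi,\chi_H)\neq0$ and non-vanishing of the relevant $H_0$ on each stratum and apply a dimension/Euler-characteristic argument when the spectral sequence does not degenerate; second, one needs the analytic localization machinery (Schwartz induction, the Frobenius reciprocity of Bernstein–Kröotz type for Schwartz homology, and the closed-embedding/open-embedding excision sequences) to make the filtration rigorous in the SAF category. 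The main obstacle, I expect, is precisely this last point: ensuring that the geometric filtration of $\widetilde\pi|_H$ by supports on $H$-orbit closures induces a well-behaved long exact sequence in Schwartz homology, and that the graded pieces are genuinely the Schwartz-induced modules one writes down — the orbits here are locally closed but the stratification is not by a single open orbit, so one must iterate the excision argument and track that each step stays in the right category. Once that is in place, the representation-theoretic constraints on the $\pi_i$ read off immediately, and combining the off-diagonal condition $\pi_{\varsigma(i)}\cong\pi_i^\vee\cdot\chi_{\GL_{n_i}(D)}$ with the diagonal condition gives exactly the statement of Theorem~\ref{thm:key-intro}.
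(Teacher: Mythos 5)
Your overall architecture (realize $\widetilde\pi$ as Schwartz sections on $G/P$, filter by $H$-orbits, use excision/long exact sequences, Shapiro's lemma and the finiteness--Hausdorffness of Schwartz homology to convert $\Hom_H(\widetilde\pi,\chi_H)\neq 0$ into non-vanishing of some orbit contribution, then read off conditions on the $\pi_i$ from the stabilizers) is indeed the route the paper takes. But there is a genuine gap at the heart of the argument: you never supply a mechanism that kills the contributions which do \emph{not} have the shape you want. Concretely, two families of terms appear in the upper bound for $H_0$: (i) the higher normal-derivative pieces, i.e.\ the graded quotients involving $\Sym^k$ of the conormal bundle for $k\ge 1$ (your ``stray higher-homology obstruction''), and (ii) the orbits whose associated twisted involution $\sigma_j$ does \emph{not} stabilize the Levi $L$. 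On the second point your orbit bookkeeping is also off: for $(\GL_{2n}(\R),\GL_n(\C))$ the double cosets $H\backslash G/P$ are parametrized by symmetric matrices $S\in M_r(\Z_{\ge 0})$ with row sums $n_i$ and even diagonal (Chommaux), not only by block involutions $\varsigma\in\fS_r$; the ``involution'' orbits are exactly the monomial $S$, and the others must be shown to contribute nothing to $H_0$. Without proving $d_{0,j,k}=0$ for $k>0$ and for the non-Levi-stable orbits, the inequality only says \emph{some} orbit/derivative term is nonzero, and you cannot conclude the existence of $\varsigma$ with the stated properties. An Euler-characteristic argument, which you invoke as a fallback, cannot close this gap: the needed statement is about $H_0$ alone, and cancellation in an Euler characteristic gives no control on individual terms.

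The paper's proof of these vanishings is the real content and it uses precisely the hypotheses your sketch never touches: that $\widetilde\pi$ is a \emph{standard} module (so the exponents $\lambda$ satisfy $\re\lambda(X_\alpha)\ge 0$) and that the $\pi_i$ are essentially square integrable (so the $\fn$-homology of each factor carries a strictly positive exponent $k_i\rho_{\fn_i}$, $k_i>0$). One constructs a central element $X\in\fs^{\sigma_j}$ positive on the relevant cone of roots (a convex-cone argument), checks case-by-case for the symmetric pairs in question that $\delta_P^{1/2}\delta_{H\cap P}^{-1}=\delta_{L\cap\sigma_j N}^{-1/2}$ and that $X$ lies in $[\fh,\fh]$ so the character $\chi_H$ is invisible to it, and then shows via Hochschild--Serre and an infinitesimal-character vanishing criterion that the $X$-eigenvalues on
$H_0(\fm_\C\cap\sigma_j\fn_\C,\tau)\otimes\Sym^k(\fg/(\fh^j+\fp))^\vee_\C\otimes\delta_{L\cap\sigma_jN}^{-1/2}$
are strictly positive unless $\sigma_j(L)=L$ and $k=0$; this also needs Casselman's comparison theorem to compute the $\fn$-homology of the square-integrable factors. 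In addition, the passage from the infinite normal-derivative filtration to the bound on $H_0$ requires finiteness of the \emph{first} homology of each graded piece (to commute homology with the inverse limit), which is part of the finiteness machinery you cite but do not actually deploy. Until these vanishing and finiteness steps are carried out, the ``read off the conditions on the $\pi_i$'' conclusion is not justified.
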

This is an Archimedean analogue of \cite[Theorem 1.3]{Suz}.
As a byproduct of Theorem \ref{thm:key-intro},  we get the following corollary. See Section \ref{sec:5-4} for the proof.

\begin{corollary}\label{cor:dual}
Let $\pi$ be an irreducible SAF representation of $G$ and $\chi$ a character on $\C^\times$.
If $\pi$ is $\chi_H$-distinguished,  then 
    \[
    \pi\cong\pi^\vee\cdot\chi_G.
    \]
\end{corollary}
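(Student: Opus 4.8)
The plan is to derive Corollary~\ref{cor:dual} from Theorem~\ref{thm:key-intro} by matching Langlands data. Fix a standard module $\widetilde{\pi}=\pi_1\times\cdots\times\pi_r$ above $\pi$, where each $\pi_i$ is an irreducible essentially square integrable representation of $\GL_{n_i}(D)$. Since taking the contragredient and twisting by $\chi_G$ are exact functors that commute with normalized parabolic induction and preserve essential square integrability, the representation $\pi^\vee\cdot\chi_G$ is the Langlands quotient of a standard module whose multiset of essentially square integrable constituents is $\{\pi_i^\vee\cdot\chi_{\GL_{n_i}(D)}\}_{i=1}^{r}$: passing to contragredients negates the exponents and replaces $\pi_i$ by $\pi_i^\vee$, twisting by $\chi_G$ then shifts all exponents by the same real amount and replaces $\pi_i^\vee$ by $\pi_i^\vee\cdot\chi_{\GL_{n_i}(D)}$, and reordering so that the exponents become non-increasing does not change the multiset. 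By the Langlands classification for $\GL_N(D)$ it therefore suffices to prove that $\{\pi_i^\vee\cdot\chi_{\GL_{n_i}(D)}\}_{i=1}^{r}=\{\pi_i\}_{i=1}^{r}$ as multisets.

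By hypothesis $\pi$ is $\chi_H$-distinguished, so Theorem~\ref{thm:key-intro} furnishes an involution $\varsigma\in\fS_r$ with $n_{\varsigma(i)}=n_i$ such that $\pi_{\varsigma(i)}\cong\pi_i^\vee\cdot\chi_{\GL_{n_i}(D)}$ whenever $\varsigma(i)\neq i$, whereas $\varsigma(i)=i$ forces $D=\R$, $n_i=2$, and $\pi_i$ to be $\chi_{\GL_1(\C)}$-distinguished. Once we check that in this last case $\pi_i^\vee\cdot\chi_{\GL_2(\R)}\cong\pi_i$, we obtain $\pi_i^\vee\cdot\chi_{\GL_{n_i}(D)}\cong\pi_{\varsigma(i)}$ for every index $i$, and since $\varsigma$ is a permutation the two multisets coincide, which completes the proof.

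It thus remains to treat the self-paired factors, which is an elementary computation with $\GL_2(\R)$. If $\sigma$ is an irreducible essentially square integrable representation of $\GL_2(\R)$ with central character $\omega_\sigma$, then $\sigma^\vee\cong\sigma\otimes(\omega_\sigma^{-1}\circ\det)$, because the transpose-inverse automorphism of $\GL_2$ agrees, up to an inner automorphism, with the automorphism $g\mapsto(\det g)^{-1}g$. If moreover $\sigma$ is $\chi_{\GL_1(\C)}$-distinguished, then evaluating a nonzero element of $\Hom_{\C^\times}(\sigma,\chi)$ on the action of the center $\R^\times\subset\C^\times$ of $\GL_2(\R)$ yields $\omega_\sigma=\chi|_{\R^\times}$, whence $\sigma^\vee\cdot\chi_{\GL_2(\R)}\cong\sigma\otimes\bigl((\omega_\sigma^{-1}\chi|_{\R^\times})\circ\det\bigr)\cong\sigma$. (Alternatively, this is the essentially square integrable case of Theorem~\ref{thm:main}, which is verified directly elsewhere in the paper.) The only nontrivial ingredient in the whole argument is Theorem~\ref{thm:key-intro}; granting it, Corollary~\ref{cor:dual} follows by the bookkeeping above, and I do not anticipate any further obstacle.
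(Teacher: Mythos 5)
Your argument is correct and follows essentially the same route as the paper: apply Theorem \ref{thm:key-intro} to a standard module above $\pi$, identify the standard module of $\pi^\vee\cdot\chi_G$ as the (suitably reordered) product of the $\pi_i^\vee\cdot\chi_{\GL_{n_i}(D)}$, and match the inducing data via the involution $\varsigma$, treating the fixed points by the self-duality $\pi_i\cong\pi_i^\vee\cdot\chi_{\GL_2(\R)}$. The only cosmetic difference is that the paper secures the contragredient step by citing \cite{AV16} (the $L$-parameter of $\pi^{\vee}$ is $\phi_\pi^{\vee}$) and checks via Remark \ref{rem:parameter} that the reordered dual data already satisfies the dominance condition, while you argue both points directly (compatibility of contragredient with normalized induction plus reordering, and the elementary $\GL_2(\R)$ computation $\sigma^{\vee}\cong\sigma\otimes(\omega_\sigma^{-1}\circ\det)$ with $\omega_\sigma=\chi|_{\R^\times}$); the substance is identical.
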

This is a generalization of the self-duality of distinguished representations when $\chi$ is trivial \cite[Theorem 6.7]{BM19}.

\subsection{Outline of the proof}
\label{sec:intro-2}

Let the notation be as in Theorem \ref{thm:key-intro}.
Write $\widetilde{\pi}$ as a parabolically induced representation $\Ind_P^{\GL_N(D)}(\gamma)$ from a standard parabolic subgroup $P$,  where $\gamma=\pi_1\boxtimes\cdots\boxtimes\pi_r$ is an essentially square integrable representation of a Levi subgroup $L$.
We fix a Cartan involution of $G$ and we may assume that $P$ and $L$ are standard with respect to it.
Let $X=G/P$ be the flag manifold and $\cV:=G\times_P(\gamma\cdot\delta_P^{1/2})$ the homogeneous tempered bundle on $X$ (c.f.  \cite[Proposition 6.7]{CS21}).
Here,  $\delta_P$ is the modular character of $P$.
Then,  the representation $\widetilde{\pi}$ is realized on the space $\cS(X,  \cV)$ of the Schwartz sections of $\cV$.

We take an decreasing sequence of open subsets $U_0:=X\supset U_1 \supset \cdots \supset U_{j_0-1} \supset U_{j_0}$,  so that $j_0$ is the number of $H$-orbits in $X$ and $U_j\setminus U_{j+1}$ is an $H$-orbit for each $j=0,  \ldots,  j_0-1$. 
Set $\Pi_j:=\cS(U_j,  \cV|_{U_j})$,  the space of Schwartz sections of the restriction to $U_j$ of $\cV$.
Then each $\Pi_j$ is a smooth Fr\'echet representation of moderate growth (we say SF representation for short) of $H$,  $\Pi_0=\widetilde{\pi}|_H$ and we obtain a descending sequence of $H$-subrepresentations
    \[
    \Pi_0\cdot\chi_H^{-1} \supset \Pi_1\cdot\chi_H^{-1}\supset
     \cdots \supset \Pi_{j_0}\cdot\chi_H^{-1}
    \]
of $(\widetilde{\pi}|_H)\cdot\chi_H^{-1}$.
In the case of $p$-adic groups, we can write the successive quotients $\rho_j:=\Pi_{j-1}/\Pi_j$ explicitly as compactly induced representations.
Hence the problem quickly reduces to the distinction of essentially square integrable representations.
This is the content of \cite{Suz}.
However,  in the current situation,  this is not the case and we must carefully analyze $\rho_j$ taking into account their topology.  
To that end,  we use the theory of Schwartz homology introduced by \cite{CS21}.
Our proof is inspired by \cite{Xue2}.

The Schwartz homology $H_\ast(H,  \Pi)$ of an SF representation $\Pi$ of $H$ is the derived functor of the coinvariant functor 
    \[
    \Pi \mapsto H_0(H,  \Pi)=\Pi_H:=\Pi/\sum_{h\in H}(h-1)\Pi.
    \]
They are linear topological spaces, which are not necessarily Hausdorff.
In particular,  the continuous dual of $H_0(H,  \Pi\cdot\chi_H^{-1})$ is $\Hom_H(\Pi,  \chi_H)$.
Write the $H$-orbit $U_{j-1}\setminus U_j$ as $Hg_jP/P$ with some $g_j\in G$ and set $Q_j:=H\cap g_jPg_j^{-1}$.
Let $\fg$,  $\fh$ and $\fp$ be the Lie algebras of $G$,  $H$ and $P$,  respectively.
Then,  each $\rho_j$ has a descending sequence of $H$-subrepresentations $\rho_j=\rho_{j,  0} \supset \rho_{j,  1} \supset \rho_{j,  2} \supset \cdots$ such that
    \begin{equation}\label{eq:Borel-intro}
    \rho_j\cong \varprojlim_k \rho_j/\rho_{j,  k},  \quad\quad
    \rho_{j,  k}/\rho_{j,  k+1}\cong \cS(H/Q_j,  
    H\times_{Q_j}({}^{g_j^{-1}}(\gamma\cdot\delta_P^{1/2})\otimes 
    \Sym^k(\fg/\fs_j)_\C^\vee)),
    \end{equation}
where $\fs_j=\fh+\Ad(g_j)\fp$ and $H\times_{Q_j}(\fg/\fs_j)_\C^\vee$ is the complexification of the conormal bundle for the closed subset $Hg_jP/P$ of $U_{j-1}$.

The short exact sequences $0 \rightarrow \Pi_j \rightarrow \Pi_{j-1} \rightarrow \rho_j \rightarrow 0$
and $0 \rightarrow \rho_{j, k+1} \rightarrow \rho_{j,  k} \rightarrow 
    \rho_{j,  k}/\rho_{j,  k+1} \rightarrow 0$ yield long exact sequences of Schwartz homologies
    \[
    \cdots \rightarrow H_l(H,  \Pi_j\cdot\chi_H^{-1}) \rightarrow 
    H_l(H,  \Pi_{j-1}\cdot\chi_H^{-1}) \rightarrow 
    H_l(H,  \rho_j\cdot\chi_H^{-1}) \rightarrow 
    H_{l-1}(H,  \Pi_j\cdot\chi_H^{-1}) \rightarrow \cdots,
    \]
and 
    \[
    \cdots \rightarrow H_l(H,  \rho_{j,  k+1}\cdot\chi_H^{-1}) \rightarrow 
    H_l(H,  \rho_{j,  k}\cdot\chi_H^{-1}) \rightarrow 
    H_l(H,  (\rho_{j,  k}/\rho_{j,  k+1})\cdot\chi_H^{-1}) \rightarrow 
    H_{l-1}(H,  \rho_{j,  k+1}\cdot\chi_H^{-1}) \rightarrow \cdots.
    \]
Set $d_{l,  j,  k}=\dim H_l(H,  (\rho_{j,  k}/\rho_{j,  k+1})\cdot\chi_H^{-1})$.
From \eqref{eq:Borel-intro} and the above two long exact sequences,  at least formally we obtain 
    \begin{equation}\label{eq:upbdd-intro}
    \dim H_l(H,  \Pi_0\cdot\chi_H^{-1}) \leq 
    \sum_{j=1}^{j_0} \sum_{k=0}^\infty d_{l,  j,  k}.
    \end{equation}
We show that $d_{l,  j,  k}<\infty$ for any $l,  j,  k$ and $d_{l,  j,  k}=0$ except for finitely many $k$.
Thus the right hand side of \eqref{eq:upbdd-intro} makes sense and this inequality holds.
These arguments are developed for more general setting to obtain $\dim H_l(H,  \Pi\otimes\chi)<\infty$ for any reductive symmetric pair $(G, H)$, any finite length SAF representation $\Pi$ of $G$ and any finite-dimensional SF representation $\chi$ of $H$.
By \cite[Proposition 1.9]{CS21},  this proves that $H_l(H,  \Pi\otimes\chi)$ is Hausdorff.

Let $\sigma$ be the involution on $G$ so that $H=G^\sigma$ and set $\sigma_j=\Psi(g_j)^{-1} \circ \sigma \circ \Psi(g_j)$.
Here,  $\Psi(g_j)$ denotes the inner automorphism of $G$ attached to $g_j$.
Then we prove the following two vanishing results on $d_{l,  j,  k}$:
\begin{itemize}
\item For any $j$ and $k>0$,  $d_{0,  j,  k}=0$. 
\item If $\sigma_j(L) \neq L$,  then $d_{0,  j,  k}=0$,  where $L$ is the standard Levi subgroup of $P$.
\end{itemize}
Hence \eqref{eq:upbdd-intro} for $l=0$ becomes
    \[
    \dim\Hom_H(\Pi_0,  \chi_H)=\dim H_0(H,  \Pi_0\cdot\chi_H^{-1})\leq 
    \sum_{j: \sigma_j(L)=L} d_{0,  j,  0}.
    \]
One can see that $H_0(H,  (\rho_{j,  0}/\rho_{j,  1})\cdot\chi_H^{-1})$ is isomorphic to $H_0(L^{\sigma_j},  \gamma\cdot{}^{g_j}\chi_H^{-1})$ if $\sigma_j(L)=L$.
Hence the right hand side of the above inequality is the sum of $d_{0,  j,  0}=\dim\Hom_{L^{\sigma_j}}(\gamma,  {}^{g_j}\chi_H)$.

Now we write $L=\GL_{n_1}(D) \times\GL_{n_2}(D) \times\cdots\times\GL_{n_r}(D)$.
Note that $n_i\in\{1,  2\}$ and $n_1+\cdots+n_r=2n$ if $D=\R$ and $r=n$ and $n_i=1$ for all $i$ if $D=\H$.
By the explicit choice of the representatives $\{g_j\}_j$ given by \cite{Cho19},  we obtain the following description on $\sigma_j|_L$ which preserves $L$.
There exists an involution $\varsigma\in\fS_r$ such that $n_{\varsigma(i)}=n_i$ for each $i$,  $n_i=2$ if $\varsigma(i)=i$ and $D=\R$, and 
    \[
    \sigma_j(x_1,  x_2,  \ldots,  x_r)=
    (\upsilon_1(x_{\varsigma(1)}),  \upsilon_2(x_{\varsigma(2)}),  \ldots,  
    \upsilon_r(x_{\varsigma(r)})), \quad\quad x_i\in\GL_{n_i}(D).
    \]
Here,  $\upsilon_i$ is an inner automorphism of $\GL_{n_i}(D)$ which satisfies $\upsilon_i^2=\id$ and $\GL_{n_i}(D)^{\upsilon_i}$ is isomorphic to $\GL_1(\C)$ if $\varsigma(i)=i$.
It is also checked that the character $({}^{g_j}\chi_H)|_{L^{\sigma_j}}$ is given by
    \[
    {}^{g_j}\chi_H(x_1,  \ldots,  x_r)=
    \prod_{\varsigma(i)<i}\chi_{\GL_{n_i}(D)}(x_i)
    \prod_{\varsigma(i)=i}\chi_{\GL_1(\C)}(x_i).
    \]
Therefore $\Hom_{L^{\sigma_j}}(\gamma,  {}^{g_j}\chi_H)\neq0$ if and only if
    \[
        \begin{cases}
        \text{$\pi_i$ is $\chi_{\GL_1(\C)}$-distinguished} 
        & \text{if $\varsigma(i)=i$} \\
        \pi_{\varsigma(i)}\cong \pi_i^\vee
        \cdot\chi_{\GL_{n_i}(D)}
         & \text{if $\varsigma(i)\neq i$}.
        \end{cases}
    \]
This proves Theorem \ref{thm:key-intro}.

This paper is organized as follows.
In Section \ref{sec:2},  we recall the local Langlands correspondence for $\GL_N(D)$ and deduce Theorem \ref{thm:main} from Theorem \ref{thm:key-intro}.
Section \ref{sec:3} is devoted to recall the necessary definitions and facts about Schwartz homology and relative Lie algebra homology.
Several lemmas about the structure of symmetric pairs are prepared in Section \ref{sec:4}.
In Section \ref{sec:5-1},  we obtain an upper bound for the dimension of the Schwartz homology and in Section \ref{sec:5-2},  we prove $\dim H_\ast(H, \pi\otimes\chi)<\infty$ for an SAF representation $\pi$ of finite length of a real reductive group $G$, a symmetric subgroup $H$ and a finite-dimensional representation $\chi$ of $H$.
We show in Section \ref{sec:5-3} that under certain conditions,  the (normal derivatives for) $H$-orbits in a flag manifold do not contribute to the upper bound for $\dim H_0(H,  \pi)$.
Using this fact,  we prove Theorem \ref{thm:key-intro} in Section \ref{sec:5-4}.

\subsection{Notations}\label{sec:intro-3}
In this section, we introduce notations which are used throughout this paper.

For a positive integer $n$, set $[1,n]:=\{1,2,\ldots,n\}$ and let $\fS_n$ be the symmetric group of degree $n$.
For a complex number $z$,  let $\bar{z}$ be its complex conjugate  and $|z|=\sqrt{z\bar{z}}$ the usual absolute value.
The unit circle of the norm 1 complex numbers is denoted as $\C^1$.
Let $\H=\R+\R\bfi+\R\bfj+\R\bfk$ denote Hamilton's quaternion division algebra.
The main involution (conjugation) on $\H$ is denoted by $z\mapsto\bar{z}$.
Let $\sgn$ denote the sign character on $\R^\times$.
We regard it as a character on $G_n$ by composing with ${\det}_{G_n}$.

Lie groups will be denoted by Latin capital letters and their Lie algebras by corresponding lower case German letters.
We put subscript $\C$ to denote their complexifications.
Given an involution $\sigma$ on a Lie group $G$, we use the same letter to write the corresponding involution on $\fg$, and the one on the root system of $\fg$ with respect to a $\sigma$-invariant Cartan subalgebra.
We write $\fg^{\sigma}$, $\fg^{-\sigma}$ for the eigenspace of $\sigma$ with eigenvalue $1$, $-1$, respectively.
If $\fg$ is abelian and $V$ is a semisimple $\fg$-module, we also use $\sigma$ to write the corresponding involution on the set $\Sigma$ of weights in $V$, and write $\Sigma^{\sigma}$, $\Sigma^{-\sigma}$ for the set of fixed weights by $\sigma$, $-\sigma$, respectively, and put $\rho_V:=\frac{1}{2}\tr(\ad(\cdot)|V)\in\fg^{\vee}$. Here $\fg^{\vee}$ denotes the linear dual of $\fg$.

We write $\fz(\fg), \fz_{\fg}(\fs)$ for the center of $\fg$, the centralizer of a subspace $\fs$ in $\fg$, respectively.

For $g\in G$,  let $\Psi(g)\,\colon x\mapsto gxg^{-1}$ be the inner automorphism of $G$.
Given a representation $\pi$ of $G$ and $g\in G$, we write ${}^g\pi$ for the representation of $G$ where the representation space equals the one of $\pi$ and the action is given by ${}^g\pi(h)=\pi(\Psi(g)(h))=\pi(ghg^{-1})$ for any $h\in G$.

By \emph{a real reductive group},
we use the definition of \cite[Section 2.1.1]{Wal84}.
We call a symmetric pair $(G,H)$ \emph{reductive} if $G$ is real reductive.
For a parabolic subgroup $P$ of a real reductive group $G$ with a Langlands decomposition $P=MAN$, let $\bar{P}$ be the opposite of $P$ such that $P\cap\bar{P}=MA$ and $\bar{N}$ denote the unipotent radical of $\bar{P}$.

Smooth Fr{\'e}chet representations of moderate growth are called 
SF representations for short. 
Similarly, smooth admissible Fr{\'e}chet representations of moderate growth are called SAF representations.
Recall that an SAF globalization of a Harish-Chandra module $V$ is unique up to isomorphisms and is a nuclear Fr\'echet space (see \cite{BK14} for example). It is called the Casselman-Wallach globalization of $V$.

For a real reductive group $G$ and an SAF representation $\pi$ of finite length of $G$, we write $\pi^{\vee}$ for the contragredient SAF representation, that is, the Casselman-Wallach globalization of the contragredient of the underlying $(\fg_{\C},K)$-module of $\pi$ for a maximal compact subgroup $K$ of $G$.

Write $(\SO(2)^{\sim})^\wedge$ for the unitary dual of the universal cover $\SO(2)^{\sim}$ of the special orthogonal group $\SO(2)$.
We fix an isomorphism $\R\cong (\SO(2)^{\sim})^\wedge; t\mapsto \xi_t$ of topological groups so that $\sl_2(\R)\cong \xi_{-2}\oplus\xi_0\oplus\xi_2$ via the adjoint action.

\section{Reduction}
\label{sec:2}

\subsection{Langlands correspondence for $\GL_N(D)$}
Let $N$ be a positive integer.
In this subsection,  we summarize basic facts about $L$-parameters and root numbers of representations of $\GL_N(D)$.
For details,  see \cite{K94},  \cite{J79} and \cite{T79}.

First we recall the classification of semisimple representations of \emph{the Weil group} of $\R$.
The Weil group $W_\R$ of $\R$ is the non-split extension of $\C^\times$ by $\Z/2\Z$:
    \[
    W_\R=\C^\times\sqcup\C^\times j,  \quad\quad jzj^{-1}=\bar{z},  \quad j^2=-1.
    \]
The commutator subgroup $[W_\R,  W_\R]$ is $\C^1$ and the abelianization $W_\R^{ab}=W_\R/[W_\R,  W_\R]$ is identified with $\R^\times$ via \emph{the reciprocity map} $r_\R\,\colon\R^\times\xrightarrow{\sim} W_\R^{ab}$ given by
    \[
    r_\R(x)=
        \begin{cases}
        \sqrt{x}\cdot[W_\R,  W_\R] & (x>0), \\
        j\sqrt{-x}\cdot[W_\R,  W_\R] & (x<0).
        \end{cases}
    \]
Let $\Phi(W_\R)$ (resp.\,$\Irr(W_\R)$) be the set of equivalence classes of  finite-dimensional semisimple (resp.\,irreducible) complex representations of $W_\R$.

\begin{lemma}\label{lem:L-param}
For $k\in\Z$ and $\lambda\in\C$,  let $\theta_{k,  \lambda}$ be the character on $\C^\times$ given by $\theta_{k,  \lambda}(z)=(z/|z|)^k|z|^{2\lambda}$.
We denote the two-dimensional representation  $\Ind_{\C^\times}^{W_\R}(\theta_{k,  \lambda})$ of $W_\R$ as $\phi^{(2)}_{k,  \lambda}\in\Phi(W_\R)$.
For $k\in\{0,  1\}$ and $\lambda\in\C$,  let $\phi^{(1)}_{k,  \lambda}$ be the character on $W_\R$ given as $\phi^{(1)}_{k,  \lambda}(z)=|z|^{2\lambda}$ and $\phi^{(1)}_{k,  \lambda}(j)=(-1)^k$. 

\begin{itemize}
\item[(1)] The representation $\phi^{(2)}_{k,  \lambda}$ is reducible if and only if $k=0$.
When $k=0$,  $\phi^{(2)}_{0,  \lambda}=\phi^{(1)}_{0,  \lambda}\oplus\phi^{(1)}_{1,  \lambda}$.
\item[(2)] For $k,  k'\in\Z$ and $\lambda,  \lambda'\in\C$,  we have $\phi^{(2)}_{k,  \lambda}\cong\phi^{(2)}_{-k,  \lambda}$,  $\det\left(\phi_{k,  \lambda}^{(2)}\right)=\phi_{k+1,  2\lambda}^{(1)}$ and
    $\phi^{(2)}_{k,  \lambda}\otimes\phi^{(2)}_{k',  \lambda'}\cong
    \phi^{(2)}_{k+k',  \lambda+\lambda'}\oplus
    \phi^{(2)}_{k-k',  \lambda+\lambda'}$.
\item[(3)] Irreducible representations of $W_\R$ are of dimension one or two.
A one-dimensional representation $\phi$ is written uniquely as $\phi=\phi^{(1)}_{k,  \lambda}$ with $k\in\{0,  1\}$ and $\lambda\in\C$.
A two-dimensional irreducible representation $\phi$ is written uniquely as $\phi=\phi^{(2)}_{k,  \lambda}$ with $k\in\Z_{\geq1}$ and $\lambda\in\C$.
\item[(4)] The contragredient representations of $\phi^{(1)}_{k,\lambda}, \phi^{(2)}_{k,\lambda}$ are $\phi^{(1)}_{k,-\lambda}, \phi^{(2)}_{k,-\lambda}$, respectively.
\end{itemize}
\end{lemma}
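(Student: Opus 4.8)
# Proof Plan for Lemma \ref{lem:L-param}

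The plan is to prove all four parts by reducing everything to the basic structure of $W_\R$ as an extension of $\C^\times$ by $j$ together with standard properties of induction from the index-two subgroup $\C^\times$. I would first record the elementary computation of $\Ind_{\C^\times}^{W_\R}(\theta_{k,\lambda})$ in a concrete basis: choosing $\{1, j\}$ as coset representatives for $\C^\times \backslash W_\R$, the representation $\phi^{(2)}_{k,\lambda}$ is carried on $\C^2$, and for $z \in \C^\times$ it acts diagonally by $\mathrm{diag}(\theta_{k,\lambda}(z), \theta_{k,\lambda}(\bar z))$, while $j$ acts by $\bigl(\begin{smallmatrix} 0 & \theta_{k,\lambda}(j^2) \\ 1 & 0 \end{smallmatrix}\bigr) = \bigl(\begin{smallmatrix} 0 & \theta_{k,\lambda}(-1) \\ 1 & 0 \end{smallmatrix}\bigr)$. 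Since $\theta_{k,\lambda}(\bar z) = \overline{(z/|z|)}^k |z|^{2\lambda} = (z/|z|)^{-k}|z|^{2\lambda} = \theta_{-k,\lambda}(z)$, this already shows $\phi^{(2)}_{k,\lambda} \cong \phi^{(2)}_{-k,\lambda}$ (conjugate by the swap matrix) and gives the Mackey-type reducibility criterion: $\phi^{(2)}_{k,\lambda}$ is reducible iff $\theta_{k,\lambda}$ and its $j$-conjugate $\theta_{-k,\lambda}$ coincide, i.e. iff $k = 0$. When $k=0$ the two eigenlines of the $j$-action (spanned by $(1,1)$ and $(1,-1)$, with $j$-eigenvalues $\pm\theta_{0,\lambda}(-1) = \pm 1$) are $W_\R$-stable and give $\phi^{(1)}_{0,\lambda} \oplus \phi^{(1)}_{1,\lambda}$; this proves (1).

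For (2), the determinant is read off from the matrices above: $\det\phi^{(2)}_{k,\lambda}(z) = \theta_{k,\lambda}(z)\theta_{-k,\lambda}(z) = |z|^{4\lambda}$ and $\det\phi^{(2)}_{k,\lambda}(j) = -\theta_{k,\lambda}(-1) = -(-1)^k = (-1)^{k+1}$, which matches $\phi^{(1)}_{k+1, 2\lambda}$ by definition. The tensor product decomposition follows from the projection formula $\Ind(\theta_{k,\lambda}) \otimes \Ind(\theta_{k',\lambda'}) \cong \Ind(\theta_{k,\lambda} \otimes \mathrm{Res}\,\Ind(\theta_{k',\lambda'}))$, combined with $\mathrm{Res}_{\C^\times}\Ind_{\C^\times}^{W_\R}(\theta_{k',\lambda'}) = \theta_{k',\lambda'} \oplus \theta_{-k',\lambda'}$, giving $\Ind(\theta_{k+k',\lambda+\lambda'}) \oplus \Ind(\theta_{k-k',\lambda+\lambda'})$. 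Part (4) is immediate: dualizing $\theta_{k,\lambda}$ replaces $\lambda$ by $-\lambda$ (the $(z/|z|)^k$ factor is unitary, so its inverse is its conjugate, and $\Ind(\theta_{k,\lambda})^\vee \cong \Ind(\theta_{k,\lambda}^\vee) = \Ind(\theta_{-k,-\lambda}) \cong \Ind(\theta_{k,-\lambda})$), and the one-dimensional characters $\phi^{(1)}_{k,\lambda}$ have inverse $\phi^{(1)}_{k,-\lambda}$ since the value at $j$ is $\pm 1$.

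For (3), I would argue that any irreducible representation $\phi$ of $W_\R$, restricted to the abelian subgroup $\C^\times$, decomposes into characters; picking one such character $\theta$ occurring in $\phi|_{\C^\times}$, Frobenius reciprocity gives a nonzero map $\phi \to \Ind_{\C^\times}^{W_\R}(\theta)$, hence by irreducibility $\phi$ embeds in a two-dimensional representation, so $\dim\phi \le 2$. If $\dim\phi = 1$, then $\phi$ factors through $W_\R^{ab} \cong \R^\times$; writing $\R^\times \cong \{\pm 1\} \times \R_{>0}$ gives $\phi = \phi^{(1)}_{k,\lambda}$ with $k \in \{0,1\}$, $\lambda \in \C$, and uniqueness is clear. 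If $\dim\phi = 2$, then $\phi$ is irreducible so not a sum of characters of $W_\R$, hence $\phi|_{\C^\times}$ must consist of two characters $\theta, \theta'$ that are swapped by $j$-conjugation (otherwise each would be $W_\R$-stable); then $\phi \cong \Ind_{\C^\times}^{W_\R}(\theta)$, so $\phi = \phi^{(2)}_{k,\lambda}$, and by part (1) irreducibility forces $k \ne 0$, so after using $\phi^{(2)}_{k,\lambda} \cong \phi^{(2)}_{-k,\lambda}$ we may take $k \ge 1$; uniqueness follows by comparing the set of characters $\{\theta_{k,\lambda}, \theta_{-k,\lambda}\}$ appearing on restriction to $\C^\times$. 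I do not expect any genuine obstacle here — this is entirely standard (Knapp, Tate) — and the only mildly delicate point is bookkeeping the normalization $\theta_{k,\lambda}(z) = (z/|z|)^k|z|^{2\lambda}$ consistently with the reciprocity map $r_\R$ so that the identifications with characters of $\R^\times$ and the computation of $\det$ come out with the stated exponents.
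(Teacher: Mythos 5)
Your proof is correct, and since the paper states Lemma \ref{lem:L-param} without proof (it is quoted as standard background, with references to \cite{K94} and \cite{T79}), there is no internal argument to compare against; your Mackey-theoretic computation for the index-two subgroup $\C^\times\subset W_\R$ is precisely the standard route taken in those references, and all the bookkeeping ($\theta_{k,\lambda}(\bar z)=\theta_{-k,\lambda}(z)$, the matrix of $j$, the determinant, the projection formula, and the Frobenius-reciprocity bound $\dim\phi\le 2$) comes out with the stated normalizations. The only cosmetic point is that conjugation by the swap matrix alone identifies $\phi^{(2)}_{k,\lambda}$ with $\phi^{(2)}_{-k,\lambda}$ only after a further diagonal sign adjustment when $(-1)^k=-1$; it is cleaner to invoke the general fact that induction is insensitive to replacing $\theta_{k,\lambda}$ by its $j$-conjugate $z\mapsto\theta_{k,\lambda}(\bar z)=\theta_{-k,\lambda}(z)$, and similarly in part (3) the case of a $j$-fixed character $\theta_{0,\lambda}$ occurring in $\phi|_{\C^\times}$ is excluded by noting that then $\C^\times$ acts by scalars and any eigenvector of the (diagonalizable, since its square is $\theta_{0,\lambda}(-1)=1$) operator $\phi(j)$ spans an invariant line.
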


Next,  we recall the classification of irreducible SAF representations of $\GL_N(D)$.
Let $\Irr(\GL_N(D))$ be the set of equivalence classes of irreducible SAF representations of $\GL_N(D)$.

Suppose that $D=\R$.
For $k\in\{0,  1\}$ and $\lambda\in\C$,  let $\pi^{(1)}_{k,  \lambda}$ denote the character $\sgn^k|\cdot|^\lambda$ on $\GL_1(\R)$.
For $k\in\Z_{\geq1}$ and $\lambda\in\C$, 
we define $\pi^{(2)}_{k,\lambda}$ to be the irreducible SAF representation of $\GL_2(\R)$ characterized by the following properties:
\begin{itemize}
\item
the central character of $\pi^{(2)}_{k,\lambda}$ equals $\sgn^{k+1}|\cdot|^{2\lambda}$,
\item
the space $(\pi^{(2)}_{k,\lambda})_{\text{$\SO(2)$-fin}}$ of $\SO(2)$-finite vectors is isomorphic to $\bigoplus_{\epsilon\in\{\pm1\}}\bigoplus_{l\in\N}\xi_{\epsilon(k+1+2l)}$ as representations of $\SO(2)$ (see Section \ref{sec:intro-3} for the definition of $\xi_t$).
\end{itemize}

\begin{lemma}\label{lem:irrepR}
{\rm (Classification for $\Irr(\GL_N(\R))$)}
\begin{enumerate}
\item Representations $\pi^{(1)}_{k,  \lambda}$ and $\pi^{(2)}_{k,  \lambda}$ are essentially square integrable representations.
Any irreducible essentially square integrable representation $\pi$ of $\GL_N(\R)$ is uniquely written as $\pi=\pi^{(m)}_{k,  \lambda}$ with $m\in\{1,  2\}$.
\item For general $N$,  let $(n_1,  \ldots,  n_r)$ be a partition of $N$ consisting of 1 or 2 and $\pi_j=\pi^{(n_j)}_{k_j,  \lambda_j}$ be an irreducible essentially square integrable representation of $\GL_{n_j}(\R)$.
If $\pi_1,  \ldots,  \pi_r$ satisfy 
\begin{align}\label{eq:irrepR}
n_1^{-1}\re(\lambda_1) \geq n_2^{-1}\re(\lambda_2) \geq \cdots \geq n_r^{-1}\re(\lambda_r),
\end{align}
then $\pi_1\times\pi_2\times\cdots\times\pi_r$ has a unique irreducible quotient $\pi=\pi_1\boxplus\pi_2\boxplus\cdots\boxplus\pi_r$.
Any irreducible SAF representation of $\GL_N(\R)$ is uniquely written in this way up to permutations of $r$ indices preserving \eqref{eq:irrepR}.
\end{enumerate}
\end{lemma}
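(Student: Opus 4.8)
The plan is to deduce both parts from standard structure theory of $\GL_N(\R)$ together with the Langlands classification; throughout one passes freely between irreducible SAF representations and irreducible admissible (equivalently, irreducible Harish-Chandra) modules by the Casselman--Wallach theorem.

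For part (1) the crucial input is Harish-Chandra's criterion that a real reductive group has essentially square integrable representations only if it admits a Cartan subgroup compact modulo the center. For $\GL_N(\R)$ this forces $N\le 2$: the group $\GL_1(\R)=\R^\times$ is abelian, for $\GL_2(\R)$ the elliptic Cartan subgroup $\R^\times\cdot\SO(2)$ is compact modulo the center, and for $N\ge 3$ every Cartan subgroup has positive-dimensional split part modulo the center. Hence there are no essentially square integrable representations of $\GL_N(\R)$ for $N\ge 3$; for $N=1$ they are exactly the characters $\sgn^k|\cdot|^\lambda$, i.e.\ the $\pi^{(1)}_{k,\lambda}$. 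For $N=2$ I would realize $\pi^{(2)}_{k,\lambda}$ explicitly: after an unramified twist one may assume $\re(\lambda)=0$, and then $\pi^{(2)}_{k,0}$ is the unique irreducible submodule of a suitable reducible principal series of $\GL_2(\R)$ (reducible precisely because of the inducing character when $k\ge 1$), whose central character and $\SO(2)$-types are read off by a direct $(\fg_\C,K)$-module computation, matching the characterization in the statement; twisting by $|\det|^\lambda$ gives the general $\pi^{(2)}_{k,\lambda}$. That these exhaust the essentially square integrable representations of $\GL_2(\R)$ is Harish-Chandra's classification of the (relative) discrete series by central character and minimal $K$-type, and the uniqueness of $(m,k,\lambda)$ follows by comparing central characters, infinitesimal characters and $\SO(2)$-types.

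For part (2) I would invoke the Langlands classification for $\GL_N(\R)$: every irreducible admissible representation is the unique irreducible quotient of a standard module $\Ind_P^{\GL_N(\R)}(\tau\otimes e^{\nu})$ with $\tau$ irreducible tempered on a Levi $M$ of $P$ and $\nu$ strictly dominant for $P$, the triple $(P,\tau,\nu)$ being unique up to conjugacy. The $\GL$-specific refinement is that every irreducible tempered representation of $M=\prod_i\GL_{m_i}(\R)$ is a \emph{full} parabolic induction of a relative discrete series of a Levi all of whose blocks have size at most $2$, up to a purely imaginary twist; this rests on the triviality of the Knapp--Stein $R$-groups for $\GL_N(\R)$, which also forces such an induction to be irreducible, whence independent of the order of the inducing blocks. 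Plugging this into the standard module and using transitivity of induction rewrites it as $\pi_1\times\cdots\times\pi_r$ with each $\pi_i=\pi^{(n_i)}_{k_i,\lambda_i}$ essentially square integrable, where the inequalities \eqref{eq:irrepR} encode exactly the dominance of $\nu$ (equalities corresponding to constituents that merge into a single tempered block), the normalizing factor $n_i^{-1}$ appearing when one matches the exponent of an essentially square integrable $\GL_2(\R)$-block to the standard coordinates on the diagonal torus of $\GL_N(\R)$; the existence and irreducibility of the Langlands quotient $\pi=\pi_1\boxplus\cdots\boxplus\pi_r$ is then the Langlands quotient theorem.

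The uniqueness clause --- that $(n_i,\pi_i)$ is determined up to permutations preserving \eqref{eq:irrepR} --- is the step I expect to be the main obstacle, since this is exactly where $\GL$ is better behaved than a general reductive group. From uniqueness of $(P,\tau,\nu)$ up to conjugacy, $\nu$ pins down the partition into tempered blocks and the block exponents (dominance removing the Weyl ambiguity across blocks), while $\tau$, hence the multiset of essentially square integrable constituents inside each tempered block, is determined up to the Weyl group of the Levi, i.e.\ up to reorderings within a block --- and these are precisely the permutations preserving \eqref{eq:irrepR}. One still has to check that swapping two adjacent constituents with equal value of $n_i^{-1}\re(\lambda_i)$ leaves the isomorphism class unchanged, which follows from the irreducibility of $\pi_i\times\pi_{i+1}$ in this aligned case together with the standard intertwining operator exchanging the two factors; alternatively one bypasses the uniqueness argument entirely by quoting the multisegment (Zelevinsky-type) classification of $\Irr(\GL_N(\R))$, where the assertion is immediate. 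Apart from this, the only bookkeeping requiring care is the explicit normalization of $\pi^{(2)}_{k,\lambda}$ in part (1), which is routine once the discrete series of $\GL_2(\R)$ are in hand.
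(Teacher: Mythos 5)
The paper never proves this lemma: it is quoted as standard from the references given at the start of Section 2 (Knapp, Jacquet, Tate), so there is no internal argument to compare with. Your outline is the standard one those references supply --- Harish-Chandra's compact-modulo-center criterion plus the $\GL_2(\R)$ discrete series for part (1), and the Langlands classification together with triviality of Knapp--Stein $R$-groups (tempered $=$ irreducible full induction from square integrables on blocks of size $\le 2$) for part (2) --- and part (1) is fine as sketched.

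The genuine problem is exactly the step you dismiss as bookkeeping: the claim that the factor $n_i^{-1}$ in \eqref{eq:irrepR} ``appears when one matches the exponent of a $\GL_2(\R)$-block to the standard coordinates on the diagonal torus.'' With the paper's normalization, the central character of $\pi^{(2)}_{k,\lambda}$ is $\sgn^{k+1}|\cdot|^{2\lambda}$, i.e. $\pi^{(2)}_{k,\lambda}=D_k\otimes|\det|^{\lambda}$ with $D_k$ of unitary central character; restricting $|\det|^{\lambda}$ to the diagonal torus of the block gives $|a_1|^{\lambda}|a_2|^{\lambda}$, so the per-coordinate exponent is $\re(\lambda)$, not $\re(\lambda)/2$, and the dominance condition $\langle\re\nu,\alpha\rangle>0$ for the roots of $A$ in $N$ comes out as $\re(\lambda_1)\geq\cdots\geq\re(\lambda_r)$ with no $n_j^{-1}$. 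Carried out correctly, your own computation therefore does not produce \eqref{eq:irrepR} as printed; the $n_j^{-1}$ would be right only if $\lambda_j$ denoted the exponent of the central character (which for a $\GL_2$ block is $2\lambda_j$), so either that inequality carries a normalization slip or your matching of exponents is wrong, and you cannot leave this unexamined since it is the content of the statement. The same issue breaks your uniqueness step: two adjacent factors with equal $n_i^{-1}\re(\lambda_i)$ but different block sizes have different genuine Langlands exponents, so $\pi_i\times\pi_{i+1}$ need not be irreducible and the swapping/intertwining argument does not apply; it is valid precisely for the unweighted condition $\re(\lambda_i)=\re(\lambda_{i+1})$. Note also that the paper's later uses --- the shift $\re(\lambda_i)\mapsto\re(\lambda_i)-\re(\eta)$ in the proof of Corollary \ref{cor:dual}, and the quaternionic condition \eqref{eq:irrepH} --- are only compatible with the unweighted form, so the honest fix is to prove the lemma with $\re(\lambda_1)\geq\cdots\geq\re(\lambda_r)$ and flag the discrepancy, rather than to assert that the $n_i^{-1}$ falls out of the torus coordinates.
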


For an irreducible SAF representation $\pi\in\Irr(\GL_N(\R))$,  its $L$-parameter $\phi_\pi\in\Phi(W_\R)$ is defined as follows.
We write $\pi=\pi_1\boxplus\pi_2\boxplus\cdots\boxplus\pi_r$ with essentially square integrable representations $\pi_j=\pi^{(n_j)}_{k_j,  \lambda_j}\in\Irr(\GL_{n_j}(\R))$ and set $\phi_\pi=\bigoplus_{j=1}^r\phi^{(n_j)}_{k_j,  \lambda_j}$.
The induced representation $\widetilde{\pi}=\pi_1\times\pi_2\times\cdots\times\pi_r$ in Lemma \ref{lem:irrepR} (2) is called a \emph{standard module} above $\pi$.

Next we suppose $D=\H$.
For $k\in\Z_{\geq1}$ and $\lambda\in\C$,  there is a unique irreducible representation $\tau_{k,  \lambda}\in\Irr(\GL_1(\H))$ of dimension $k$ with central character $\sgn^{k+1}|\cdot|^{2\lambda}$.
This is an essentially square integrable representation and any irreducible essentially square integrable representation of $\GL_1(\H)$ is written uniquely in this way.

\begin{lemma}\label{lem:irrepH}
{\rm (Classification for $\Irr(\GL_N(\H))$)}
Let $\tau_1=\tau_{k_1,  \lambda_1},  \ldots,  \tau_N=\tau_{k_N,  \lambda_N}$ be irreducible representations of $\GL_1(\H)$.
If they satisfy 
\begin{align}\label{eq:irrepH}
\re(\lambda_1) \geq \re(\lambda_2) \geq \cdots \geq \re(\lambda_N),
\end{align}
then $\tau_1\times\tau_2\times\cdots\times\tau_N$ has a unique irreducible quotient 
$\tau=\tau_1\boxplus\tau_2\boxplus\cdots\boxplus\tau_N$.
Any irreducible SAF representation of $\GL_N(\H)$ is uniquely written in this way up to permutations of $N$ indices preserving \eqref{eq:irrepH}.
\end{lemma}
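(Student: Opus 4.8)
The plan is to deduce the statement from the Langlands classification for the real reductive group $\GL_N(\H)$, together with the fact that essentially square integrable representations occur only on the $\GL_1(\H)$-blocks of a Levi subgroup. First I would recall the Langlands quotient theorem in the shape relevant to general linear groups: conjugacy classes of parabolic subgroups $P\subset\GL_N(\H)$ correspond to ordered partitions $(n_1,\dots,n_r)$ of $N$, with Levi factor $L\cong\GL_{n_1}(\H)\times\cdots\times\GL_{n_r}(\H)$ whose split central component $A=(\R_{>0})^r$ (block scalars with positive real entries) identifies $\fa^\ast$ with $\R^r$ and makes the chamber of the unipotent radical of $P$ equal to $\{(t_1,\dots,t_r):t_1>\cdots>t_r\}$ up to scaling. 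The theorem then asserts that every irreducible SAF representation of $\GL_N(\H)$ is the unique irreducible quotient of a standard module $\Ind_P^{\GL_N(\H)}(\delta_1\boxtimes\cdots\boxtimes\delta_r)$, where each $\delta_i$ is an essentially square integrable representation of $\GL_{n_i}(\H)$ whose normalized central exponent is weakly decreasing in $i$, and that the tuple $(n_i,\delta_i)$ is determined by the representation up to permutations preserving this order; one passes between this packaging and the strictly dominant one (with the inducing datum unique up to conjugacy) in the usual way, and between SAF representations and Harish-Chandra modules through the Casselman--Wallach globalization.

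The second step is to identify which $\delta_i$ can occur. For $m\geq 2$, the group $\GL_m(\H)$ has no essentially square integrable irreducible representation: by Harish-Chandra's criterion this would force $\rank\bigl(\GL_m(\H)/Z\bigr)=\rank\bigl(K/(K\cap Z)\bigr)$ for a maximal compact subgroup $K$ and the center $Z$, but $K\cong\Sp(m)$ has rank $m$, $Z\cong\R^\times$ meets $K$ in $\{\pm1\}$, and a Cartan subgroup of $\GL_m(\H)$ has rank $2m$ (its complexification is a maximal torus of $\GL_{2m}(\C)$), so the equality would read $2m-1=m$, which fails for $m\geq 2$; for $m=1$ it holds, in agreement with the representations $\tau_{k,\lambda}$ already produced in the excerpt. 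Hence in a standard module above an irreducible SAF representation of $\GL_N(\H)$ every block has size one, so $r=N$, each $\delta_i=\tau_{k_i,\lambda_i}$, its central exponent is a fixed positive multiple of $\re\lambda_i$, and the dominance condition becomes exactly $\re\lambda_1\geq\cdots\geq\re\lambda_N$. Reading this off, $\tau_1\times\cdots\times\tau_N$ has the Langlands quotient $\tau_1\boxplus\cdots\boxplus\tau_N$ as its unique irreducible quotient, and the uniqueness up to order of the data is inherited from the Langlands classification.

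The step I expect to cost the most work is checking that a standard module $\tau_1\times\cdots\times\tau_N$ with only \emph{weakly} decreasing $\re\lambda_i$ still admits a \emph{unique} irreducible quotient: the Langlands quotient theorem in its cleanest form yields one irreducible quotient only when the exponent is strictly dominant, so at an equality $\re\lambda_i=\re\lambda_{i+1}$ one must know that $\tau_i\times\tau_{i+1}$ is either irreducible or has a unique irreducible quotient, and then argue by induction in stages. This is the $\GL_N(\H)$-analogue of the corresponding point in Lemma \ref{lem:irrepR}(2) and belongs to the Langlands classification for general linear groups over division algebras; alternatively, one can avoid reproving it by transporting both the classification and this uniqueness from $\GL_{2N}(\R)$ through the Archimedean Jacquet--Langlands correspondence, using Lemma \ref{lem:irrepR} together with the compatibility of that correspondence with parabolic induction.
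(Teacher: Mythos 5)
Your proposal is essentially correct, but there is nothing in the paper to compare it with: the paper does not prove Lemma \ref{lem:irrepH} (nor Lemma \ref{lem:irrepR}); both are recalled as standard facts from the literature cited at the start of Section \ref{sec:2} (Knapp's account of the Archimedean local Langlands correspondence, Jacquet, Tate). Your reconstruction follows the standard route those references take. The two load-bearing points are handled correctly: the Harish-Chandra equal-rank computation showing that $\GL_m(\H)$ has essentially square integrable representations only for $m=1$ is right (a Cartan subgroup has dimension $2m$, the maximal compact $\Sp(m)$ has rank $m$, the center is one-dimensional, so $2m-1=m$ forces $m=1$), and you correctly isolate the genuine subtlety, namely passing from the Langlands quotient theorem with strictly dominant tempered data to standard modules with essentially square integrable data and only weakly decreasing $\re(\lambda_i)$; the resolution is, as you say, irreducibility of unitary parabolic induction from (essentially) square integrable representations for $\GL_N(\H)$, plus induction in stages. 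One caution on your suggested alternative: transporting the statement from $\GL_{2N}(\R)$ through the Archimedean Jacquet--Langlands correspondence is at risk of circularity, since for representations that are not essentially square integrable that correspondence is usually \emph{defined} via compatibility with standard modules and Langlands quotients, i.e.\ via the very classification being proved; so the first route (tempered irreducibility plus the Langlands classification for $\GL_N(\H)$ itself) is the one to rely on.
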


For an irreducible representation $\tau\in\Irr(\GL_N(\H))$,  its $L$-parameter $\phi_\tau\in\Phi(W_\R)$ is defined as follows.
We write $\tau=\tau_1\boxplus\tau_2\boxplus\cdots\boxplus\tau_N$ with essentially square integrable representations $\tau_j=\tau_{k_j,  \lambda_j}\in\Irr(\GL_1(\H))$ and set $\phi_\tau=\bigoplus_{j=1}^N\phi^{(2)}_{k_j,  \lambda_j}$.
The induced representation $\widetilde{\tau}=\tau_1\times\tau_2\times\cdots\times\tau_N$ in Lemma \ref{lem:irrepH}  is called a \emph{standard module} above $\tau$.

We say that $\phi=\bigoplus_{j=1}^r\phi_j\in\Phi(W_\R)$ with $\phi_j\in\Irr(W_\R)$ is \emph{relevant to $\GL_N(D)$} if 
    \[
    \begin{cases}
    \dim\phi=N  & \text{when $D=\R$,} \\
    \dim\phi=2N, r=N \text{ and } \dim\phi_j=2 \text{ for all $j$} &
    \text{when $D=\H$.}
    \end{cases}
    \]
Let $\Phi(\GL_N(D))$ denote the set of $L$-parameters relevant to $\GL_N(D)$.
Now we can state the local Langlands correspondence for $\GL_N(D)$.

\begin{theorem}[Langlands correspondence for $\GL_N(D)$]\label{thm:LLC}
The map $\pi\mapsto\phi_\pi$ defines a bijection from $\Irr(\GL_N(D))$ to  $\Phi(\GL_N(D))$.
\end{theorem}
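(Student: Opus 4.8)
The plan is to deduce the theorem purely combinatorially from the three classification results already recorded: Lemma \ref{lem:L-param} on semisimple representations of $W_\R$, and Lemma \ref{lem:irrepR}, Lemma \ref{lem:irrepH} on $\Irr(\GL_N(\R))$ and $\Irr(\GL_N(\H))$. The substantive content — Langlands' Archimedean correspondence, Harish-Chandra's classification of discrete series, and the Jacquet--Langlands correspondence relating $\GL_1(\H)$ to $\GL_2(\R)$ (see \cite{K94}, \cite{J79}, \cite{T79}) — is exactly what those lemmas encode, so the remaining task is to check that the two parametrizations match block by block. I would first settle the essentially square integrable case. For $D=\R$: by Lemma \ref{lem:irrepR}(1), across all $N$ the essentially square integrable representations are precisely the characters $\pi^{(1)}_{k,\lambda}$ of $\GL_1(\R)$ with $k\in\{0,1\}$ and the representations $\pi^{(2)}_{k,\lambda}$ of $\GL_2(\R)$ with $k\geq1$, each written uniquely; by Lemma \ref{lem:L-param}(3), $\Irr(W_\R)$ consists precisely of the $\phi^{(1)}_{k,\lambda}$ with $k\in\{0,1\}$ and the $\phi^{(2)}_{k,\lambda}$ with $k\geq1$, again uniquely. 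Hence $\pi^{(1)}_{k,\lambda}\mapsto\phi^{(1)}_{k,\lambda}$, $\pi^{(2)}_{k,\lambda}\mapsto\phi^{(2)}_{k,\lambda}$ is a bijection from essentially square integrable representations of $\GL_1(\R)$ and $\GL_2(\R)$ onto $\Irr(W_\R)$, preserving the block size ($\dim$). For $D=\H$: the essentially square integrable representations are, across all $N$, exactly the $\tau_{k,\lambda}$ of $\GL_1(\H)$ with $k\geq1$, and $\tau_{k,\lambda}\mapsto\phi^{(2)}_{k,\lambda}$ is a bijection onto the set of two-dimensional irreducible representations of $W_\R$.

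Next I would assemble the general case from the Langlands classification. By Lemma \ref{lem:irrepR}(2), an element of $\Irr(\GL_N(\R))$ is the same datum as a finite multiset $\{\pi_1,\dots,\pi_r\}$ of essentially square integrable representations of $\GL_{n_1}(\R),\dots,\GL_{n_r}(\R)$ with $n_1+\dots+n_r=N$: the ordering condition \eqref{eq:irrepR} merely fixes an ordered representative, and commutativity of $\boxplus$ shows the unique irreducible quotient depends only on the multiset. Likewise, by Lemma \ref{lem:irrepH}, an element of $\Irr(\GL_N(\H))$ is a multiset of $N$ essentially square integrable representations of $\GL_1(\H)$. On the Galois side, by Lemma \ref{lem:L-param}(3) an element $\phi=\bigoplus_{j}\phi_j\in\Phi(W_\R)$ is the same datum as a finite multiset $\{\phi_1,\dots,\phi_r\}$ of irreducible $W_\R$-representations, and the relevance condition defining $\Phi(\GL_N(D))$ reads $\sum_j\dim\phi_j=N$ when $D=\R$, and $\dim\phi_j=2$ for all $j$ together with $r=N$ (equivalently $\sum_j\dim\phi_j=2N$) when $D=\H$. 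Applying the base-case bijection to each block carries multisets of essentially square integrable representations relevant to $\GL_N(D)$ onto multisets of irreducible $W_\R$-representations relevant to $\GL_N(D)$, and back; this is precisely $\pi\mapsto\phi_\pi$, which is well defined since $\phi_{\pi_1\boxplus\dots\boxplus\pi_r}:=\bigoplus_j\phi_{\pi_j}$ does not depend on the ordering, and it takes values in $\Phi(\GL_N(D))$ by the dimension count. Injectivity: a semisimple $W_\R$-representation determines the multiset of its irreducible constituents (Lemma \ref{lem:L-param}(3)), hence the multiset of the $\pi_j$ by the base-case bijection, hence $\pi$ by Lemma \ref{lem:irrepR}(2)/\ref{lem:irrepH}. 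Surjectivity: given $\phi=\bigoplus_j\phi_j\in\Phi(\GL_N(D))$, take the preimage $\pi_j$ of each $\phi_j$, reorder the $\pi_j$ so that \eqref{eq:irrepR} (resp. \eqref{eq:irrepH}) holds, and then $\pi=\pi_1\boxplus\dots\boxplus\pi_r$ satisfies $\phi_\pi=\phi$.

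I do not expect a genuine obstacle: with Lemmas \ref{lem:L-param}, \ref{lem:irrepR}, \ref{lem:irrepH} in hand the theorem is bookkeeping, and the real difficulty is hidden in those lemmas. The only points needing a moment's care are (i) matching the relevance condition for $D=\H$ — every essentially square integrable representation of $\GL_N(\H)$ sits on a $\GL_1(\H)$ block and has a two-dimensional $L$-parameter, so the number $r$ of blocks equals $N$, which is exactly the constraint built into $\Phi(\GL_N(\H))$; and (ii) confirming that \eqref{eq:irrepR} and \eqref{eq:irrepH} are only normalizations of the representative and pose no obstruction to surjectivity, since any finite multiset can be ordered with weakly decreasing $n_j^{-1}\re(\lambda_j)$, resp. $\re(\lambda_j)$. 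Neither point is hard.
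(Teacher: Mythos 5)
The paper does not actually prove Theorem \ref{thm:LLC}: it is recalled as a known result, with the reader referred to \cite{K94}, \cite{J79} and \cite{T79}, and the surrounding text (the definition of $\phi_\pi$ block by block from a standard module) is exactly the bookkeeping you carry out. Your argument is correct as a reduction of the theorem to Lemmas \ref{lem:L-param}, \ref{lem:irrepR} and \ref{lem:irrepH}: the base-case matching of essentially square integrable representations with $\Irr(W_\R)$ (respectively with two-dimensional irreducibles when $D=\H$), the passage to multisets, and the check of the relevance condition are all sound, and you rightly observe that the genuine content is concentrated in the quoted lemmas, which the paper likewise imports from the literature. The one point worth flagging is your appeal to ``commutativity of $\boxplus$'': the statement that two orderings of the same multiset, both satisfying \eqref{eq:irrepR} (resp. \eqref{eq:irrepH}), yield isomorphic Langlands quotients is needed for the multiset-to-representation map to be single-valued (and hence for injectivity of $\pi\mapsto\phi_\pi$), but it is not literally contained in the uniqueness clauses of Lemmas \ref{lem:irrepR} and \ref{lem:irrepH}, which only say that a given $\pi$ determines its data up to such permutations. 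This independence is a standard part of the Langlands classification (permutations preserving the ordering only exchange factors with equal exponents, for which the induced representations are isomorphic), so it is not a genuine gap, but if you want the argument self-contained you should either cite it or read it explicitly into the lemmas, as the paper implicitly does.
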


Let $\psi$ be a non-trivial additive character on $\R$ and take $a\in\R^\times$ so that $\psi(x)=\exp(2\pi\sqrt{-1}ax)$.
The $\varepsilon$-factor $\varepsilon(s,  \phi,  \psi)$ of $\phi\in\Phi(W_\R)$ is defined as follows.
When $\phi\in\Irr(W_\R)$,  set
    \[
    \varepsilon(s,  \phi,  \psi)=
        \begin{cases}
        (\sgn(a)\sqrt{-1})^k|a|^{\lambda+s-\frac12}
        & \text{if $\phi=\phi^{(1)}_{k,  \lambda}$ with $k\in\{1,  0\}$ and 
        $\lambda\in\C$} \\
        (\sgn(a)\sqrt{-1})^{k+1}|a|^{2(\lambda+s)-1}
       & \text{if $\phi=\phi^{(2)}_{k,  \lambda}$ with $k\in\Z_{\geq1}$ and 
        $\lambda\in\C$}
        \end{cases}
    \]
For general $\phi$,  let $\phi=\bigoplus_{j=1}^r\phi_j$ be the irreducible decomposition and set $\varepsilon(s,  \phi,  \psi)=\prod_{j=1}^r\varepsilon(s,  \phi_j,  \psi)$.
Assume that $\phi$ is a self-dual representation and $\det(\phi)$ is the trivial character.
Then $\varepsilon(\tfrac12,  \phi,  \psi)$ is independent of the choice of $\psi$ and satisfies $\varepsilon(\tfrac12,  \phi,  \psi)^2=1$ \cite[Proposition 5.1]{GGP}.
The value $\varepsilon(\tfrac12,  \phi,  \psi)$ is denoted as $\varepsilon(\phi)$.
It is called the \emph{root number} of $\phi$.

\begin{remark}
Let $\pi$, $\phi_\pi$ and $\chi$ be as in Theorem \ref{thm:main}.
If the condition \ref{ind:main1} is satisfied,  then the representation $\phi=\phi_\pi\otimes\Ind_{\C^\times}^{W_\R}(\chi^{-1})$ is self-dual and $\det(\phi)$ is the trivial character.
Thus the notation $\varepsilon\left(\phi_\pi\otimes\Ind_{\C^\times}^{W_\R}(\chi^{-1})\right)$ in the condition \ref{ind:main2} is justified.
\end{remark}

\subsection{Reduction to Theorem \ref{thm:key-intro}}\label{sec:2-2}
Retain notation in Section \ref{sec:intro-1}.
We show that Theorem \ref{thm:key-intro} implies Theorem \ref{thm:main}.

\begin{proof}[Proof of Theorem \ref{thm:main} assuming Theorem \ref{thm:key-intro}]
Let $\pi$ and $\chi$ be as in Theorem \ref{thm:main}.
Write $\chi=\theta_{k',  \lambda'}$ with $k'\in\Z$ and $\lambda'\in\C$ (see Lemma \ref{lem:L-param}).

First we consider the case where $\pi$ is an essentially square integrable representation.
If this is the case,  $n=1$ and the $L$-parameter $\phi_\pi$ is of the form $\phi_\pi=\phi_{k,  \lambda}^{(2)}$ with $k\in\Z_{\geq1}$ and $\lambda\in\C$.
Since $\phi_\pi$ takes values in $\GL_2(\C)=\GSp_2(\C)$ and its similitude factor equals $\det\left(\phi_\pi\right)=\phi_{k+1,  2\lambda}^{(1)}$,  the condition \ref{ind:main1} is equivalent to $\phi_{k+1,  2\lambda}^{(1)}\circ r_\R=\chi|_{\R^\times}$.
The left hand side is $\sgn^{k+1}|\cdot|^{2\lambda}$ and the right hand side is $\sgn^{k'}|\cdot|^{2\lambda'}$.
Hence the condition \ref{ind:main1} in this case is equivalent to $\lambda=\lambda'$ and $k-k'$ is odd.

Let $D=\R$ and $\pi=\pi^{(2)}_{k,\lambda}$. We may regard $H=\GL_1(\C)$ as the subgroup of $G=\GL_2(\R)$ generated by the central elements $\diag(a,a)$  $(a>0)$ and $\SO(2)$. Since the projection from $\pi$ to an $\SO(2)$-isotypic component is continuous, $\pi$ is $\chi_H$-distinguished if and only if $|a|^{2\lambda}=|a|^{2\lambda'}$ for any $a>0$ and the $\SO(2)$-type of $\pi$, which we saw before Lemma \ref{lem:irrepR}, contains $\xi_{k'}$. 
Hence $\pi$ is $\chi_H$-distinguished if and only if $\lambda=\lambda'$,  $|k'|> k$ and $k-k'$ is odd.

Let $D=\H$ and $\pi=\tau_{k,  \lambda}$. Write $\R_{>0}$ for the center of $G=\GL_1(\H)$. Then $H\cong\R_{>0}\times\SO(2)$ under an isomorphism $G\cong \R_{>0}\times\SU(2)$. The $k$-dimensional irreducible representation of $\SU(2)$ decomposes into $\bigoplus_{l}\xi_l$ as a representation of $\SO(2)$, where $l$ runs over integers satisfying $|l|\le k-1$ and $k-1-l\in 2\Z$.
Therefore $\pi$ is $\chi_H$-distinguished if and only if $\lambda=\lambda'$,  $|k'|< k$ and $k-k'$ is odd.

On the other hand,  it is easy to check that for $\psi(x)=\exp(2\pi\sqrt{-1}ax)$,
    \[
    \varepsilon(\tfrac12,  \phi_\pi\otimes\Ind_{\C^\times}^{W_\R}(\chi^{-1}),  \psi)=
        \begin{cases}
        (-1)^{k'+1}|a|^{4(\lambda-\lambda')} & \text{if $|k'|>k$}, \\
        (-1)^{k+1}|a|^{4(\lambda-\lambda')} & \text{if $|k'|<k$}
        \end{cases}
    \]
by Lemma \ref{lem:L-param} (2).
Therefore,  $\pi$ is $\chi_H$-distinguished if and only if the conditions \ref{ind:main1} and \ref{ind:main2} hold.

Next,  we treat the general case.
Suppose that $\pi$ is $\chi_H$-distinguished.
Let $\widetilde{\pi}=\pi_1\times\pi_2\times\cdots\times\pi_r$ be a standard module above $\pi$ and $\varsigma\in\fS_r$ the involution in Theorem \ref{thm:key-intro}.
Note that the condition $\pi_{\varsigma(i)}\cong \pi_i^\vee\cdot\chi_{\GL_{n_i}(D)}$ is equivalent to $\phi_{\pi_{\varsigma(i)}}=\phi_{\pi_i}^\vee\cdot\chi|_{\R^\times}$.
Hence the condition \ref{ind:main1} for the representation $\pi$ follows from that for essentially square integrable representations and the properties of the involution $\varsigma$.
Since $\phi_\pi=\bigoplus_{i=1}^r\phi_{\pi_i}$,  we have
    \[
    \varepsilon\left(\phi_\pi\otimes\Ind_{\C^\times}^{W_\R}(\chi^{-1})\right)=
    \prod_{i=1}^r \varepsilon\left(\tfrac12,  
    \phi_{\pi_i}\otimes\Ind_{\C^\times}^{W_\R}(\chi^{-1}),  \psi\right)
    \]
If $\varsigma(i)=i$,  then $\varepsilon\left(\tfrac12,  \phi_{\pi_i}\otimes\Ind_{\C^\times}^{W_\R}(\chi^{-1}),  \psi\right)=\varepsilon(D)\chi(-1)$ as we have seen above.
If $\varsigma(i)\neq i$,  then 
    \begin{align*}
    \phi_{\pi_{\varsigma(i)}}\otimes\Ind_{\C^\times}^{W_\R}(\chi^{-1})
    =\phi_{\pi_i}^\vee\cdot(\chi|_{\R^\times})
    \otimes\Ind_{\C^\times}^{W_\R}(\chi^{-1})
    =\left(\phi_{\pi_i}\otimes\Ind_{\C^\times}^{W_\R}(\chi^{-1})\right)^\vee
    \end{align*}
since $(\chi|_{\R^\times})\otimes\Ind_{\C^\times}^{W_\R}(\chi^{-1})\cong\Ind_{\C^\times}^{W_\R}(\chi)\cong\left(\Ind_{\C^\times}^{W_\R}(\chi^{-1})\right)^\vee$.
We get from \cite[(3.4.7)]{T79} (see also \cite[p.14]{GGP}),  
    \begin{gather*}
    \varepsilon\left(\tfrac12,  
    \phi_{\pi_{\varsigma(i)}}\otimes\Ind_{\C^\times}^{W_\R}(\chi^{-1}),  \psi\right)
    \varepsilon\left(\tfrac12,  
    \phi_{\pi_i}\otimes\Ind_{\C^\times}^{W_\R}(\chi^{-1}),  \psi\right)
    =\det\left(\phi_{\pi_i}\otimes\Ind_{\C^\times}^{W_\R}(\chi^{-1})\right)
    \circ r_\R(-1) \\
    =\begin{cases}
         \det\left(\Ind_{\C^\times}^{W_\R}(\chi^{-1})\right)\circ r_\R(-1)
         =\phi^{(1)}_{-k'+1,  -2\lambda'}(j)=-\chi(-1)
         & \text{if $\dim\phi_{\pi_i}=1$,} \\
         1 & \text{if $\dim\phi_{\pi_i}=2$.}
         \end{cases}
    \end{gather*}
Set 
    \begin{gather*}
    a=\frac12\cdot\#\{i \mid \varsigma(i)\neq i,  \quad\dim\phi_{\pi_i}=1\},  \quad
    b=\frac12\cdot\#\{i \mid \varsigma(i)\neq i,  \quad \dim\phi_{\pi_i}=2\},  \\
    c=\#\{i \mid \varsigma(i)=i
    \}.
    \end{gather*}
Then $a+2b+c=n$ and
    \[
    \varepsilon\left(\phi_\pi\otimes\Ind_{\C^\times}^{W_\R}(\chi^{-1})\right)
    =(\varepsilon(D)\chi(-1))^{a+c}=(\varepsilon(D)\chi(-1))^n.
    \]
\end{proof}

\section{Preliminaries on homology}
\label{sec:3}

\subsection{Schwartz homology}
Let $G$ be an almost linear Nash group and $\pi$ an SF representation of $G$.
Recall that a finite cover of an open subgroup of a linear algebraic group defined over $\R$ has a natural structure of an almost linear Nash group. 
In this section, we recall basic properties of the Schwartz homology $H_*(G,\pi)$. See \cite{CS21} for the precise definition and proofs. 

The Schwartz homology groups $H_*(G,\pi)$ are (not necessarily Hausdorff) linear topological spaces, and there is a natural identification \[H_0(G,\pi)=\pi_G:=\pi/\sum_{g\in G}(g-1)\pi,\] 
where $\pi_G$ is equipped with the quotient topology of $\pi$.

Let $K$ be a maximal compact subgroup of $G$. 
The space $\pi_{\Kfin}$ of $K$-finite vectors in $\pi$ has a natural  structure of $(\fg,K)$-module, and is equipped with the relative topology of $\pi$. 
Consider the complex
\[\cdots\to(\wedge^{l+1}(\fg/\fk)\otimes\pi_{\Kfin})_K
\xrightarrow{\partial_{l+1}}(\wedge^{l}(\fg/\fk)\otimes\pi_{\Kfin})_K
\xrightarrow{\partial_l}(\wedge^{l-1}(\fg/\fk)\otimes\pi_{\Kfin})_K\to\cdots\]
which gives the relative Lie algebra homology groups $H_{\ast}(\fg,K;\pi_{\Kfin})$. 
We equip 
$(\wedge^{l}(\fg/\fk)\otimes\pi_{\Kfin})_K$ with the quotient topology of $\wedge^{l}(\fg/\fk)\otimes\pi_{\Kfin}$, 
$\Ker (\partial_l)$ with the relative topology of $(\wedge^{l}(\fg/\fk)\otimes\pi_{\Kfin})_K$, and 
$H_l(\fg,K;\pi_{\Kfin})$ with the quotient topology of $\Ker(\partial_l)$.

\begin{lemma}[{\cite[Theorem 7.7]{CS21}}]\label{lem:comparison}
There exists a natural isomorphism
$H_*(G,\pi)\cong H_*(\fg,K;\pi_{\Kfin})$ of linear topological spaces. 
\end{lemma}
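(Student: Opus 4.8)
\textbf{Plan for the proof of Lemma \ref{lem:comparison}.}
The plan is to exhibit both sides as the homology of the complex computing the Lie algebra homology and to track topologies throughout. The starting point is the general machinery of \cite{CS21}: the Schwartz homology $H_*(G,\pi)$ is the value on $\pi$ of the left derived functor of the coinvariants functor $\pi\mapsto\pi_G$ on the category of SF representations, and this derived functor can be computed by any suitable resolution. First I would recall (or set up) the standard relative Lie algebra chain complex as a resolution-type object: the key is that for an SF representation, the functor $\pi\mapsto(\wedge^{\bullet}(\fg/\fk)\otimes\pi_{\Kfin})_K$ with the Koszul-type differentials $\partial_\bullt$ computes $H_*(\fg,K;\pi_{\Kfin})$ by definition, and one wants to identify this with the derived coinvariants. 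The cleanest route is to observe that, as abstract (non-topological) vector spaces, the comparison isomorphism $H_*(G,\pi)\cong H_*(\fg,K;\pi_{\Kfin})$ is already essentially the content of the Hochschild--Mostow / Casselman--Wallach comparison between group homology and $(\fg,K)$-homology on the level of $K$-finite vectors, combined with the fact — proved in \cite{CS21} — that passing to $K$-finite vectors does not change Schwartz homology; so the real work is to upgrade this to an isomorphism of linear topological spaces with the topologies specified in the excerpt.

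The key steps, in order, would be: (i) Construct an explicit chain map between a standard complex computing $H_*(G,\pi)$ in the Schwartz-homology framework (e.g. a bar-type resolution by Schwartz induced modules, as in \cite{CS21}) and the relative Lie algebra complex $(\wedge^{\bullet}(\fg/\fk)\otimes\pi_{\Kfin})_K$; the natural map comes from differentiating cochains / integrating against the group, and on $K$-finite vectors it is the classical quasi-isomorphism. (ii) Show this chain map is continuous with respect to the stated topologies: $(\wedge^l(\fg/\fk)\otimes\pi_{\Kfin})_K$ carries the quotient topology from $\wedge^l(\fg/\fk)\otimes\pi_{\Kfin}$, and one checks the boundary maps $\partial_l$ and the comparison maps are continuous linear maps of these topologized spaces. (iii) Invoke the homological algebra of \cite{CS21} — in particular that Schwartz homology is computed by finite-length resolutions in good cases and that the relevant spectral sequence / long exact sequence arguments are compatible with the linear topologies — to conclude the induced maps on homology are topological isomorphisms. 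Throughout, one uses that $\pi$ is a nuclear Fréchet space, $\pi_{\Kfin}$ is dense, and $\wedge^\bullet(\fg/\fk)$ is finite-dimensional, so the tensor products and quotients behave well topologically.

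The main obstacle I expect is the topological bookkeeping, not the algebra. At the level of underlying vector spaces the isomorphism is classical (Wigner's lemma / Casselman--Osborne type results identifying $H_*(\fg,K;-)$ with continuous cohomology on Casselman--Wallach modules), and on $K$-finite vectors the Schwartz homology reduces to $(\fg,K)$-homology by \cite{CS21}. The delicate point is that the homology spaces in question need \emph{not} be Hausdorff, so one cannot simply argue via dense subspaces and completions; one must show that the comparison map is open, i.e. that the quotient topologies on $\Ker(\partial_l)/\Image(\partial_{l+1})$ on the two sides genuinely agree, which requires knowing that $\Image(\partial_{l+1})$ has the same closure-behaviour on both sides. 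This is where I would lean hardest on the functorial formalism of \cite{CS21} — expressing the comparison as an isomorphism of derived functors in a well-behaved category of topological vector spaces — rather than on hands-on estimates, so that continuity and openness come for free from the construction of the derived functor by a specific resolution. If a direct argument is needed, the tool is the open mapping theorem for complete nuclear spaces applied carefully at each step of the chain complex, exploiting finite-dimensionality of $\wedge^\bullet(\fg/\fk)$ to stay within the class of spaces for which it applies.
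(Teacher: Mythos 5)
Your plan essentially sets out to re-prove the cited result rather than to use it, and the paper's own derivation is much shorter. The paper quotes \cite[Theorem 7.7]{CS21} as already giving a natural isomorphism of \emph{linear topological spaces} between $H_*(G,\pi)$ and the homology of the complex whose $l$-th term is $(\wedge^l(\fg/\fk)\otimes\pi)_K$ --- note: the full smooth space $\pi$, not $\pi_{\Kfin}$. The only remaining step, which you attribute to \cite{CS21} (``passing to $K$-finite vectors does not change Schwartz homology'') but which the paper proves itself, is Lemma \ref{lem:inv=coinv}: for any SF representation $\Pi$ of the compact group $K$, the coinvariants $\Pi_K$ and $(\Pi_{\Kfin})_K$ are both naturally and topologically identified with the invariants $\Pi^K$ (proved in the Appendix via the projection $P_{\triv}$ and a Casimir argument). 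Applying this with $\Pi=\wedge^l(\fg/\fk)\otimes\pi$ gives a term-wise topological isomorphism $(\wedge^l(\fg/\fk)\otimes\pi)_K\cong(\wedge^l(\fg/\fk)\otimes\pi_{\Kfin})_K$ compatible with the differentials, and a term-wise isomorphism of topological complexes induces a topological isomorphism on homology for trivial reasons; no openness argument is needed.

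The genuine gap in your proposal is step (iii). You compare the two complexes only through a chain map which is a quasi-isomorphism (the classical comparison on $K$-finite vectors) and then claim the induced map on homology is a homeomorphism either ``for free from the derived-functor formalism'' or by ``the open mapping theorem for complete nuclear spaces.'' Neither works: the homology spaces here are quotients of closed subspaces by possibly non-closed images, hence possibly non-Hausdorff and non-complete, so the open mapping theorem does not apply, and in the (non-abelian) setting of complexes of topological vector spaces a quasi-isomorphism does not in general induce a topological isomorphism on homology --- one needs either a continuous chain homotopy inverse (which is precisely the content of the proof of \cite[Theorem 7.7]{CS21} that you would be forced to reconstruct) or, as in the paper, a degreewise isomorphism of the complexes themselves. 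You correctly flag the non-Hausdorff issue as the delicate point, but the remedies you propose do not resolve it; the paper's route sidesteps it entirely.
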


\begin{remark}
More precisely, it is proved in \cite[Theorem 7.7]{CS21} that 
there exists a natural isomorphism between $H_*(G,\pi)$ and the homology of the complex 
\begin{align}\label{eq:standard}
\cdots\to(\wedge^{l+1}(\fg/\fk)\otimes\pi)_K
\to(\wedge^{l}(\fg/\fk)\otimes\pi)_K
\to(\wedge^{l-1}(\fg/\fk)\otimes\pi)_K\to\cdots
\end{align}
as linear topological spaces. 
By Lemma \ref{lem:inv=coinv}, we have
\[(\wedge^{l}(\fg/\fk)\otimes\pi)_K
=((\wedge^{l}(\fg/\fk)\otimes\pi)_{\Kfin})_K
=(\wedge^{l}(\fg/\fk)\otimes\pi_{\Kfin})_K\] and the assertion of Lemma \ref{lem:comparison} follows. 
\end{remark}

\begin{lemma}\label{lem:inv=coinv}
For any SF representation $\Pi$ of $K$, the linear topological spaces $\Pi_K$, $(\Pi_{\Kfin})_K$ are naturally isomorphic to the closed subspace $\Pi^K$ consisting of $K$-invariant elements.
\end{lemma}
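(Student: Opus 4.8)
The plan is to exploit the averaging operator over the compact group $K$. Since $K$ is compact and $\Pi$ is a smooth Fréchet representation, integration against the normalized Haar measure gives a continuous projection $e\colon\Pi\to\Pi$, $e(v)=\int_K \pi(k)v\,dk$, whose image is exactly $\Pi^K$ and which is the identity on $\Pi^K$. The key observation is that $e$ annihilates $(k-1)v$ for every $k\in K$ and $v\in\Pi$: indeed $e(\pi(k)v)=e(v)$ by translation-invariance of Haar measure. Conversely, for any $v\in\Pi$ one has $v-e(v)\in\overline{\sum_{k\in K}(k-1)\Pi}$; this follows because $e(v)$ lies in the closed convex hull of the $K$-orbit of $v$ (it is a vector-valued integral, hence a limit of convex combinations $\sum a_i\pi(k_i)v$), and each such convex combination differs from $v$ by an element of $\sum_{k}(k-1)\Pi$. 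Therefore the closed subspace $\overline{\sum_{k\in K}(k-1)\Pi}$ equals $\Ker e$, and $\Pi$ decomposes topologically as $\Pi=\Pi^K\oplus\Ker e$ with $e$ the projection onto the first factor.

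First I would record that the submodule $\sum_{k\in K}(k-1)\Pi$ is already closed, so that $\Pi_K$ is Hausdorff. One way: $\sum_{k\in K}(k-1)\Pi\subseteq\Ker e$ by the computation above, and $\Ker e$ is closed; for the reverse inclusion one uses that $v\in\Ker e$ implies $v=v-e(v)$ is a limit of elements of $\sum_k(k-1)\Pi$, but in fact smoothness lets us do better — writing $v-e(v)=\int_K(\pi(k)v-v)\,dk$ as a convergent integral and using that the integrand takes values in the linear span of $(k-1)\Pi$, together with the fact that for a smooth Fréchet $K$-representation this integral already lies in the (closed, by Dixmier–Malliavin type arguments, or simply by compactness of $K$) span, one gets $v-e(v)\in\sum_k(k-1)\Pi$. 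Thus the natural continuous surjection $\Pi\to\Pi_K$ restricts to a topological isomorphism $\Pi^K\xrightarrow{\sim}\Pi_K$, its inverse being induced by $e$, which is continuous. This settles $\Pi_K\cong\Pi^K$.

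Next I would treat $(\Pi_{\Kfin})_K$. The inclusion $\Pi_{\Kfin}\hookrightarrow\Pi$ is continuous for the relative topology and its image is dense; moreover $e$ maps $\Pi_{\Kfin}$ into itself (the average of a $K$-finite vector is $K$-invariant, hence trivially $K$-finite, and in fact $\Pi^K\subseteq\Pi_{\Kfin}$). So the same averaging argument applies verbatim inside $\Pi_{\Kfin}$: $e|_{\Pi_{\Kfin}}$ is a continuous projection onto $\Pi^K=(\Pi_{\Kfin})^K$ with kernel the closure (in $\Pi_{\Kfin}$) of $\sum_{k\in K}(k-1)\Pi_{\Kfin}$, which again coincides with that span. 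Hence $(\Pi_{\Kfin})_K\cong(\Pi_{\Kfin})^K=\Pi^K$, and all three spaces are canonically identified via $e$.

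The main obstacle I anticipate is purely topological rather than algebraic: verifying that $\sum_{k\in K}(k-1)\Pi$ is genuinely closed (not merely that its closure is $\Ker e$), so that the quotient topology on $\Pi_K$ agrees with the subspace topology on $\Pi^K$ under the map induced by $e$. For a Fréchet space this is where one must be careful — the vector-valued integral defining $e(v)$ must be approximated by Riemann sums lying in $\sum_k(k-1)\Pi+\C v$, and one needs the convergence to take place in $\Pi$ with $v-e(v)$ actually attained, not just approximated, inside the algebraic sum. This is a standard fact for smooth representations of compact groups (it is essentially the statement that $\Pi=\Pi^K\oplus\overline{\Pi_0}$ with $\Pi_0$ the sum of nontrivial isotypic components, and that $\Pi_0$ is closed being the kernel of the continuous projector $e$), so once $e$ is shown continuous — which is immediate from smoothness and compactness of $K$ — the rest is formal. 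I would cite the continuity of $e$ and the direct sum decomposition as the crux, and deduce everything else from the universal property of the quotient.
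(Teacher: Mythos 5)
Your overall strategy (average over $K$ to get a continuous projection $e=P_{\triv}$ onto $\Pi^K$, then identify $\Pi_K$ with $\Pi^K$ via $e$) is the same as the paper's, but the crucial step is exactly the one you leave unproved, and the justification you sketch for it does not work. The Riemann-sum/convex-combination argument only shows $v-e(v)\in\overline{\sum_{k\in K}(k-1)\Pi}$. That is not enough: if the algebraic sum $S:=\sum_{k\in K}(k-1)\Pi$ were strictly smaller than its closure $\Ker e$, the induced map $\Pi_K=\Pi/S\to\Pi^K$ would have kernel $\overline{S}/S$ and the lemma would fail, so the whole content of the statement is precisely that $v-e(v)$ lies in $S$ itself. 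Your proposed reasons --- that the integrand $\pi(k)v-v$ takes values in $S$ and hence the integral lies in $S$ ``by Dixmier--Malliavin type arguments, or simply by compactness of $K$'' --- are not a proof: an integral of a function valued in a non-closed subspace in general only lands in its closure, compactness of $K$ by itself gives nothing here, and ``Dixmier--Malliavin type'' input has to be invoked in a precise form and then combined with further work; it is circular to assume the span is closed, since that is equivalent to what is being proved.

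For comparison, the paper closes this gap as follows. It quotes Chen--Sun's Theorem 1.7 (Lemma \ref{lem:coinv}), a Dixmier--Malliavin-type identity giving in particular the algebraic inclusion $\fk\,\Pi\subseteq\sum_{k\in K}(k-1)\Pi$. Then, using absolute convergence of the $K$-isotypic expansion $v=\sum_{\delta\in\hat K}P_\delta v$ (Warner), it splits $v-P_{\triv}v$ into two pieces: the finitely many nontrivial characters of the component group $\Gamma=K/K_0$, which are killed in the coinvariants by a finite averaging identity $x=(\#\Gamma)^{-1}\sum_{\gamma\in\Gamma}(1-\gamma)x$; and the remaining isotypic components, on which the Casimir element $\Omega$ of $\fk$ acts by positive scalars $c(\delta)$ bounded away from $0$, so that $u:=\sum c(\delta)^{-1}P_\delta v$ converges in $\Pi$ (or in $\Pi_{\Kfin}$, where the sum is finite) and the piece in question equals $\Omega u\in\fk_{\C}\Pi\subseteq S$. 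Some argument of this kind --- producing an actual preimage under the $\fk$-action, not just an approximation --- is what your proposal is missing; without it the claim that $S$ is closed, and hence the isomorphisms $\Pi_K\cong(\Pi_{\Kfin})_K\cong\Pi^K$, are not established.
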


Lemma \ref{lem:inv=coinv} is well-known, but we write the proof in Appendix for the convenience of the reader.

Remark that $\pi\mapsto\pi_{\Kfin}$ is an exact functor from the category of SF representations of $G$ to the category of $(\fg,K)$-modules (without topology). From long exact sequences for relative Lie algebra homology, Lemma \ref{lem:comparison} gives the following
\begin{lemma}[{\cite[Corollary 7.8]{CS21}}]\label{lem:long exact}
For an exact sequence (which is not necessarily strongly exact)
\begin{align*}
0\to\pi_1\to\pi_2\to\pi_3\to 0
\end{align*}
of SF representations of $G$, 
we have the following long exact sequence of vector spaces: 
\begin{align*}
\cdots\to H_l(G,\pi_1)\to H_l(G,\pi_2)\to H_l(G,\pi_3)\to H_{l-1}(G,\pi_1)\to\cdots,
\end{align*}
where $H_l(G,\pi_1)\to H_l(G,\pi_2)$ and $H_l(G,\pi_2)\to H_l(G,\pi_3)$ are continuous. 
\end{lemma}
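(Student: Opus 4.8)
The plan is to reduce everything to relative Lie algebra homology via Lemma~\ref{lem:comparison} and then invoke the standard long exact sequence of homological algebra. First I would apply the functor $\pi\mapsto\pi_{\Kfin}$ to the given exact sequence. By the exactness of this functor (noted in the Remark above, and ultimately a consequence of the continuity of the $K$-isotypic projectors on SF representations of the compact group $K$), I obtain a short exact sequence of $(\fg,K)$-modules
\begin{equation*}
0\to(\pi_1)_{\Kfin}\to(\pi_2)_{\Kfin}\to(\pi_3)_{\Kfin}\to0.
\end{equation*}
Only exactness as a sequence of abstract $G$-modules is used here; strong exactness plays no role, since surjectivity of $(\pi_2)_{\Kfin}\to(\pi_3)_{\Kfin}$ is checked by lifting a $K$-finite vector of $\pi_3$ to $\pi_2$ and applying the continuous projector onto the finitely many relevant $K$-types.

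Next I would use that the Chevalley--Eilenberg functor $V\mapsto C_\bullet(V):=(\wedge^\bullet(\fg/\fk)\otimes V)_K$, from $(\fg,K)$-modules to complexes of vector spaces, is exact: tensoring with the finite-dimensional space $\wedge^l(\fg/\fk)$ is exact, and the $K$-coinvariants functor is exact on $K$-locally finite modules because every such module is the direct sum of its $K$-isotypic components. Hence $C_\bullet$ carries the above short exact sequence to a short exact sequence of complexes of vector spaces, and the associated long exact sequence in homology reads
\begin{equation*}
\cdots\to H_l(\fg,K;(\pi_1)_{\Kfin})\to H_l(\fg,K;(\pi_2)_{\Kfin})\to H_l(\fg,K;(\pi_3)_{\Kfin})\to H_{l-1}(\fg,K;(\pi_1)_{\Kfin})\to\cdots.
\end{equation*}
Transporting this along the natural isomorphism of Lemma~\ref{lem:comparison} yields the asserted long exact sequence of vector spaces $\cdots\to H_l(G,\pi_1)\to H_l(G,\pi_2)\to H_l(G,\pi_3)\to H_{l-1}(G,\pi_1)\to\cdots$.

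It remains to verify that $H_l(G,\pi_1)\to H_l(G,\pi_2)$ and $H_l(G,\pi_2)\to H_l(G,\pi_3)$ are continuous. By construction these are the maps induced on homology by the continuous $G$-equivariant maps $\pi_1\to\pi_2$ and $\pi_2\to\pi_3$: the latter restrict to continuous maps on the spaces of $K$-finite vectors with the relative topology, induce continuous maps on each $\wedge^l(\fg/\fk)\otimes(\pi_i)_{\Kfin}$, descend to continuous maps on the quotients $C_l((\pi_i)_{\Kfin})$ and on the subspaces $\Ker\partial_l$, and finally to continuous maps on $H_l(\fg,K;(\pi_i)_{\Kfin})$ with the quotient topology. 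Since the comparison map of Lemma~\ref{lem:comparison} is an isomorphism of linear topological spaces and is natural in $\pi$, these match the induced maps on $H_l(G,\pi_i)$, which are therefore continuous. The one point I expect to require care is precisely this naturality of the comparison isomorphism together with its compatibility with the linear topologies; everything else is formal homological algebra, and the needed naturality is already part of \cite[Theorem~7.7]{CS21}.
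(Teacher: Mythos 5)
Your proposal is correct and follows essentially the same route as the paper, which deduces the lemma from the exactness of the functor $\pi\mapsto\pi_{\Kfin}$, the long exact sequence for relative Lie algebra homology, and the comparison isomorphism of Lemma~\ref{lem:comparison}. Your additional verifications (lifting $K$-finite vectors via the isotypic projectors, and the continuity of the induced maps through the naturality of the comparison isomorphism) simply spell out details the paper leaves implicit.
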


Let $M$ be a $G$-Nash manifold, $Z$ a closed $G$-Nash submanifold, and $E$ a tempered Fr\'{e}chet $G$-vector bundle over $M$.
Put $U:=M\setminus Z$.
Then we obtain an SF representation of $G$ on the space $\cS(M,E)$ of Schwartz sections of $E$, and a subrepresentation on the space $\cS(U,E|_U)$ of Schwartz sections on $U$.
We define $\cS_Z(M,E):=\cS(M,E)/\cS(U,E|_U)$ and its subrepresentation
\[
\cS_Z(M,E)_k:=\Set{f\in\cS(M,E)| \begin{array}{l}\text{$Df=0$ on $Z$ for any differential}\\
\text{operator $D$ on $M$ of order $\le k-1$}\end{array}}/\cS(U,E|_U).
\]
for $k\in\N$.
Write $N_Z^{\vee}(M)$ for the complexification of the conormal bundle for $Z\subset M$.
\begin{lemma}[{\cite[Propositions 8.2 and 8.3]{CS21}}]\label{lem:Borel}
The canonical homomorphisms
\begin{align*}
\cS_Z(M,E)&\to\varprojlim_k \cS_Z(M,E)/\cS_Z(M,E)_k,\\
\cS_Z(M,E)_k/\cS_Z(M,E)_{k+1}&\to\cS(Z,\Sym^k(N_Z^{\vee}(M))\otimes E)\quad\quad(k\in\N)
\end{align*}
are isomorphisms of SF representations of $G$.
\end{lemma}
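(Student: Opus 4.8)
The plan is to reduce the statement to a transverse model along $Z$ by means of a Nash tubular neighbourhood, to compute both assertions on a single fibre of the normal bundle via a Schwartz-theoretic version of Borel's lemma, and then to patch the local answers over $Z$. As a preliminary reduction, note that $\cS(M,E)$ is a nuclear Fr\'echet space, that $\cS(U,E|_U)$ sits inside it as a closed subspace (extension by zero of Schwartz sections on an open Nash subset), and that each $\cS_Z(M,E)_k$ is in turn a closed subspace of $\cS_Z(M,E)$; hence every space occurring in the statement is nuclear Fr\'echet, and the two displayed maps are continuous linear maps of nuclear Fr\'echet spaces. By the open mapping theorem it therefore suffices to prove that each is bijective, and since both are the canonical maps — the projective-limit comparison map and the jet quotient map — defined without any choices, they automatically intertwine the $G$-actions once bijectivity is established. \textbf{Step 1 (localization near $Z$).} One shows that for any open Nash neighbourhood $\Omega$ of $Z$ in $M$ the restriction gives a canonical isomorphism $\cS_Z(M,E)\xrightarrow{\sim}\cS_Z(\Omega,E|_\Omega)$ carrying $\{\cS_Z(M,E)_k\}$ onto $\{\cS_Z(\Omega,E|_\Omega)_k\}$. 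This follows from the standard fact that for $W\subseteq M$ open Nash the extension by zero identifies $\cS(W,E|_W)$ with the subspace of $\cS(M,E)$ whose sections, together with all their derivatives, vanish on $M\setminus W$, applied to $W=\Omega$ and $W=U$, combined with a Nash partition of unity subordinate to the cover $\{\Omega,U\}$ of $M$.

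\textbf{Step 2 (reduction to a vector-space model).} By the Nash tubular neighbourhood theorem, $Z$ has an open Nash neighbourhood Nash-diffeomorphic to the total space of the normal bundle $p\colon N_Z(M)\to Z$, taking $Z$ to the zero section, and under this identification the pullback of $E$ is Nash-isomorphic, as a tempered Fr\'echet bundle, to $p^{\ast}(E|_Z)$. Covering $Z$ by finitely many Nash charts $Z_0$ over which $N_Z(M)$ and $E|_Z$ are trivialized, say $N_Z(M)|_{Z_0}\cong Z_0\times V$ and $E|_{Z_0}\cong Z_0\times W$, and using a Nash partition of unity on $Z$, one reduces the whole statement to the local pieces. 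By nuclearity $\cS(Z_0\times V,\ Z_0\times V\times W)\cong\cS(Z_0)\mathbin{\widehat{\otimes}}\cS(V)\mathbin{\widehat{\otimes}}W$ and $\cS(U,E|_U)$ becomes $\cS(Z_0)\mathbin{\widehat{\otimes}}\cS(V\setminus\{0\})\mathbin{\widehat{\otimes}}W$; since $\cS(Z_0)\mathbin{\widehat{\otimes}}(-)\mathbin{\widehat{\otimes}}W$ is exact on short exact sequences of nuclear Fr\'echet spaces and commutes with countable projective limits of such, it is enough to prove both assertions for $M=V$ a finite-dimensional real vector space, $Z=\{0\}$, and $E$ the trivial rank-one bundle, and then to tensor the resulting isomorphisms with $\cS(Z_0)\mathbin{\widehat{\otimes}}W$.

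\textbf{Step 3 (Borel's lemma on $V$).} For $M=V$, $Z=\{0\}$, the input of Step 1 applied to $W=V\setminus\{0\}$ shows that $\cS(V\setminus\{0\})\subseteq\cS(V)$ is exactly the space of Schwartz functions flat at the origin; hence the Taylor map $\cS(V)\to\prod_{k\in\N}\Sym^k(V_\C^{\vee})$ descends to an injection of $\cS_{\{0\}}(V)$, and it is surjective by the Schwartz refinement of Borel's lemma (every formal power series at $0$ is the Taylor expansion of a Schwartz function). Under the resulting identification $\cS_{\{0\}}(V)\cong\prod_{k\in\N}\Sym^k(V_\C^{\vee})$ the subspace $\cS_{\{0\}}(V)_k$ corresponds to $\prod_{j\ge k}\Sym^j(V_\C^{\vee})$, whence $\cS_{\{0\}}(V)\cong\varprojlim_k\prod_{j<k}\Sym^j(V_\C^{\vee})$ and $\cS_{\{0\}}(V)_k/\cS_{\{0\}}(V)_{k+1}\cong\Sym^k(V_\C^{\vee})$, the topologies matching because all the spaces are nuclear Fr\'echet and the maps are continuous bijections. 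Tensoring back with $\cS(Z_0)\mathbin{\widehat{\otimes}}W$, summing over the charts by the partition of unity, and identifying $\Sym^k(V_\C^{\vee})\otimes W$ with $\Sym^k(N_Z^{\vee}(M))\otimes E$ along the trivializations yields the two isomorphisms in general.

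The substance of the argument is entirely in the Nash-analytic input invoked above: the identification of $\cS$ of an open Nash subset with the flat sections, the existence of a Nash tubular neighbourhood, and the Schwartz refinement of Borel's lemma. These belong to the established theory of Schwartz functions on Nash manifolds; granting them, the remainder is bookkeeping with nuclear Fr\'echet spaces, partitions of unity, and the naturality of the canonical maps. This is the strategy carried out in \cite[Section 8]{CS21}.
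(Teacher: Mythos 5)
This lemma is not proved in the paper at all: it is imported verbatim from \cite[Propositions 8.2 and 8.3]{CS21}, so there is no in-paper argument to compare against, and your sketch should be judged as a reconstruction of the cited proof. As such it is essentially correct and follows the same lines as \cite[Section 8]{CS21}: reduce to the germ of $E$ along $Z$, pass to finitely many Nash charts in which $(M,Z)$ looks like $(Z_0\times V,\,Z_0\times\{0\})$ with $E$ trivialized, identify $\cS(U,E|_U)$ inside $\cS(M,E)$ with the sections flat along $Z$, and conclude with the Schwartz form of Borel's lemma plus $\widehat{\otimes}$-bookkeeping and the open mapping theorem; equivariance is indeed automatic since both maps are canonical. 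Two of your assertions are stronger than what is true or needed, though neither is fatal: (i) the spaces occurring need not be nuclear when the fibre $W$ of the tempered Fr\'echet bundle $E$ is a non-nuclear Fr\'echet space --- nuclearity is only required of the factors $\cS(Z_0)$ and $\cS(V)$, which is what the exactness of $(-)\widehat{\otimes}W$ and the commutation with $\varprojlim_k$ actually use, while the open mapping theorem needs only Fr\'echet; and (ii) the global claim that $E$ over a Nash tubular neighbourhood is isomorphic to $p^{\ast}(E|_Z)$ as a tempered Fr\'echet bundle is not justified for infinite-dimensional fibres, and is also unnecessary, since after your Step 1 localization the chart-by-chart trivializations over slice charts of $(M,Z)$ already carry the whole argument. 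With these adjustments your proof is a faithful account of the argument behind the cited result.
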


Our aim is to give an upper bound for the dimension of coinvariants of a principal series representation with respect to a symmetric subgroup.
In order to apply the following lemma, we will prove the finite-dimensionality of higher homology groups.
\begin{lemma}[{\cite[Lemma 8.4]{CS21}}]\label{lem:commute}
Let $l\in\N$.
Assume that $H_{l+1}(G,\cS_Z(M,E)/\cS_Z(M,E)_k)$ is finite-dimensional for any $k\in\N$. Then the canonical map 
\[
H_l(G,\cS_Z(M,E))\to\varprojlim_k H_l(G,\cS_Z(M,E)/\cS_Z(M,E)_k)
\]
is a linear isomorphism.
\end{lemma}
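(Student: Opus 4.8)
The final statement to prove is Lemma~\ref{lem:commute}, which is quoted from \cite[Lemma 8.4]{CS21}. Since it is cited as a known result, the natural thing is to reconstruct the standard argument that turns a degreewise finiteness hypothesis into commutation of homology with an inverse limit.

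\medskip

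The plan is to set up the inverse system $\{\cS_Z(M,E)/\cS_Z(M,E)_k\}_k$ of SF representations, with surjective transition maps, and recall from Lemma~\ref{lem:Borel} that $\cS_Z(M,E)\cong\varprojlim_k \cS_Z(M,E)/\cS_Z(M,E)_k$. First I would apply the Schwartz-homology long exact sequence of Lemma~\ref{lem:long exact} to the short exact sequences
\[
0\to \cS_Z(M,E)_k/\cS_Z(M,E)_{k+1}\to \cS_Z(M,E)/\cS_Z(M,E)_{k+1}\to \cS_Z(M,E)/\cS_Z(M,E)_k\to 0,
\]
and, passing to the limit in $k$, relate $H_\ast(G,\cS_Z(M,E))$ to $\varprojlim_k H_\ast$ and $\varprojlim^1_k H_\ast$ of the quotients. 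The key homological input is the $\varprojlim$/$\varprojlim^1$ six-term (Milnor) exact sequence for an inverse system of chain complexes of vector spaces: for the complex computing Schwartz homology via Lemma~\ref{lem:comparison} (the relative Lie algebra homology complex $(\wedge^\bullet(\fg/\fk)\otimes(-)_{\Kfin})_K$, which is an exact functor in the representation), one gets a short exact sequence
\[
0\to{\varprojlim_k}^1 H_{l+1}(G,\cS_Z(M,E)/\cS_Z(M,E)_k)\to H_l(G,\cS_Z(M,E))\to \varprojlim_k H_l(G,\cS_Z(M,E)/\cS_Z(M,E)_k)\to 0.
\]
Here I use that each term $\wedge^i(\fg/\fk)\otimes(-)_{\Kfin}$ is a finite-rank-free-module operation on the underlying vector space, so forming the chain complex commutes with $\varprojlim$ over the (surjective, hence Mittag-Leffler) system, and that $\cS_Z(M,E)_{\Kfin}=\varprojlim_k(\cS_Z(M,E)/\cS_Z(M,E)_k)_{\Kfin}$ — the latter because the functor $(-)_{\Kfin}$ is exact and the $K$-isotypic pieces stabilize, the normal-derivative filtration being eventually constant in each $K$-type by admissibility-type considerations (or, more elementarily, because the transition maps are surjective so the system is already Mittag-Leffler and $\varprojlim^1$ of the isotypic sub-systems vanishes).

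\medskip

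With that exact sequence in hand the proof is immediate: the hypothesis says $H_{l+1}(G,\cS_Z(M,E)/\cS_Z(M,E)_k)$ is finite-dimensional for all $k$; a surjective inverse system of finite-dimensional vector spaces is Mittag-Leffler, hence its $\varprojlim^1$ vanishes; therefore the left-hand term of the six-term sequence is $0$ and the canonical map
\[
H_l(G,\cS_Z(M,E))\to\varprojlim_k H_l(G,\cS_Z(M,E)/\cS_Z(M,E)_k)
\]
is an isomorphism of vector spaces. To upgrade ``isomorphism of vector spaces'' to ``linear isomorphism'' of topological vector spaces, I would note that the map is continuous by Lemma~\ref{lem:long exact}, and that on the right the finite-dimensional terms carry their unique Hausdorff topology while the inverse limit topology is then the evident one; the main point is that the bijection is automatically a homeomorphism here because one can also produce a continuous inverse from the compatible system of continuous maps $H_l(G,\cS_Z(M,E)/\cS_Z(M,E)_{k+1})\to H_l(G,\cS_Z(M,E)/\cS_Z(M,E)_k)$ together with surjectivity at each finite stage.

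\medskip

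The step I expect to be the main obstacle is the careful justification that the Schwartz-homology chain complex genuinely commutes with the inverse limit over $k$ — i.e.\ that $\varprojlim_k$ of the complexes $(\wedge^\bullet(\fg/\fk)\otimes(\cS_Z(M,E)/\cS_Z(M,E)_k)_{\Kfin})_K$ computes the homology of $(\wedge^\bullet(\fg/\fk)\otimes\cS_Z(M,E)_{\Kfin})_K$, and that the relevant $\varprojlim^1$ terms vanish. This requires knowing that $(-)_{\Kfin}$ is exact (stated in the excerpt) and that the transition maps in the filtration are surjective (clear from the definition of $\cS_Z(M,E)_k$ as an intersection of kernels of differential operators, with $\cS_Z(M,E)/\cS_Z(M,E)_{k+1}\twoheadrightarrow\cS_Z(M,E)/\cS_Z(M,E)_k$), so that the whole system — and each of its $K$-isotypic and Lie-algebra-twisted variants — is Mittag-Leffler. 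Everything else is formal homological algebra of inverse limits of vector spaces.
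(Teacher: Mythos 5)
The paper offers no proof of this lemma at all---it is quoted verbatim from \cite[Lemma 8.4]{CS21}---so your argument can only be measured against the standard d\'evissage it encapsulates. Your skeleton (realize $\cS_Z(M,E)\cong\varprojlim_k\cS_Z(M,E)/\cS_Z(M,E)_k$ via Lemma \ref{lem:Borel}, compute Schwartz homology by an explicit complex, apply the Milnor $\varprojlim/\varprojlim^1$ sequence to the tower of complexes, and kill $\varprojlim^1$ because towers of finite-dimensional vector spaces are Mittag--Leffler) is the right one. But the step you yourself flag as the crux is justified incorrectly: the asserted identity $\cS_Z(M,E)_{\Kfin}=\varprojlim_k(\cS_Z(M,E)/\cS_Z(M,E)_k)_{\Kfin}$ is false in general, and the appeal to ``admissibility-type considerations''/stabilization of the filtration in each $K$-type has no basis, since $\cS_Z(M,E)$ is not admissible and the graded pieces $\cS(Z,\Sym^k(N_Z^{\vee}(M))\otimes E)$ typically contain a given $K$-type for infinitely many $k$. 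Concretely, for $G=K=\SO(2)$ acting on $\cS_{\{0\}}(\R^2)\cong\C[[x,y]]$, the truncations of $\sum_{k\ge 0}(x+\sqrt{-1}y)^k$ form a compatible system of $K$-finite vectors whose limit is not $K$-finite: taking $K$-finite vectors simply does not commute with $\varprojlim$, and surjectivity or Mittag--Leffler for the isotypic subsystems is beside the point, because what fails is the comparison map itself, not the vanishing of a $\varprojlim^1$ term.

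The gap is repairable with tools already in the paper, which is why this is a flawed justification rather than a dead end. By Lemma \ref{lem:inv=coinv} (see the remark after Lemma \ref{lem:comparison}) the $l$-th chain space may be taken to be $(\wedge^l(\fg/\fk)\otimes\pi)_K\cong(\wedge^l(\fg/\fk)\otimes\pi)^K$, and $K$-\emph{invariants}, unlike $K$-finite vectors, do commute with the topological inverse limit furnished by Lemma \ref{lem:Borel} (tensoring with the finite-dimensional space $\wedge^l(\fg/\fk)$ is harmless, and invariants are a limit); moreover the transition maps of the resulting tower of complexes are surjective, because a $K$-invariant element downstairs can be lifted arbitrarily and then averaged over the compact group $K$. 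With that substitution your Milnor-sequence argument goes through verbatim. One further small inaccuracy: the tower $\{H_{l+1}(G,\cS_Z(M,E)/\cS_Z(M,E)_k)\}_k$ need not have surjective transition maps, but finite-dimensionality alone forces the decreasing chains of images to stabilize, which is all the Mittag--Leffler condition requires; and since the lemma only claims a linear isomorphism, your closing discussion of continuity of the inverse is not needed.
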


Let $\delta_G\colon G\to \R_{>0}$ be the modular function of $G$, which is given by $\delta_G(g)=|\det\Ad_{\fg}(g)|$. 
Remark that $\delta_G$ is an SF representation of $G$. The next lemma describes the Schwartz homology of induced representations.

\begin{lemma}[{Shapiro's lemma, \cite[Proposition 6.7]{CS21}}]\label{lem:Shapiro}
Let $l\in\N$.
Let $H$ be a closed Nash subgroup of $G$ and $\gamma$ an SF representation of $H$. Then there exists a natural isomorphism \[H_l(G,\cS(G/H,G\times_H\gamma))\cong H_l(H,\gamma\cdot\delta_G\cdot\delta^{-1}_H)\] as linear topological spaces.
\end{lemma}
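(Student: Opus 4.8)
The final statement to prove is Lemma~\ref{lem:Shapiro} (Shapiro's lemma), asserting a natural isomorphism $H_l(G,\cS(G/H,G\times_H\gamma))\cong H_l(H,\gamma\cdot\delta_G\cdot\delta_H^{-1})$. Since this is cited from \cite[Proposition 6.7]{CS21}, my plan would be to either quote it directly or, if a self-contained argument is wanted, reconstruct the proof via the comparison with relative Lie algebra homology (Lemma~\ref{lem:comparison}).

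\textbf{The plan.} First I would reduce to the relative Lie algebra side. By Lemma~\ref{lem:comparison}, $H_*(G,\cS(G/H,G\times_H\gamma))\cong H_*(\fg,K;\cS(G/H,G\times_H\gamma)_{\Kfin})$ where $K$ is a maximal compact subgroup of $G$, and similarly $H_*(H,\gamma\cdot\delta_G\cdot\delta_H^{-1})\cong H_*(\fh,K_H;(\gamma\cdot\delta_G\cdot\delta_H^{-1})_{\Kfin})$ with $K_H=K\cap H$ a maximal compact subgroup of $H$ (after conjugating $H$ so that $K\cap H$ is maximal compact in $H$, which is harmless up to isomorphism). So it suffices to produce a natural isomorphism between these two relative Lie algebra homologies, compatibly with the topologies. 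Then I would identify the $(\fg,K)$-module of $K$-finite vectors in the Schwartz induced representation $\cS(G/H,G\times_H\gamma)$: this is the algebraic (cohomologically) induced module, and the standard Shapiro lemma in the relative Lie algebra setting (see e.g. Borel--Wallach, or \cite{CS21}) gives $H_*(\fg,K;\mathrm{ind}_{\fh,K_H}^{\fg,K}(\gamma_{\Kfin}\cdot\delta_G\delta_H^{-1}))\cong H_*(\fh,K_H;\gamma_{\Kfin}\cdot\delta_G\delta_H^{-1})$, the twist by $\delta_G\delta_H^{-1}$ arising because Schwartz induction is not normalized. The remaining point is that these algebraic isomorphisms are isomorphisms of linear topological spaces when each homology group carries the subquotient topology coming from the nuclear Fréchet structure; this follows by tracking the topology through the bar/Koszul resolution, exactly as in the proof of Lemma~\ref{lem:comparison}.

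\textbf{Alternative, more direct route.} Rather than going through Lie algebra homology, one can argue directly on the level of Schwartz homology using its characterization as a derived functor of coinvariants together with an explicit projective (bar-type) resolution in the category of SF representations: the complex $\cS(G^{\bullet+1}\times_H\gamma)$ computing $H_*(G,\cS(G/H,G\times_H\gamma))$ can be matched, via the Schwartz-function analogue of the "untwisting" map $\cS(G\times_H V)\cong \cS(G)\hat\otimes_{\cS(H)} V$ and Fubini for Schwartz functions, with the complex computing $H_*(H,\gamma\cdot\delta_G\delta_H^{-1})$; the modular twist enters through the change-of-variables in the integration that identifies the two complexes. This is morally "induction is exact and sends projectives to projectives, hence commutes with the derived coinvariants," and is precisely the content of \cite[Proposition 6.7]{CS21}.

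\textbf{Main obstacle.} The genuinely delicate part is not the algebra — the abstract Shapiro isomorphism is formal — but the \emph{topological} bookkeeping: one must check that all the identifications (the untwisting $\cS(G\times_H V)\cong\cS(G)\hat\otimes_{\cS(H)}V$, the Fubini-type factorizations, the passage to $K$-finite vectors via Lemma~\ref{lem:inv=coinv}) are topological isomorphisms of nuclear spaces and remain so after passing to the subquotients defining homology, so that the resulting isomorphism $H_l(G,\cS(G/H,G\times_H\gamma))\cong H_l(H,\gamma\cdot\delta_G\cdot\delta_H^{-1})$ holds in the category of linear topological spaces, not merely of vector spaces. Since this is carried out carefully in \cite{CS21}, in the paper I would simply invoke \cite[Proposition 6.7]{CS21} for the statement and refer the reader there for the proof.
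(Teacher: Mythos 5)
Your proposal matches the paper exactly: Lemma~\ref{lem:Shapiro} is not proved in the paper but is quoted directly from \cite[Proposition 6.7]{CS21}, which is precisely what you conclude you would do. The reconstruction sketches you offer (via Lemma~\ref{lem:comparison} or via a bar-type resolution and Schwartz induction) are extra material beyond what the paper contains, but your bottom line of invoking the citation is the paper's own approach.
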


The following lemma relates the zeroth homology and the distinguishedness of an SF representation.
\begin{lemma}[{\cite[Proposition 1.9]{CS21}}]\label{lem:Hausdorff}
Let $\pi$ be an SF representation of $G$ and $l\in\N$.
If $H_l(G,\pi)$ is finite-dimensional, then it is Hausdorff.
In particular, if $H_0(G,\pi)$ is finite-dimensional, then we have a natural identification $H_0(G,\pi)^{\vee}\cong\Hom_G(\pi,\triv)$.
\end{lemma}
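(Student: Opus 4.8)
The plan is to reduce the statement to a concrete topological model of $H_l(G,\pi)$ and then invoke a standard Fr\'echet-space fact; since this is \cite[Proposition 1.9]{CS21} one could also simply cite it, but I outline a self-contained argument. By Lemma \ref{lem:comparison} and the remark following it, $H_l(G,\pi)$ is, as a linear topological space, the degree-$l$ homology of the complex \eqref{eq:standard}; by Lemma \ref{lem:inv=coinv} each term $(\wedge^l(\fg/\fk)\otimes\pi)_K$ is a closed subspace of the Fr\'echet space $\wedge^l(\fg/\fk)\otimes\pi$, hence itself Fr\'echet, and the differentials are continuous. So I would write $C_\bullet$ for this complex, $Z_l:=\Ker(\partial_l)$ (closed, hence Fr\'echet), $B_l:=\Image(\partial_{l+1})\subseteq Z_l$, and record that $H_l(G,\pi)=Z_l/B_l$ with the quotient topology. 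The whole problem then becomes: if $\dim(Z_l/B_l)<\infty$, then $B_l$ is closed in $Z_l$ — because a quotient of a Fr\'echet space by a subspace is Hausdorff if and only if that subspace is closed.

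To show $B_l$ is closed I would pass to $\overline{B_l}$, a closed (hence Fr\'echet) subspace of $Z_l$ with $B_l\subseteq\overline{B_l}$; since $\overline{B_l}/B_l$ embeds into $H_l(G,\pi)$, it is finite-dimensional, say of dimension $m$. Then I would view $\partial_{l+1}$ as a continuous linear map $T\colon C_{l+1}\to\overline{B_l}$ with dense image $B_l$ of codimension $m$, choose $v_1,\dots,v_m\in\overline{B_l}$ mapping to a basis of $\overline{B_l}/B_l$, and form the continuous surjection $S\colon C_{l+1}\oplus\C^m\to\overline{B_l}$, $S(x,t)=Tx+\sum_{i=1}^m t_iv_i$. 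The open mapping theorem makes $S$ a topological quotient map; composing with the algebraic projection $q\colon\overline{B_l}\to\overline{B_l}/B_l$, one sees that $q\circ S$ is just the coordinate projection onto $\C^m$ and hence continuous, so $q$ is continuous by the universal property of the quotient topology. Therefore $B_l=\Ker(q)$ is closed in $\overline{B_l}$, forcing $B_l=\overline{B_l}$; in particular $B_l$ is closed in $Z_l$ and $H_l(G,\pi)=Z_l/B_l$ is Fr\'echet, a fortiori Hausdorff.

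For the final clause I would specialize to $l=0$, where $H_0(G,\pi)=\pi_G$ carries the quotient topology; Hausdorffness now says that $\sum_{g\in G}(g-1)\pi$ is closed and that $\pi_G$ is a finite-dimensional Hausdorff topological vector space, so its continuous dual equals its algebraic dual $H_0(G,\pi)^{\vee}$. It then remains to match $\Hom_G(\pi,\triv)$ with the continuous dual of $\pi_G$: a continuous $G$-invariant functional on $\pi$ annihilates $\sum_{g\in G}(g-1)\pi$, hence factors through a continuous functional on $\pi_G$ (continuity of the induced map follows since $\pi\to\pi_G$ is a quotient map), and conversely every continuous functional on $\pi_G$ pulls back to such a functional; this correspondence is a bijection, giving the asserted natural identification.

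I expect the only genuine obstacle to be the functional-analytic input — that a continuous linear map of Fr\'echet spaces whose image has finite codimension in the target has closed image — which is exactly what the open-mapping-theorem argument in the second paragraph supplies. The identification of $H_l(G,\pi)$ with the subquotient $Z_l/B_l$ via \cite{CS21} and the $l=0$ bookkeeping are both formal.
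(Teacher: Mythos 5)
Your argument is correct, and there is nothing internal to compare it with: the paper gives no proof of this lemma, quoting it directly from \cite[Proposition 1.9]{CS21}, so what you have supplied is a self-contained reconstruction along the expected functional-analytic lines. Your reduction via Lemma \ref{lem:comparison} and Lemma \ref{lem:inv=coinv} legitimately realizes $H_l(G,\pi)$ as $Z_l/B_l$ for a complex of Fr\'echet spaces (each term being topologically a direct summand, namely the $K$-invariants, of the Fr\'echet space $\wedge^l(\fg/\fk)\otimes\pi$), and the continuity of the differentials, which you use silently, is standard since the $\fg$-action on an SF representation is by continuous operators. The heart of your proof—that a dense image of finite codimension of a continuous linear map between Fr\'echet spaces is already closed, obtained by adjoining a finite-dimensional complement and applying the open mapping theorem—is exactly the classical lemma that makes the homology Hausdorff, and the $l=0$ bookkeeping is handled correctly, including the point that Hausdorffness is what lets the continuous dual of $\pi_G$ coincide with the full dual $H_0(G,\pi)^{\vee}$ while $\Hom_G(\pi,\triv)$ is always the continuous dual of $\pi_G$. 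The only implicit step is that the homology of the complex \eqref{eq:standard} is topologized as you assume (kernel with the subspace topology, quotient by the image), but this is precisely how the paper sets up the topology after Lemma \ref{lem:comparison}, so no gap results.
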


The next lemma gives the zeroth homology of tensor products of two irreducible SAF representations by Lemma \ref{lem:Hausdorff} and Proposition \ref{prop:finite}.
\begin{lemma}\label{lem:contragredient}
Let $G$ be a real reductive group, $\pi_1,\pi_2$ irreducible SAF representations of $G$, and $\chi$ a character of $G$.
Then $\Hom_G(\pi_1\hat{\otimes}\pi_2,\chi)$ is one-dimensional if $\pi_1\cong \pi_2^{\vee}\cdot\chi$, and zero otherwise.
Here $\hat{\otimes}$ stands for the completed tensor product of two nuclear Fr\'echet spaces.
\end{lemma}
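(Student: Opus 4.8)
The plan is to reduce the statement about the completed tensor product $\pi_1 \hat\otimes \pi_2$ to a Hom statement for a single representation and then invoke the known duality for irreducible SAF representations. First I would observe that, after twisting, it suffices to treat the case $\chi = \triv$: since $\pi_1 \hat\otimes \pi_2 \cdot \chi^{-1}$ is canonically $(\pi_1 \cdot \chi^{-1}) \hat\otimes \pi_2$ (the character can be absorbed into either factor), we have $\Hom_G(\pi_1 \hat\otimes \pi_2, \chi) \cong \Hom_G((\pi_1 \cdot \chi^{-1}) \hat\otimes \pi_2, \triv)$, and $\pi_1 \cong \pi_2^\vee \cdot \chi$ is equivalent to $\pi_1 \cdot \chi^{-1} \cong \pi_2^\vee$. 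So I may assume $\chi = \triv$ and must show $\Hom_G(\pi_1 \hat\otimes \pi_2, \triv)$ is one-dimensional when $\pi_1 \cong \pi_2^\vee$ and zero otherwise.

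Next I would identify $\Hom_G(\pi_1 \hat\otimes \pi_2, \triv)$ with $\Hom_G(\pi_1, \pi_2^\vee)$. This is where the topological bookkeeping enters: a continuous $G$-invariant functional on the nuclear Fréchet space $\pi_1 \hat\otimes \pi_2$ is the same as a continuous bilinear $G$-invariant pairing $\pi_1 \times \pi_2 \to \C$, hence a continuous $G$-equivariant map $\pi_1 \to \pi_2'$ into the strong dual. One then checks that such a map automatically lands in the smooth (Casselman--Wallach) contragredient $\pi_2^\vee$ rather than merely the full dual, using that $\pi_1$ is an SAF representation (the image of a smooth vector is a smooth vector, and by admissibility and the automatic continuity / Casselman--Wallach theory the map extends to the globalization as stated); I would cite the uniqueness and functoriality of the SAF globalization recalled in Section \ref{sec:intro-3} (and \cite{BK14}) for this. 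Once we have $\Hom_G(\pi_1 \hat\otimes \pi_2, \triv) \cong \Hom_G(\pi_1, \pi_2^\vee)$, both $\pi_1$ and $\pi_2^\vee$ are irreducible SAF representations, so by Schur's lemma for the Casselman--Wallach category this Hom space is one-dimensional if $\pi_1 \cong \pi_2^\vee$ and zero otherwise, which is exactly the claim.

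The main obstacle I expect is the second step: making sure the passage between the invariant functional on $\pi_1 \hat\otimes \pi_2$ and an equivariant map into the \emph{smooth} dual $\pi_2^\vee$ is airtight, rather than just a statement about abstract dual spaces. The subtlety is that $(\pi_1 \hat\otimes \pi_2)' \cong \pi_1' \hat\otimes \pi_2'$ for nuclear Fréchet spaces gives maps into the full continuous dual, and one must use smoothness of the $G$-action together with admissibility to cut down to $\pi_2^\vee$; invoking automatic continuity and the equivalence of categories between $(\fg,K)$-modules and SAF representations handles this cleanly, so the argument is short once that input is quoted. Everything else — the twist reduction and Schur's lemma — is routine.
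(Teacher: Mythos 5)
Your plan is correct, but it follows a different route from the paper's proof. You reduce everything to $\Hom_G(\pi_1,\pi_2^\vee)$ by viewing an invariant functional on $\pi_1\hat{\otimes}\pi_2$ as a continuous equivariant map $\pi_1\to\pi_2'$, then arguing that its image lies in the smooth vectors of the dual, identified with the Casselman--Wallach contragredient, and finishing with Schur's lemma; the nonvanishing when $\pi_1\cong\pi_2^\vee\cdot\chi$ then rests on the (separate, hence joint) continuity of the canonical pairing $\pi_2^\vee\times\pi_2\to\C$. The paper instead never leaves the algebraic side for the upper bound: it restricts $f$ to the dense subspace $(\pi_1)_{\Kfin}\otimes(\pi_2)_{\Kfin}$, obtains a $(\fg_\C,K)$-map $(\pi_1)_{\Kfin}\to(\pi_2)^\vee_{\Kfin}\cdot\chi$, and uses irreducibility plus uniqueness of the Casselman--Wallach globalization for the ``zero otherwise'' direction and uniqueness of Harish-Chandra--module homomorphisms plus density for the bound $\le 1$; for existence it realizes $\pi_1\hookrightarrow\Ind^G_P(\gamma)$ by Casselman's subrepresentation theorem, writes $\pi_2$ as a quotient of $\Ind^G_P(\gamma^\vee)\cdot\chi\cong(\Ind^G_P(\gamma))^\vee\cdot\chi$, and induces the invariant functional from the canonical pairing on the induced representations, where continuity is manifest. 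The trade-off: your argument is shorter but hinges on quoting nontrivial Casselman--Wallach facts not recorded in the paper (the identification of $\pi^\vee$ with the smooth vectors of the strong dual, automatic continuity, and the joint continuity of the canonical pairing between $\pi_2^\vee$ and $\pi_2$); the paper's construction via the subrepresentation theorem is precisely a way to manufacture that continuous pairing by hand, so it is more self-contained, at the cost of the extra step with standard modules. If you flesh out your second step by restricting to $K$-finite vectors (as the paper does) rather than invoking the smooth-dual identification, the two arguments essentially converge for the necessity and uniqueness parts, and only the existence step remains genuinely different.
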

\begin{proof}
Assume that $f$ is a nonzero element in $\Hom_G(\pi_1\hat{\otimes}\pi_2,\chi)$.
Then $f$ restricts to a nonzero $(\fg_{\C},K)$-homomorphism from $(\pi_1)_{\text{$K$-fin}}\otimes(\pi_2)_{\text{$K$-fin}}$ to $\chi$, which induces a $(\fg_{\C},K)$-homomorphism from $(\pi_1)_{\text{$K$-fin}}$ into $(\pi_2)^{\vee}_{\text{$K$-fin}}\cdot\chi$, where $(\pi_2)^{\vee}_{\text{$K$-fin}}$ denotes the contragredient $(\fg_{\C},K)$-module of $(\pi_2)_{\text{$K$-fin}}$. Since it is nonzero, it is an isomorphism. Then the uniqueness of the Casselman-Wallach globalization proves $\pi_1\cong \pi_2^{\vee}\cdot\chi$, which proves the latter assertion.

We next assume $\pi_1\cong \pi_2^{\vee}\cdot\chi$. By Casselman's subrepresentation theorem, $\pi_1$ can be imbedded into a normalized parabolic induction $\Ind^G_P(\gamma)$, where $P$ denotes a minimal parabolic subgroup of $G$ and $\gamma$ denotes a finite-dimensional irreducible SF representation of $P$.
From $(\pi_1)_{\text{$K$-fin}}\cong(\pi_2)^{\vee}_{\text{$K$-fin}}\cdot\chi$, we may regard $\pi_2$ as a quotient of the SAF representation $\Ind^G_P(\gamma^{\vee})\cdot\chi\cong(\Ind^G_P(\gamma))^{\vee}\cdot\chi$. Then the canonical nonzero $G$-homomorphism from $\Ind^G_P(\gamma)\hat{\otimes}\Ind^G_P(\gamma^{\vee})\cdot\chi$ to $\chi$ induces the one from $\pi_1\hat{\otimes}\pi_2$ to $\chi$.
Since nonzero $(\fg_{\C},K)$-homomorphisms from $(\pi_1)_{\text{$K$-fin}}$ to $(\pi_2)^{\vee}_{\text{$K$-fin}}\cdot\chi$ are unique up to a scalar multiple and the subspace $(\pi_1)_{\text{$K$-fin}}\otimes(\pi_2)_{\text{$K$-fin}}$ of $\pi_1\hat{\otimes}\pi_2$ is dense, we see $\Hom_G(\pi_1\hat{\otimes}\pi_2,\chi)$ is one-dimensional, which completes the proof.
\end{proof}

\subsection{Lie algebra homology}\label{sec:3-2}
In this section, we prepare a criterion (Lemma \ref{lem:spectral seq}) for the vanishing of relative Lie algebra homologies, which will be used in Section \ref{sec:5}.

Let $G$ be a real reductive group and $K$ a maximal compact subgroup. Write $\theta$ for the Cartan involution on $\fg$ with $\fk=\fg^{\theta}$.
Fix a maximally abelian subspace $\fs$ of $\fg^{-\theta}$ and a positive system $\Sigma^+$ of the root system $\Sigma$ for $(\fg,\fs)$.
Let $P=LN$ be a standard parabolic subgroup of $G$ and its Levi decomposition.

Let $Z(\fg_{\C})$ be the center of the enveloping algebra $U(\fg_{\C})$.
A $U(\fg_{\C})$-module $V$ is called $Z(\fg_{\C})$-finite if $V$ is annihilated by an ideal of finite codimension of $Z(\fg_{\C})$.

For any $(\fg_{\C},K)$-module $V$, we have a natural isomorphism
\[
H_{\ast}(\fg_{\C},K;V)^{\vee}\cong H^{\ast}(\fg_{\C},K;V^{\vee})
\]
of vector spaces, and a natural isomorphism
\[
H_{\ast}(\fn_{\C},V)^{\vee}\cong H^{\ast}(\fn_{\C},V^{\vee})
\]
of $(\fl_{\C},K\cap L)$-modules
by \cite[Theorem 3.1]{KV95}, where the symbols $^\vee$ of $H_{\ast}(\fg_{\C},K;V)^{\vee}, V^{\vee}, H_{\ast}(\fn_{\C},V)^{\vee}$ denote the contragredient, $K$-finite contragredient, $(L\cap K)$-finite contragredient, respectively.
Hence we can translate results on relative Lie algebra cohomology (resp.\,$\fn_{\C}$-cohomology) into those on relative Lie algebra homology (resp.\,$\fn_{\C}$-homology) as follows.

\begin{lemma}[{\cite[Corollary I.4.2 and Section I.5.1]{BW00}}]\label{lem:vanishing}
Let $V_1, V_2$ be $Z(\fg_{\C})$-finite $(\fg_{\C},K)$-modules.
Assume that $V_1$ is finite-dimensional, and that any infinitesimal character of the composition factors of $V_2$ as $(\fg_{\C},K)$-modules does not appear in the infinitesimal characters of composition factors of the contragredient $(\fg_{\C},K)$-module $V_1^{\vee}$ of $V_1$. Then $H_{\ast}(\fg_{\C},K;V_1\otimes V_2)=0$.
\end{lemma}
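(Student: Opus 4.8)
The plan is to reduce the homology vanishing to a statement about relative Lie algebra \emph{co}homology, and from there to an $\operatorname{Ext}$-vanishing controlled by infinitesimal characters. First I would invoke the duality $H_{\ast}(\fg_{\C},K;W)^{\vee}\cong H^{\ast}(\fg_{\C},K;W^{\vee})$ recalled above from \cite[Theorem 3.1]{KV95}, applied with $W=V_1\otimes V_2$; this reduces the assertion to showing $H^{\ast}(\fg_{\C},K;(V_1\otimes V_2)^{\vee})=0$. Since $V_1$ is finite-dimensional, its $K$-finite contragredient is the full linear dual and $(V_1\otimes V_2)^{\vee}\cong V_1^{\vee}\otimes V_2^{\vee}$, so the goal becomes $H^{\ast}(\fg_{\C},K;V_1^{\vee}\otimes V_2^{\vee})=0$.

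Next I would rewrite this cohomology as an $\operatorname{Ext}$ group in the category of $(\fg_{\C},K)$-modules. Recall that $H^{\ast}(\fg_{\C},K;M)=\operatorname{Ext}^{\ast}_{(\fg_{\C},K)}(\C,M)$. Because $V_1^{\vee}$ is finite-dimensional, the functor $-\otimes V_1^{\vee}$ is exact and left adjoint to the exact functor $V_1\otimes-$ (using $(V_1^{\vee})^{\vee}\cong V_1$), hence it carries projectives to projectives and
\[
H^{\ast}(\fg_{\C},K;V_1^{\vee}\otimes V_2^{\vee})\cong\operatorname{Ext}^{\ast}_{(\fg_{\C},K)}(\C\otimes V_1^{\vee},V_2^{\vee})\cong\operatorname{Ext}^{\ast}_{(\fg_{\C},K)}(V_1,V_2^{\vee}).
\]
Both $V_1$ and $V_2^{\vee}$ are $Z(\fg_{\C})$-finite, so each splits as the finite direct sum of its generalized infinitesimal character subspaces, and $\operatorname{Ext}^{\ast}_{(\fg_{\C},K)}$ between two modules with disjoint sets of generalized infinitesimal characters vanishes — this is Wigner's lemma, proved via the Casselman--Osborne lemma, and is the substance of \cite[Corollary I.4.2 and Section I.5.1]{BW00}. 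Therefore $\operatorname{Ext}^{\ast}_{(\fg_{\C},K)}(V_1,V_2^{\vee})=0$ as soon as no composition factor of $V_1$ and no composition factor of $V_2^{\vee}$ share an infinitesimal character.

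Finally I would match this condition with the hypothesis of the lemma. Taking the $K$-finite contragredient sends a $(\fg_{\C},K)$-module with infinitesimal character $\chi_{\lambda}$ to one with infinitesimal character $\chi_{-\lambda}$, so the infinitesimal characters occurring in $V_2^{\vee}$ are exactly $\{\chi_{-\lambda}:\chi_{\lambda}\text{ occurs in }V_2\}$ and those occurring in $V_1^{\vee}$ are $\{\chi_{-\mu}:\chi_{\mu}\text{ occurs in }V_1\}$. Since $\chi_{\mu}=\chi_{-\lambda}$ if and only if $\chi_{\lambda}=\chi_{-\mu}$, disjointness of the infinitesimal characters of $V_1$ and $V_2^{\vee}$ is equivalent to disjointness of those of $V_1^{\vee}$ and $V_2$, which is precisely the assumption. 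The only delicate points are the bookkeeping of these contragredient twists (together with the identity $(V_1\otimes V_2)^{\vee}\cong V_1^{\vee}\otimes V_2^{\vee}$, valid because $V_1$ is finite-dimensional) and checking that the generalized-infinitesimal-character decomposition and Wigner's lemma remain available for $Z(\fg_{\C})$-finite modules that need not be of finite length; both are routine, and this is the step where I would be most careful.
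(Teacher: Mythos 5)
Your proposal is correct and follows essentially the route the paper itself takes: the paper's proof of this lemma is precisely the duality $H_{\ast}(\fg_{\C},K;V)^{\vee}\cong H^{\ast}(\fg_{\C},K;V^{\vee})$ from \cite{KV95} followed by the citation of \cite[Corollary I.4.2 and Section I.5.1]{BW00}, whose content is exactly the Ext-form of Wigner's lemma (via Casselman--Osborne) together with its extension to $Z(\fg_{\C})$-finite modules through the generalized-infinitesimal-character decomposition that you spell out, and your bookkeeping of infinitesimal characters under contragredients correctly matches the hypothesis. The only blemish is a slip in your displayed chain: the middle term should be $\operatorname{Ext}^{\ast}_{(\fg_{\C},K)}(\C\otimes V_1,V_2^{\vee})$, coming from the adjunction $\Hom(X\otimes V_1,Y)\cong\Hom(X,V_1^{\vee}\otimes Y)$ (so it is $-\otimes V_1$ that is left adjoint to the exact functor $V_1^{\vee}\otimes-$), not $\C\otimes V_1^{\vee}$; with that correction the identification $H^{\ast}(\fg_{\C},K;V_1^{\vee}\otimes V_2^{\vee})\cong\operatorname{Ext}^{\ast}_{(\fg_{\C},K)}(V_1,V_2^{\vee})$ and the rest of your argument stand.
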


\begin{lemma}[{\cite[Corollary3.1.6]{Vog81}}]\label{lem:n-homology2}
Let $l\in\N$ and $V$ be a $Z(\fg_{\C})$-finite $\fg_{\C}$-module. 
Then the $\fl_{\C}$-module $H_{l}(\fn_{\C},V)$ is $Z(\fl_{\C})$-finite.
\end{lemma}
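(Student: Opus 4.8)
The plan is to reduce the statement to the comparison between the actions of $Z(\fg_{\C})$ and $Z(\fl_{\C})$ on $\fn_{\C}$-homology, which is governed by the Harish-Chandra homomorphism relative to the parabolic $\fp_{\C}=\fl_{\C}\oplus\fn_{\C}$. First I would recall that $H_{l}(\fn_{\C}, V)$ is computed as the $l$-th homology of the standard Chevalley--Eilenberg complex $\wedge^{\bullet}\fn_{\C}\otimes V$, and that this complex carries a natural action of $\fl_{\C}$ (coming from the adjoint action on $\wedge^{\bullet}\fn_{\C}$ and the given action on $V$, using that $\fl_{\C}$ normalizes $\fn_{\C}$); the differentials are $\fl_{\C}$-equivariant, so $H_{l}(\fn_{\C}, V)$ inherits an $\fl_{\C}$-module structure, and the assertion to prove is that it is $Z(\fl_{\C})$-finite.

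The key step is the following: there is a homomorphism $p\colon Z(\fg_{\C}) \to Z(\fl_{\C})$ (essentially the unnormalized Harish-Chandra homomorphism attached to $\fp_{\C}$, defined by $z \equiv p(z) \pmod{\fn_{\C} U(\fg_{\C})}$ for $z \in Z(\fg_{\C})$, after checking $p(z)\in U(\fl_{\C})$ lands in the center) such that, on $\fn_{\C}$-homology, the operator induced by $z\in Z(\fg_{\C})$ acting on $V$ coincides with the operator induced by $p(z)\in Z(\fl_{\C})$ acting via the $\fl_{\C}$-module structure on $H_{l}(\fn_{\C},V)$. This is exactly the content of \cite[Corollary 3.1.6]{Vog81} (or, in the cohomological incarnation, the classical computation of the center's action on $\fn$-cohomology via Kostant / Casselman--Osborne). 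Granting this compatibility, the argument finishes quickly: if $I \subseteq Z(\fg_{\C})$ is an ideal of finite codimension annihilating $V$, then $I$ annihilates $H_{l}(\fn_{\C},V)$ as well, hence $p(I) \cdot Z(\fl_{\C})$ annihilates $H_{l}(\fn_{\C},V)$ as a $Z(\fl_{\C})$-module; and since $Z(\fg_{\C}) \to Z(\fl_{\C})$ makes $Z(\fl_{\C})$ a finitely generated $Z(\fg_{\C})$-module (a standard fact, e.g.\ because after passing to symmetric algebras it becomes a finite extension of invariant rings by Chevalley's theorem, or can be seen via the Harish-Chandra isomorphisms identifying both with Weyl-invariants in a polynomial ring), the quotient $Z(\fl_{\C})/p(I)Z(\fl_{\C})$ is finite-dimensional. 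Therefore the ideal $p(I)Z(\fl_{\C})$ of $Z(\fl_{\C})$ has finite codimension and annihilates $H_{l}(\fn_{\C},V)$, which is precisely the definition of $Z(\fl_{\C})$-finiteness.

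The main obstacle is establishing the Casselman--Osborne-type compatibility of the two central actions on $H_{l}(\fn_{\C},V)$; but since this is quoted directly from \cite[Corollary 3.1.6]{Vog81}, the write-up needs only cite it, state the map $p$, and then carry out the finiteness bookkeeping above. A minor point to address is that $V$ need not be admissible or of finite length — only $Z(\fg_{\C})$-finiteness is assumed — so one should be careful that the argument uses only the module structure and never finite-dimensionality of $V$ itself; inspection shows it does not, since everything is phrased in terms of annihilator ideals. No subtlety of topology enters, as this lemma is purely algebraic.
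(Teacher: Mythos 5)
Your proof is correct, and it is the standard argument behind the cited result: the paper gives no proof of this lemma, quoting it directly from Vogan's book, and your route --- the Casselman--Osborne compatibility of the $Z(\fg_{\C})$-action on $H_{l}(\fn_{\C},V)$ with the (unnormalized) Harish--Chandra homomorphism $p\colon Z(\fg_{\C})\to Z(\fl_{\C})$, combined with the finiteness of $Z(\fl_{\C})$ as a module over the image of $Z(\fg_{\C})$ via the Harish--Chandra isomorphisms and Chevalley's theorem, followed by the annihilator-ideal bookkeeping --- is exactly how that corollary is obtained, and you rightly note that no admissibility or finite-length hypothesis on $V$ is needed. One caveat on references: the compatibility statement you invoke is the Casselman--Osborne theorem itself (Theorem 3.1.5 in Vogan's book, or its analogue in Knapp--Vogan), not Corollary 3.1.6, which is essentially the statement of the present lemma, so citing the corollary for that intermediate step would be circular; with that citation adjusted, the write-up is complete.
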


\begin{lemma}\label{lem:spectral seq}
Let $l\in\N$ and $L'$ be a closed subgroup of $L$.
Write $P'=L'N$.
Let $\fv$ be a finite-dimensional real Lie algebra where $P'$ acts as Lie algebra automorphisms. 
Let $V$ be a $Z(\fg_{\C})$-finite $(\fg_{\C},K)$-module and $W$ a finite-dimensional $((\fp'\ltimes\fv)_{\C},K\cap L')$-module.
Assume that 
the action of $\fn$ on $\fv$ and $W$ are nilpotent, and that 
there exists $X\in\fl'\cap\fz(\fl)$ such that for any $p,q\in\N$ with $p+q\le l$, the eigenvalues of $X$ on the $\fl'_{\C}$-module $H_p(\fn_{\C},V)\otimes\wedge^q\fv_{\C}\otimes W$ are nonzero.
Then 
\[H_l((\fp'\ltimes\fv)_{\C},K\cap L';V\otimes W)=0.\]
Here we regard $V$ as a $((\fp'\ltimes\fv)_{\C},K\cap L')$-module by letting $\fv$ act trivially.
\end{lemma}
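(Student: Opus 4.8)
The statement is a vanishing result for relative Lie algebra homology $H_l((\fp'\ltimes\fv)_{\C},K\cap L';V\otimes W)$ under an eigenvalue positivity hypothesis for an element $X$ lying in the center of $\fl$. The natural tool is a Hochschild--Serre type spectral sequence associated to the ideal $\fn\ltimes\fv$ inside $\fp'\ltimes\fv$, together with the fact that homology with respect to a nilpotently-acting nilpotent Lie algebra can be computed via a Koszul complex and then cut down using a semisimple grading operator. The plan is to reduce, in three stages, the computation of $H_l((\fp'\ltimes\fv)_{\C},K\cap L';V\otimes W)$ to a direct sum of relative Lie algebra homology groups of the form $H_{l-p-q}(\fl'_{\C},K\cap L'; H_p(\fn_{\C},V)\otimes\wedge^q\fv_{\C}\otimes W)$ (or at least a spectral sequence converging to the left side whose $E_2$ or $E_3$ page is built from such terms), and then to show each such term vanishes because $X\in\fl'\cap\fz(\fl)$ acts on the coefficient module with only nonzero eigenvalues, so that by a standard ``$Z(\fg_\C)$-finite / central character'' argument (Lemma \ref{lem:vanishing} applied with $V_1$ the trivial module, or its infinitesimal-character analogue) the relative homology of $\fl'_\C$ in these coefficients must be zero.

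\textbf{Step 1: Factor out $\fv$.} First I would use that $\fv$ is an ideal in $\fp'\ltimes\fv$ to get a spectral sequence with $E^2_{p,q}=H_p((\fp')_{\C},K\cap L';H_q(\fv_{\C},V\otimes W))$ converging to $H_{p+q}((\fp'\ltimes\fv)_{\C},K\cap L';V\otimes W)$. Since $\fv$ acts trivially on $V$ and the action of $\fn\subset\fp'$ (hence the whole relevant part) on $\fv$ is nilpotent, $H_q(\fv_{\C},V\otimes W)$ is computed by the Koszul complex $\wedge^\bullet\fv_\C\otimes V\otimes W$ and is a subquotient of $\wedge^q\fv_\C\otimes V\otimes W$ as an $\fl'_\C$-module (and as a $\fp'_\C$-module). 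In particular $X$ acts on it with eigenvalues among those of $X$ on $\wedge^q\fv_\C\otimes V\otimes W$.

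\textbf{Step 2: Factor out $\fn$.} Next, $\fn$ is an ideal in $\fp'=\fl'\ltimes\fn$, so for each fixed coefficient module $M_q:=H_q(\fv_\C,V\otimes W)$ there is a further spectral sequence $E^2_{s,t}=H_s(\fl'_\C,K\cap L';H_t(\fn_\C,M_q))$ converging to $H_{s+t}((\fp')_\C,K\cap L';M_q)$. Here I would use Lemma \ref{lem:n-homology2} to know $H_t(\fn_\C,-)$ preserves $Z(\fl_\C)$-finiteness starting from the $Z(\fg_\C)$-finite module $V$ (and $W$ finite-dimensional), so all modules in sight are $Z(\fl_\C)$-finite, which is what makes the central-character vanishing argument available. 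Composing the two spectral sequences (or iterating), every homology group $H_l((\fp'\ltimes\fv)_{\C},K\cap L';V\otimes W)$ is, after passing to associated graded pieces, a subquotient of a direct sum over $p+q+s=l$ (with an extra index for the Koszul filtration relating $H_q(\fv_\C,\cdot)$ to $\wedge^q\fv_\C\otimes V\otimes W$ and $H_p(\fn_\C,V)$) of groups $H_s(\fl'_\C,K\cap L';\,H_p(\fn_\C,V)\otimes\wedge^q\fv_\C\otimes W)$ with $p+q+s\le l$.

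\textbf{Step 3: Central character vanishing.} For each such term with $p+q\le l$, the hypothesis says $X$ has only nonzero eigenvalues on $H_p(\fn_\C,V)\otimes\wedge^q\fv_\C\otimes W$. Since $X\in\fl'\cap\fz(\fl)$ lies in the center of $\fl'_\C$ and acts semisimply there (it is a central semisimple element), decompose the coefficient module into $X$-eigenspaces; each is an $\fl'_\C$-submodule on which $X$ acts by a nonzero scalar $c$. But $X$ acts by $0$ on the trivial module, so an infinitesimal-character / weight comparison (the homology version of Lemma \ref{lem:vanishing}, applied with the one-dimensional $X$-eigenvalue as the relevant ``character'' and using $Z(\fl_\C)$-finiteness from Step 2) forces $H_\ast(\fl'_\C,K\cap L';(\text{eigenspace with }c\ne 0))=0$. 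Summing over eigenspaces kills every $E_2$ (resp.\ $E_\infty$) contribution to $H_l$, so $H_l((\fp'\ltimes\fv)_{\C},K\cap L';V\otimes W)=0$, as desired.

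\textbf{Main obstacle.} The delicate point is Step 3: making precise the ``central character'' vanishing for relative Lie algebra homology of $\fl'_\C$ — which is a possibly disconnected/noncompact reductive-type algebra modulo $K\cap L'$ — when the only input is that a single central element $X$ acts without zero eigenvalue. One must be careful that $X$ acting with nonzero eigenvalue really does force vanishing of the full relative homology $H_\ast(\fl'_\C,K\cap L';-)$ and not merely of some $\fn'$-type piece; this should follow from the projection formula / the fact that $(\cdot)_{\fl'_\C}$ of a module on which a central $X$ acts invertibly is zero (since $X\cdot m=[X,m']$ is impossible to arrange, more precisely $X-c$ with $c\ne0$ is invertible so the coinvariants vanish), propagated up the standard complex, but the bookkeeping with the $K\cap L'$-relative version and with the several nested spectral sequences/Koszul filtrations is where the real work lies. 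A secondary nuisance is checking that all the spectral sequences genuinely converge and that ``subquotient'' bounds are legitimate in the relevant (here purely algebraic, topology-free) category of $(\fg_\C,K)$-modules, which is fine since $\pi\mapsto\pi_{\Kfin}$ lands in honest modules, but must be invoked cleanly.
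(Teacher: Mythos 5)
Your proposal is correct and follows essentially the same route as the paper: Hochschild--Serre for the ideal $\fv$ and then for $\fn$, reduction of the coefficients to composition factors of $\wedge^q\fv_{\C}\otimes W$ via the Koszul (standard) complex and long exact sequences, $Z(\fl_{\C})$-finiteness of $H_p(\fn_{\C},V)$ from Lemma \ref{lem:n-homology2}, and vanishing of each piece because the central element $X$ acts by a nonzero scalar, which the paper implements by citing Lemma \ref{lem:vanishing}. The only caveat is that your phrase ``subquotient of the homology'' should be replaced by the repeated long-exact-sequence bookkeeping (vanishing in all degrees $\le l$ for all relevant $(p,q)$), which is exactly the point you flag and the paper carries out.
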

\begin{proof}
By the Hochschild-Serre spectral sequence for relative Lie algebra homology, we have
\begin{align*}
H_l((\fp'\ltimes\fv)_{\C},K\cap L';V\otimes W)
\cong\bigoplus_{0\le q\le l}H_{l-q}(\fp'_{\C},K\cap L';V\otimes H_q(\fv_{\C},W)).
\end{align*}
The $(\fp'_{\C},K\cap L')$-module $H_q(\fv_{\C},W)$ can be written as a subquotient of $\wedge^q\fv_{\C}\otimes W$ by the standard complex.
By long exact sequences, it suffices to prove 
\[H_{l-q}(\fp'_{\C},K\cap L';V\otimes Z_1)=0\]
for any $q\in[0,l]$ and any composition factor $Z_1$ of the $(\fp'_{\C},K\cap L')$-module $\wedge^q\fv_{\C}\otimes W$.
Since $\fn$ acts on $\fv$ and $W$ nilpotently, the action of $\fn$ on $Z_1$ is trivial.
Again by the Hochschild-Serre spectral sequence, we see
\begin{align}\label{eq:zero}
H_{l-q}(\fp'_{\C},K\cap L';V\otimes Z_1)\cong\bigoplus_{0\le p\le l-q}H_{l-p-q}(\fl'_{\C},K\cap L';H_p(\fn_{\C},V)\otimes Z_1).
\end{align}
Fix $p\in [0,l-q]$. Since $V$ is $Z(\fg_{\C})$-finite, Lemma \ref{lem:n-homology2} shows that the $(\fl_{\C},L\cap K)$-module $H_p(\fn_{\C},V)$ admits a finite filtration whose graded pieces $Z_2$ have infinitesimal characters.
Since $\fz(\fl)$ acts on $Z_1$ by a character, the element $X$ in $\fl'\cap\fz(\fl)$ acts on $Z_2\otimes Z_1$ by a scalar $C$. By the assumption, we have $C\neq 0$ and obtain 
$H_{l-p-q}(\fl'_{\C},K\cap L';Z_2\otimes Z_1)=0$ by Lemma \ref{lem:vanishing}.
By long exact sequences, we see that \eqref{eq:zero} is zero, and the proof is complete.
\end{proof}

\section{Preliminaries on symmetric subgroups}\label{sec:4}
In this section, we prepare technical lemmas which will play important roles in Section \ref{sec:5}.

Let $G$ be a real reductive group, $H$ a symmetric subgroup of $G$, that is, there exists an involution $\sigma$ on $G$ with $G^{\sigma}_0\subset H\subset G^{\sigma}$.
Here $G^{\sigma}_0$ denotes the neutral component of $G^{\sigma}$. 

We take a Cartan involution $\theta$ on $\fg$ which commutes with $\sigma$, and a $\sigma$-stable maximally abelian subspace $\fs$ of $\fg^{-\theta}$.
Write $\Sigma=\Sigma(\fg,\fs)$ for the root system for $(\fg,\fs)$, and fix a positive system $\Sigma^+$ of $\Sigma$.
Write $N_{\min}$ for the unipotent radical of the minimal parabolic subgroup $P_{\min}$ corresponding to $\Sigma^+$.
Let $P=MAN$ be a standard parabolic subgroup and its Langlands decomposition. Put $L=MA$.

\begin{lemma}\label{lem:factor}
The map
\begin{align}\label{eq:factor}
(H\cap L)\ltimes (L\cap\sigma N)\to (H\cap P)/(H\cap N);\quad (l,\sigma n)\mapsto l\cdot\exp\circ(\id+\sigma)\circ\log(\sigma n)(H\cap N)
\end{align}
is an isomorphism of Nash groups.
Here $\log$ denotes the inverse of the exponential map from $\sigma\fn$ to $\sigma N$.
\end{lemma}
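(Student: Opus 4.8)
The plan is to identify $H\cap P$ with the $\sigma$-fixed points of the $\sigma$-stable group $P\cap\sigma P$ and to read the conclusion off the Levi decomposition of the latter. I would begin with the formalities: if $g\in P$ and $\sigma(g)=g$, then $g=\sigma(g)\in\sigma P$, so $H\cap P=H\cap(P\cap\sigma P)$, and likewise $H\cap L=H\cap(L\cap\sigma L)$ and $H\cap N=H\cap(N\cap\sigma N)$; since $G_0^\sigma\subseteq H$ and the $\sigma$-fixed points of a connected unipotent group are connected, also $H\cap N=(N\cap\sigma N)^\sigma$, and $H$ meets the unipotent radical of $P\cap\sigma P$ in all of its $\sigma$-fixed subgroup. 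I would also check once and for all that \eqref{eq:factor} is a homomorphism of Nash groups: it is well defined because $(\id+\sigma)(\fl\cap\sigma\fn)\subseteq\fg^\sigma\cap\fp=\fh\cap\fp$ consists of $\ad$-nilpotent vectors, so $\exp$ is polynomial on it; $H\cap L$ normalizes $L\cap\sigma N$, since for $l\in H\cap L$ and $m\in L\cap\sigma N$ one has $\sigma(lml^{-1})=l\,\sigma(m)\,l^{-1}\in N$, whence $lml^{-1}\in\sigma N$; and the homomorphism property then follows from $\Ad(l)\sigma=\sigma\Ad(l)$ for $l\in H$, together with the fact that $\exp\big((\id+\sigma)\log m\big)$ projects to $m$ under $P\to P/N\cong L$ (as $(\id+\sigma)\log m\equiv\log m \pmod{\fn}$). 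As a bijective homomorphism of Nash groups is an isomorphism, it remains only to prove bijectivity.

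The heart of the matter is the structure of $P\cap\sigma P$. Choose a $\sigma$-stable maximal torus $T$ of $G$ contained in $L\cap\sigma L$ (this exists because $L\cap\sigma L$ is $\sigma$-stable and of maximal rank in $G$). Over $\C$, $P\cap\sigma P$ is the intersection of two parabolic subgroups sharing the maximal torus $T$, hence connected; a root computation with respect to $T$ shows that $\fg_\alpha$ and $\fg_{-\alpha}$ both lie in $\fp\cap\sigma\fp$ exactly when $\fg_{\pm\alpha}\subseteq\fl\cap\sigma\fl$ (since $\fg_{\pm\alpha}\subseteq\fp$ forces $\fg_{\pm\alpha}\subseteq\fl$, and likewise for $\sigma\fp$). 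Thus $L\cap\sigma L$ is a Levi factor of $P\cap\sigma P$, and its nilradical $U_1$ has $\Lie(U_1)=(\fl\cap\sigma\fn)\oplus(\fn\cap\sigma\fl)\oplus(\fn\cap\sigma\fn)$. The decomposition $P\cap\sigma P=(L\cap\sigma L)\ltimes U_1$ is $\sigma$-stable ($\sigma$ preserves the characteristic subgroup $U_1$, the factor $L\cap\sigma L$, and the tautological splitting) and is defined over $\R$, hence holds for the real groups; taking $\sigma$-fixed points gives $H\cap P=(H\cap L)\ltimes U_1^\sigma$. Since $\sigma$ interchanges $\fl\cap\sigma\fn$ with $\fn\cap\sigma\fl=\sigma(\fl\cap\sigma\fn)$ and stabilizes $\fn\cap\sigma\fn$ with $(\fn\cap\sigma\fn)^\sigma=\fh\cap\fn$, one gets $\Lie(U_1^\sigma)=\fr:=(\fh\cap\fn)\oplus(\id+\sigma)(\fl\cap\sigma\fn)$, so $U_1^\sigma=\exp(\fr)$; and since $\fh\cap\fn$ is an ideal of $\fr$ with complementary subspace $(\id+\sigma)(\fl\cap\sigma\fn)$, one gets $U_1^\sigma=\exp\big((\id+\sigma)(\fl\cap\sigma\fn)\big)\cdot(H\cap N)$.

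It remains to assemble these. We obtain $H\cap P=(H\cap L)\cdot\exp\big((\id+\sigma)(\fl\cap\sigma\fn)\big)\cdot(H\cap N)$, and since $L\cap\sigma N=\exp(\fl\cap\sigma\fn)$, the right-hand side modulo $H\cap N$ is precisely the image of \eqref{eq:factor}; this gives surjectivity. For injectivity, compose \eqref{eq:factor} with $P\to P/N\cong L$: by the observation above it becomes $(l,\sigma n)\mapsto l\cdot\sigma n$, which is injective because $(H\cap L)\cap(L\cap\sigma N)=\{e\}$ — its Lie algebra $(\fh\cap\fl)\cap(\fl\cap\sigma\fn)$ vanishes, a $\sigma$-fixed vector in $\sigma\fn$ lying in $\fn$ while $\fl\cap\fn=0$, and $L\cap\sigma N$ is connected. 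I expect the single nonroutine step to be the structure of $P\cap\sigma P$ in the second paragraph — pinning down the $\sigma$-stable Levi factor $L\cap\sigma L$ and transporting the decomposition to real points — with everything else amounting to bookkeeping around the identities $H\cap X=H\cap(X\cap\sigma X)$ and the exponential map on nilpotent subalgebras.
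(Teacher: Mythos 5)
Your proposal is correct in substance and arrives at the same intermediate statements as the paper --- the decomposition $P\cap\sigma P=(L\cap\sigma L)\ltimes U_1$ (the paper's \eqref{eq:factor2}), its $\sigma$-fixed points $H\cap P=(H\cap L)\ltimes U_1^{\sigma}$, and the product description $U_1^{\sigma}=\exp\bigl((\id+\sigma)(\fl\cap\sigma\fn)\bigr)\cdot(H\cap N)$ --- but you prove the central decomposition by a genuinely different route. The paper stays entirely inside the real group: it writes $p=ln$, adjusts $n$ by $\exp(\fn\cap\sigma\fp)$ into $N\cap\sigma\bar{N}$, and uses conjugation by the split center $A$ of $L$ (twice, once for each factorization $p=ln=\sigma(l')\sigma(n')$) to kill the unipotent discrepancies; this is elementary and avoids complexification and covering issues for Wallach-type real reductive groups. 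You instead complexify and quote the structure theory of an intersection of two parabolic subgroups containing a common maximal torus, then descend to real points; that is a clean conceptual shortcut, but in this generality two of its inputs deserve a line each: (i) that $L$ and $\sigma L$ actually share a maximal torus --- true here because $P$ is standard and $\fs$ is $\sigma$-stable, so $\fz_{\fg}(\fs)\subset\fl\cap\sigma\fl$, but not automatic for an arbitrary pair of Levi subgroups, so your ``maximal rank'' assertion should be backed by exactly this observation; and (ii) that the algebraic decomposition transfers to the paper's $G$, which may only be a finite cover of an open subgroup of the real points. Two small local repairs to your write-up: the triviality of $(H\cap L)\cap(L\cap\sigma N)$ is not a consequence of ``zero Lie algebra plus connectedness of $L\cap\sigma N$'' (that does not exclude a discrete intersection); the correct one-line argument, which you essentially have, is at the group level --- a $\sigma$-fixed element of $\sigma N$ lies in $N$, and $L\cap N=\{e\}$. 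Also, $(\id+\sigma)(\fl\cap\sigma\fn)$ is only a linear complement to the ideal $\fh\cap\fn$ in $\Lie(U_1^{\sigma})$, not a subalgebra (the paper records $[(\id+\sigma)X,(\id+\sigma)Y]\in(\id+\sigma)[X,Y]+\fh\cap\fn$), so your product decomposition of $U_1^{\sigma}$ should be justified by the standard ``exponential of a linear complement times the normal subgroup'' fact for simply connected nilpotent groups; the paper instead exhibits the induced Lie algebra isomorphism $\fl\cap\sigma\fn\cong\Lie(U_1^{\sigma})/(\fh\cap\fn)$, which simultaneously hands it the Nash isomorphism \eqref{eq:factor}, whereas you obtain Nashness by the (also valid) route that a bijective Nash homomorphism of Nash groups is a Nash isomorphism.
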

\begin{proof}
We recall that the exponential map for a nilpotent subalgebra of $\fg$ and the logarithm map for a closed unipotent Nash subgroup of $G$ are Nash.
Hence the map \eqref{eq:factor} is Nash.

Consider a closed unipotent Nash subgroup $N'_{\min}:=(L\cap\sigma N_{\min})N$ of $G$.
Since $\fu:=(\fp\cap\sigma\fn)+(\fn\cap\sigma\fp)$ is a $\sigma$-stable subalgebra of $\fn'_{\min}$, the image $U:=\exp(\fu)$ of the exponential map is a closed normal $\sigma$-stable unipotent Nash subgroup of $P\cap\sigma P$.
Hence $P\cap\sigma P$ has a subgroup $(L\cap \sigma L)\ltimes U$.

Let us prove
\begin{align}\label{eq:factor2}
P\cap\sigma P=(L\cap \sigma L)\ltimes U.
\end{align}
It suffices to show $P\cap\sigma P\subset(L\cap \sigma L)\ltimes U$. Let $p\in P\cap\sigma P$. Then $p=ln$ for some $l\in L, n\in N$.
By multiplying an element in $\exp(\fn\cap\sigma\fp)$ from the right, we may assume $n\in N\cap\sigma\bar{N}$. By $\Ad(n^{-1})\fa=\Ad(p^{-1}l)\fa=\Ad(p^{-1})\fa\subset \sigma\fp$, we see $a^{-1}n^{-1}an\in \sigma P\cap\sigma\bar{N}=\{1\}$ for any $a\in A$. Hence $n=1$ and $p=l\in L\cap\sigma P$.

Write $p=\sigma(l')\sigma(n')$ for some $l'\in L, n'\in N$. 
Then from $p\in L$ we see that $\Ad(\sigma(n'))\fa=\Ad(\sigma(l')^{-1}p)\fa=\Ad(\sigma(l')^{-1})\fa\subset\sigma\fl$. Hence $a\sigma(n')a^{-1}\sigma(n')^{-1}\in\sigma L\cap\sigma N=\{1\}$ for any $a\in A$, and $\sigma(n')$ belongs to $L\cap\sigma(N)=\exp(\fl\cap\sigma\fn)\subset U$. Therefore $\sigma(l')=p\sigma(n')^{-1}$ belongs to $L\cap\sigma L$ and $p=\sigma(l')\sigma(n')$ belongs to $(L\cap\sigma L)\ltimes U$, which proves \eqref{eq:factor2}.

Since the factorization \eqref{eq:factor2} is $\sigma$-stable, we obtain
\begin{align}\label{eq:factor3}
H\cap P=(H\cap L)\ltimes U^{\sigma}
\end{align}
by taking intersections with $H$.
The Lie algebra of $U^{\sigma}$ equals $\fu^{\sigma}=(\id+\sigma)(\fl\cap\sigma\fn)+(\fh\cap\fn)$ and has an ideal $\fh\cap\fn$. By $[(\id+\sigma)X,(\id+\sigma)Y]\in(\id+\sigma)[X,Y]+(\fh\cap\fn)$ for any $X,Y\in\fl\cap\sigma\fn$, the linear map
\[
\fl\cap\sigma\fn\to\fu^{\sigma}/(\fh\cap\fn);X\mapsto (\id+\sigma)X+(\fh\cap\fn)
\]
is an isomorphism of nilpotent Lie algebras.
It induces an isomorphism 
\begin{align}\label{eq:composite}
L\cap\sigma N\xrightarrow{\cong} U^{\sigma}/(H\cap N);\sigma n\mapsto \exp\circ(\id+\sigma)\circ\log(\sigma n)\cdot (H\cap N)
\end{align}
of $1$-connected nilpotent Lie groups, which is Nash as discussed at the beginning of the proof.
The assertion now follows from \eqref{eq:factor3} and \eqref{eq:composite}.
\end{proof}

For $\alpha\in\Sigma$, we write $\fg_{\alpha}$ for the root space of root $\alpha$.

Let us recall the definition of the associated (resp. the dual) symmetric pair of $(\fg,\fh)$. 
\emph{The associated symmetric pair} of $(\fg,\fh)$ is the symmetric pair $(\fg^a,\fh^a):=(\fg,\fg^{\theta\sigma})$. Recall that $\fh^a$ is stable under the Cartan involution $\theta$ of $\fg^a$.
Write $\fg_{\C}$ for the complexification of $\fg$ and extend $\theta$ and $\sigma$ to $\fg_{\C}$ so that they are complex linear. 
Then $\fg^d:=\fg^{\sigma\theta}+\sqrt{-1}\fg^{-\sigma\theta}$ is a real form of $\fg_{\C}$, and $\fh^d:=(\fg^d)^{\theta}$ is stable under the Cartan involution $\sigma$ of $\fg^d$. 
the symmetric pair $(\fg^d,\fh^d)$ is called \emph{the dual symmetric pair} of $(\fg,\fh)$. 

\begin{lemma}\label{lem:extension}
Assume that $(\fg,\fh)$ is one of the following symmetric pairs:
\begin{enumerate}
\item
$(\fg'\oplus\fg',\fg')$ where $\fg'$ denotes a real simple Lie algebra,
\item
$(\mathfrak{sl}_{2n}(\R),\mathfrak{sl}_n(\C)+\sqrt{-1}\R)$, or
\item
$(\fe_{6(6)},\mathfrak{sl}_3(\H)\oplus\mathfrak{su}(2))$.
\end{enumerate}
Then $\Sigma^{\sigma}$ is empty and $\fg_{\alpha}\subset\fg^{\theta\sigma}$ for any $\alpha\in\Sigma^{-\sigma}$ (see Section \ref{sec:intro-3} for the definition of $\Sigma^{\sigma}, \Sigma^{-\sigma}$).
\end{lemma}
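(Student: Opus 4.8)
The plan is to verify both assertions case by case, after a uniform reformulation. Set $\tau:=\theta\sigma$; it is an involution of $\fg$ commuting with $\theta$ and preserving $\fs$, with $\fg^{\tau}=\fg^{\theta\sigma}=\fh^{a}$ the associated subalgebra, and $\theta$ is a Cartan involution for $(\fg,\fg^{\tau})$ as well. Since $\theta$ acts by $-1$ on $\fs$, we have $\tau^{*}=-\sigma^{*}$ on $\fs^{\vee}$, so $\Sigma^{\sigma}=\{\alpha\in\Sigma:\tau^{*}\alpha=-\alpha\}$ and $\Sigma^{-\sigma}=\{\alpha\in\Sigma:\tau^{*}\alpha=\alpha\}$; hence the lemma is equivalent to: (A) $\tau^{*}$ negates no root of $\Sigma$, and (B) $\tau$ acts by the identity on $\fg_{\alpha}$ whenever $\tau^{*}\alpha=\alpha$. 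Both (A) and (B) are unchanged if $\fg$ is replaced by its semisimple part, or if $(\theta,\sigma,\fs)$ is replaced by a conjugate, so in each case we may work with a convenient model. In case (1) this is immediate: take $\theta=(\theta',\theta')$ and $\fs=\fs'\oplus\fs'$ with $\fs'\subset\fg'^{-\theta'}$ maximal abelian; then $\sigma$ is the swap of the two factors, $\tau$ acts on $\fs$ by $(X,Y)\mapsto(-Y,-X)$, and on $\Sigma=(\Sigma'\times\{0\})\sqcup(\{0\}\times\Sigma')$ one has $\tau^{*}(\alpha,0)=(0,-\alpha)$, which is neither $(\alpha,0)$ nor $-(\alpha,0)$ as $\alpha\neq0$; thus $\Sigma^{\sigma}=\Sigma^{-\sigma}=\emptyset$ and (A), (B) hold vacuously.

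For case (2) I would use $\fg=\mathfrak{sl}_{2n}(\R)$, $\theta(X)=-{}^{t}X$, and $\sigma=\Ad(J)$ with $J=\begin{pmatrix}0&I_{n}\\-I_{n}&0\end{pmatrix}$, so that $\fg^{\sigma}=\mathfrak{sl}_{n}(\C)+\sqrt{-1}\R\cdot\mathrm{id}$ and $\theta\sigma=\sigma\theta$ (using ${}^{t}J=J^{-1}=-J$). Take $\fs$ to be the traceless real diagonal matrices; it is maximal abelian in $\fg^{-\theta}$, it is $\sigma$-stable, and $\sigma^{*}$ is induced on $\Sigma=\{e_{i}-e_{j}\}$ by the index shift $e_{i}\mapsto e_{i+n}$ (indices mod $2n$). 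One checks at once that $\Sigma^{\sigma}=\emptyset$ and $\Sigma^{-\sigma}=\{\pm(e_{i}-e_{i+n}):1\le i\le n\}$, and for $\alpha=e_{i}-e_{i+n}$ a direct computation gives $\theta\sigma(E_{i,i+n})=-J\,E_{i+n,i}\,J^{-1}=E_{i,i+n}$, so $\fg_{\alpha}\subset\fg^{\theta\sigma}$. (Conceptually, $\fg^{\tau}=\mathfrak{sp}_{2n}(\R)$ and the statement reflects the folding $A_{2n-1}\rightsquigarrow C_{n}$: $\tau^{*}$ is a diagram automorphism, hence negates no root, and its fixed roots are the long roots of $C_{n}$, whose root spaces lie in $\mathfrak{sp}_{2n}(\R)$ by construction. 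The case $n=1$, namely $(\mathfrak{sl}_{2}(\R),\mathfrak{so}(2))$, is the one used in the proof of Theorem~\ref{thm:main}.)

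For case (3), $\fg=\fe_{6(6)}$ is split, so a $\sigma$-stable maximally split Cartan subalgebra $\fs$ is maximal abelian in $\fg^{-\theta}$ and $\Sigma$ is the full root system of type $E_{6}$. A dimension count identifies the associated pair: using $\dim\fg=78$, $\dim\fk=36$, $\dim\fh=38$, the maximal compact subalgebra $\mathfrak{sp}(3)\oplus\mathfrak{su}(2)$ of $\fh=\mathfrak{sl}_{3}(\H)\oplus\mathfrak{su}(2)$ (of dimension $24$), and $\fg^{\theta\sigma}=(\fh\cap\fk)\oplus(\fg^{-\theta}\cap\fg^{-\sigma})$, one gets $\dim\fg^{\theta\sigma}=24+(42-14)=52$ with maximal compact subalgebra of dimension $24$; hence $\fg^{\tau}=\mathfrak{f}_{4(4)}$, the split real form of $F_{4}$ (alternatively, invoke the classification of reductive symmetric pairs). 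One may therefore take $\fs$ to be a split Cartan, $\theta$ the Chevalley involution, and $\tau$ the standard diagram automorphism $\varpi$ of $E_{6}$ (permuting the simple root vectors), whose fixed algebra is the standard folded $\mathfrak{f}_{4(4)}$. Then (A) holds because a diagram automorphism carries positive roots to positive roots and so negates none; and for (B), in the folding $E_{6}\rightsquigarrow F_{4}$ the $\varpi$-fixed roots of $E_{6}$ are exactly the long roots of $F_{4}$, each root space being $1$-dimensional in $\fg$ and, being the full weight space of its weight, equal to the $1$-dimensional root space of $\mathfrak{f}_{4(4)}\subset\fg^{\tau}$; thus $\fg_{\alpha}\subset\fg^{\theta\sigma}$ for every $\alpha\in\Sigma^{-\sigma}$. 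The main obstacle is case (3): the real content is identifying the associated pair as $(\fe_{6(6)},\mathfrak{f}_{4(4)})$, normalizing $\theta\sigma$ to the $E_{6}$ diagram automorphism while keeping $\theta$ a commuting Cartan involution and $\fs$ maximally split, and matching $\varpi$-fixed $E_{6}$-roots with long $F_{4}$-roots at the level of root spaces; a direct computation in a Chevalley basis of $\fe_{6}$ would also work but is much more laborious, whereas passing to the folding $E_{6}\rightsquigarrow F_{4}$ keeps it clean.
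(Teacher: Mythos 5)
Your reduction to the two statements (A) ($\tau^{*}=(\theta\sigma)^{*}$ negates no restricted root) and (B) ($\tau$ fixes $\fg_{\alpha}$ pointwise when $\tau^{*}\alpha=\alpha$) is correct, and the verifications you carry out on your chosen models are fine: case (1) is immediate, the matrix computation in case (2) is essentially the paper's, and your folding argument $E_{6}\to F_{4}$ in case (3) is a legitimate alternative to the paper's appeal to Yokota's explicit root data for $\fe_{6(6)}$. The gap is the sentence ``Both (A) and (B) are unchanged \dots if $(\theta,\sigma,\fs)$ is replaced by a conjugate, so in each case we may work with a convenient model.'' The lemma is asserted for an \emph{arbitrary} $\sigma$-stable maximally abelian subspace $\fs\subset\fg^{-\theta}$, so passing to your model requires knowing that every admissible triple is conjugate to the model triple; after normalizing $(\theta,\sigma)$ (which is standard), this amounts to: all $\sigma$-stable maximally abelian subspaces of $\fg^{-\theta}$ are conjugate under automorphisms commuting with $\theta$ and $\sigma$, e.g.\ under $(K\cap H)_0$. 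That is false for general reductive symmetric pairs: such subspaces typically fall into several $(K\cap H)_0$-classes, distinguished by $\dim\fs^{\sigma}$, and the conclusion is sensitive to the class --- already for $(\sl_2(\C),\sl_2(\R))$ (a pair of the form $(\fg'_{\C},\fg')$, not on the lemma's list) a line in $\fp^{\sigma}$ gives $\Sigma^{\sigma}=\Sigma\neq\emptyset$ while the line $\fp^{-\sigma}$ gives $\Sigma^{\sigma}=\emptyset$. So the reduction you assert is exactly the nontrivial first step, not a formality; it is also genuinely used downstream, since in the proofs of Lemma \ref{lem:rho} and Theorem \ref{thm:bound} the lemma is applied to the involutions $\sigma_j=\Psi(g_j)^{-1}\circ\sigma\circ\Psi(g_j)$, all preserving one fixed $\fs$, and conjugating $(\theta,\sigma_j,\fs)$ back to the standard $\sigma$ moves $\fs$ to $\Ad(g_j)\fs$, so you cannot arrange the standard $\sigma$ and your favourite $\fs$ simultaneously without a conjugacy statement.

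This is precisely what the paper proves before doing any case check: using the Oshima--Sekiguchi description of the associated pair and their tables of restricted root systems, it verifies that for the three families the split rank of $(\fg,\fh)$ plus that of $(\fg^{a},\fh^{a})$ equals the real rank of $\fg$, and then invokes \cite[Lemma 2.4]{OS84} to conclude that all $\sigma$-stable maximally abelian subspaces of $\fg^{-\theta}$ are conjugate under $(K\cap H)_0$; only after that does it compute with a particular $\fs$ (reducing, in its formulation, to the centralizer conditions \eqref{eq:centralizer}). Only in your case (1) is the reduction automatic, since any $\sigma$-stable maximal abelian subspace of $\fp'\oplus\fp'$ is forced to be of the form $\fa'\oplus\fa'$; for cases (2) and (3) you must supply the rank-additivity/conjugacy step (or an equivalent argument valid for every admissible $\fs$). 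The same omission is hiding inside your case (3) phrase about ``normalizing $\theta\sigma$ to the $E_6$ diagram automorphism while keeping $\theta$ a commuting Cartan involution and $\fs$ maximally split'': the identification $\fg^{\theta\sigma}\cong\ff_{4(4)}$ and the normalization of the pair $(\theta,\theta\sigma)$ are fine, but moving the given $\fs$ to the split Cartan adapted to the diagram automorphism is again the unproved conjugacy claim. With that step added, your argument closes, and the folding picture for case (3) is an attractive, more self-contained substitute for the citation of Yokota's tables.
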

\begin{proof}
It suffices to show 
\begin{align}\label{eq:centralizer}
\fz_{\fg}(\fs^{-\sigma})=\fz_{\fg}(\fs),\quad\fz_{\fg}(\fs^{\sigma})\subset \fh^a+\fz_{\fg}(\fs).
\end{align}

By the assumption on $(\fg,\fh)$, the description of $(\fg^a,\fh^a)$ \cite[Lemma (1.13.1) and (1.16)]{OS84} and data of the restricted root systems of $(\fg,\fh)$ and $(\fg^a,\fh^a)$ \cite[(6.8) and Table V]{OS84}, the sum of the split rank of $(\fg,\fh)$ and that of $(\fg^a,\fh^a)$ equals the real rank of $\fg$.
Hence maximally abelian $\sigma$-stable subspaces of $\fg^{-\theta}$ are conjugate under the neutral component of $K\cap H$ by \cite[Lemma 2.4]{OS84}.
Hence we may take a particular choice of $\fs$.

We first consider the case $(\fg,\fh)=(\fg'\oplus\fg',\fg')$, where $\fg'$ denotes a real simple Lie algebra.
In this case, $\fs=\fs'\oplus\fs'$ for some maximally abelian subspace $\fs'$ of $(\fg')^{-\theta}$.
Then we see that $\fz_{\fg}(\fs^{\sigma})$ and $\fz_{\fg}(\fs^{-\sigma})$ are equal to $\fz_{\fg}(\fs)$, and \eqref{eq:centralizer} follows.

We next consider the case $(\fg,\fh)=(\sl_{2n}(\R),\sl_n(\C)+\sqrt{-1}\R)$.
Take a Cartan involution $\theta$ on $\fg$ given by $\theta(Y)=-{}^t{Y}$ for $Y\in \fg$.
Fix a complex structure $J=\begin{pmatrix}0&-J_n\\J_n&0\end{pmatrix}$ on $\R^{2n}$, where $J_n$ denotes the anti-diagonal $n$-by-$n$ matrix with anti-diagonal entries one. Write $\sl_n(\C)$ as the subalgebra of $\sl_{2n}(\R)$ commuting with the complex structure.
We choose $\fs$ to be the subalgebra consisting of diagonal matrices with trace zero. Then by \[\fs^{-\sigma}=\Set{\diag(h_1,\ldots,h_n,-h_n,\ldots,-h_1)|\sum_{1\le i\le n} h_i=0},\] we see $\fz_{\fg}(\fs^{-\sigma})=\fs=\fz_{\fg}(\fs)$.
Moreover, we have $\fz_{\fg}(\fs^{\sigma})=\{\text{anti-diagonal matrices}\}+\fs\subset\fh^a+\fs$ since anti-diagonal matrices belong to $\fh^a$.

We finally consider the case $(\fg,\fh)=(\fe_{6(6)},\mathfrak{sl}_3(\H)\oplus\mathfrak{su}(2))$.
In this case, we see $\fh^a=\ff_{4(4)}$ by \cite[(1.16)]{OS84}.
Take $\fs_{\C}$ as in \cite[p. 78]{Yok09}. Then the description of roots \cite[Theorem 3.6.4]{Yok09} implies \eqref{eq:centralizer}, which completes the proof.
\end{proof}

For a closed subgroup $R$ of $G$, define 
\[
\Sigma_R:=\Set{\alpha\in\Sigma|\fg_{\alpha}\subset\fr}.
\]
Since $\fl\cap\sigma\fn$ is nilpotent, the modular function $\delta_{L\cap\sigma N}$ of $L\cap\sigma N$ is trivial.
Via the isomorphism \eqref{eq:factor}, we extend the domain of $\delta_{L\cap\sigma N}$ to $H\cap P$ and regard it as the modular function $\delta_{(H\cap P)/(H\cap L)}$.
\begin{lemma}\label{lem:rho}
Assume that any simple component of the symmetric pair $(\fg,\fh)$ is written as 
\begin{enumerate}
\item\label{ind:rho-first} 
$(\fg'\oplus\fg',\fg')$ or $(\fg'_{\C},\fg')$ for some real simple Lie algebra $\fg'$,
\item\label{ind:rho-second} 
$(\mathfrak{sl}_{2n}(\R),\mathfrak{sl}_n(\C)+\sqrt{-1}\R)$ or $(\mathfrak{sl}_n(\H),\mathfrak{sl}_n(\C)+\sqrt{-1}\R)$, or
\item\label{ind:rho-third} 
$(\fe_{6(6)},\mathfrak{sl}_3(\H)\oplus\mathfrak{su}(2))$ or $(\fe_{6(-26)},\mathfrak{sl}_3(\H)\oplus\mathfrak{su}(2))$.
\end{enumerate}

Then $\delta_P^{1/2}\cdot\delta_{H\cap P}^{-1}=\delta_{L\cap\sigma N}^{-1/2}$ as characters of $H\cap P$, and we have $\fs^{\sigma}\cap[\fg,\fg]=\fs^{\sigma}\cap[\fh,\fh]$.
\end{lemma}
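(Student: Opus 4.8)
The statement consists of two independent assertions; the one about $\fs^{\sigma}$ is short, so I would dispose of it first. In cases \ref{ind:rho-first} and \ref{ind:rho-third} both $\fg$ and $\fh$ are semisimple, so $[\fg,\fg]=\fg$, $[\fh,\fh]=\fh$, and both sides of $\fs^{\sigma}\cap[\fg,\fg]=\fs^{\sigma}\cap[\fh,\fh]$ equal $\fs^{\sigma}$; there is nothing to prove. In case \ref{ind:rho-second} the ambient algebra $\fg$ (which is $\sl_{2n}(\R)$ or $\sl_{n}(\H)$) is simple, so the left-hand side is $\fs^{\sigma}$ and it suffices to show $\fs^{\sigma}\subseteq[\fh,\fh]=\sl_{n}(\C)$, i.e.\ that $\fs^{\sigma}$ meets the one-dimensional center $\fz(\fh)=\sqrt{-1}\,\R\cdot\id$ trivially. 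But a generator of $\fz(\fh)$ is a scalar multiple of the complex structure $J$ used to realize $\fh$, as in the proof of Lemma \ref{lem:extension}, and with the Cartan involution $\theta(Y)=-Y^{\ast}$ one checks $\theta(J)=J$; hence $J\in\fk$, so $J\notin\fg^{-\theta}\supseteq\fs$, and in particular $\fz(\fh)\cap\fs^{\sigma}=0$. This gives $\fs^{\sigma}\subseteq[\fh,\fh]$, hence the asserted equality.

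For the character identity I would first reduce it to an identity of differentials on $\fs^{\sigma}$. Each of $\delta_{P}^{1/2}\cdot\delta_{H\cap P}^{-1}$ and $\delta_{L\cap\sigma N}^{-1/2}$ is an $\R_{>0}$-valued continuous character of $H\cap P$, so it is determined by its differential; and that differential, being of the form $\tfrac12\tr\ad(\cdot)|_{V}$ for a finite-dimensional module $V$, vanishes on the unipotent radical $\fu^{\sigma}$ of $\fh\cap\fp$, on $[\fh\cap\fl,\fh\cap\fl]$, and---as the differential of a positive character of the reductive group $H\cap L$---on a maximal compact subalgebra of $\fh\cap\fl$, so it is determined by its restriction to $\fs^{\sigma}$, provided $\fs^{\sigma}$ is a maximal abelian subspace of $\fh\cap\fg^{-\theta}$ (which can be arranged under our hypotheses using the conjugacy of $\sigma$-stable maximal abelian subspaces of $\fg^{-\theta}$, as in the proof of Lemma \ref{lem:extension}; then $\fz(\fh\cap\fl)\cap\fg^{-\theta}\subseteq\fs^{\sigma}$). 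Lemma \ref{lem:factor} gives $\fh\cap\fp=(\fh\cap\fl)\ltimes\fu^{\sigma}$ and, at the Lie algebra level (see its proof), $\fh\cap\fn$ is an ideal of $\fu^{\sigma}$ with $\fu^{\sigma}/(\fh\cap\fn)\cong\fl\cap\sigma\fn$; since $\tr\ad(X)|_{\fh\cap\fl}=0$ for $X$ in the reductive algebra $\fh\cap\fl$, this yields $\rho_{\fh\cap\fp}=\rho_{\fh\cap\fn}+\rho_{\fl\cap\sigma\fn}$ on $\fs^{\sigma}$. Interpreting $\delta_{L\cap\sigma N}$ as the character of $H\cap P$ obtained from $\fl\cap\sigma\fn$ via this isomorphism, the target identity becomes
\[
\rho_{\fn}=2\rho_{\fh\cap\fn}+\rho_{\fl\cap\sigma\fn}\qquad\text{on }\fs^{\sigma}.
\]

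To establish this I would exploit that $\fs$ is $\sigma$-stable, so $\sigma$ permutes the root spaces $\fg_{\alpha}$ of $(\fg,\fs)$; then $\sigma\fn,\sigma\fl,\sigma\bar{\fn}$ are $\ad(\fs)$-stable, and intersecting $\fg=\sigma\fn\oplus\sigma\fl\oplus\sigma\bar{\fn}$ with $\fn$ gives $\fn=(\fn\cap\sigma\fn)\oplus(\fn\cap\sigma\fl)\oplus(\fn\cap\sigma\bar{\fn})$. Now I compare $\fs^{\sigma}$-weights, using $\alpha|_{\fs^{\sigma}}=(\sigma\alpha)|_{\fs^{\sigma}}$ for every root $\alpha$ and $\alpha|_{\fs^{\sigma}}=0$ for $\alpha\in\Sigma^{-\sigma}$: (i) $\sigma$ acts on the root-set of $\fn\cap\sigma\fn$, and since $\Sigma^{\sigma}=\emptyset$---which is Lemma \ref{lem:extension} for the pairs listed there, and the identical computation with the data of \cite{OS84} and \cite{Yok09} for the remaining mutually dual pairs $(\fg'_{\C},\fg')$, $(\sl_{n}(\H),\sl_{n}(\C)+\sqrt{-1}\R)$, $(\fe_{6(-26)},\sl_{3}(\H)\oplus\mathfrak{su}(2))$---this action has only $2$-element orbits $\{\alpha,\sigma\alpha\}$ with $\sigma\alpha\neq\pm\alpha$, so the eigenspaces $(\fn\cap\sigma\fn)^{\pm\sigma}$ have equal $\fs^{\sigma}$-weight sums, while $(\fn\cap\sigma\fn)^{\sigma}=\fh\cap\fn$; (ii) on $\fn\cap\sigma\bar{\fn}$ the root spaces pair up under $\alpha\mapsto-\sigma\alpha$, the fixed roots lying in $\Sigma^{-\sigma}$ (hence contributing $0$ to the $\fs^{\sigma}$-weight sum) and each pair $\{\fg_{\alpha},\fg_{-\sigma\alpha}\}$ contributing $\alpha|_{\fs^{\sigma}}-\alpha|_{\fs^{\sigma}}=0$, so $\fn\cap\sigma\bar{\fn}$ has zero $\fs^{\sigma}$-weight sum; (iii) $\sigma$ carries $\fn\cap\sigma\fl$ isomorphically onto $\fl\cap\sigma\fn$, preserving $\fs^{\sigma}$-weights. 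Summing the three contributions gives $2\rho_{\fn}=4\rho_{\fh\cap\fn}+2\rho_{\fl\cap\sigma\fn}$ on $\fs^{\sigma}$, which is the required identity.

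The step I expect to be most delicate is the weight bookkeeping in the previous paragraph together with its two supporting reductions: that the characters at issue are detected on $\fs^{\sigma}$ (which forces the careful choice of $\fs$ making $\fs^{\sigma}$ maximal in $\fh\cap\fg^{-\theta}$), and the verification that $\Sigma^{\sigma}=\emptyset$ for the three dual pairs not literally appearing in Lemma \ref{lem:extension}. Both are routine given the structure theory of \cite{OS84} and \cite{Yok09}, but this is precisely where the hypotheses on $(\fg,\fh)$ enter. One must also first pin down the meaning of $\delta_{L\cap\sigma N}$ as a character of $H\cap P$---via Lemma \ref{lem:factor} and the ``modular function of the semidirect product $(\fh\cap\fl)\ltimes(\fl\cap\sigma\fn)$ relative to $\fh\cap\fl$''---after which the equivalence with the displayed $\rho$-identity is a direct unwinding.
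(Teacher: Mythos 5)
Your treatment of the second assertion (via $\fz(\fh)\subseteq\fk$ and $\theta$-stability of $\fh=[\fh,\fh]\oplus\fz(\fh)$; note that $\fs^\sigma\cap\fz(\fh)=0$ alone is not literally equivalent to $\fs^\sigma\subseteq[\fh,\fh]$) is essentially fine and is a more direct route than the paper's, which deduces it from $\Sigma(\fg,\fj)=\Sigma(\fh,\fj)$. The character identity, however, has a genuine gap, located exactly where you flagged the argument as ``routine''. Your step (i) rests on the claim that $\Sigma^{\sigma}=\emptyset$ for \emph{all} pairs in the hypothesis, extending Lemma \ref{lem:extension} to the dual pairs $(\fg'_{\C},\fg')$, $(\sl_n(\H),\sl_n(\C)+\sqrt{-1}\R)$, $(\fe_{6(-26)},\sl_3(\H)\oplus\mathfrak{su}(2))$ by ``the identical computation''. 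That claim is false: for $(\fg,\fh)=(\sl_n(\H),\sl_n(\C)+\sqrt{-1}\R)$ with $\sigma=\Psi(\bfi I_n)$, $\theta(Y)=-{}^t\bar{Y}$ and $\fs$ the real diagonal traceless matrices --- precisely the configuration used in Section \ref{sec:5-4} when $D=\H$ --- $\sigma$ fixes $\fs$ pointwise, so $\Sigma^{\sigma}=\Sigma$; likewise $(\sl_2(\C),\sl_2(\R))$ with $\fs=\R\,\diag(1,-1)$. Once $\sigma$-fixed roots exist, the two-element-orbit pairing on the roots of $\fn\cap\sigma\fn$ collapses, and what is needed is $\dim\fg_{\alpha}^{\sigma}=\dim\fg_{\alpha}^{-\sigma}$ for each fixed root $\alpha$. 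This multiplicity identity is the actual crux of the lemma; the paper proves it by passing to the weights of a maximal abelian $\fj\subseteq\fh\cap\fg^{-\theta}$ containing $\fs^{\sigma}$ and transporting the statement through the associated--dual--associated correspondence, so that only the three pairs of Lemma \ref{lem:extension} need to be checked. Your proposal has no substitute for this step, and it is exactly the $D=\H$ case of the main theorem that it would fail to cover.

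A second, related flaw is the supporting reduction: you propose to ``arrange'' that $\fs^{\sigma}$ is maximal abelian in $\fh\cap\fg^{-\theta}$ by invoking conjugacy of $\sigma$-stable maximally abelian subspaces of $\fg^{-\theta}$ ``as in the proof of Lemma \ref{lem:extension}''. That conjugacy is established there only for the three listed pairs, where the split ranks of $(\fg,\fh)$ and $(\fg,\fh^{a})$ add up to the real rank; for the dual pairs it fails. For instance, in $(\sl_2(\C),\sl_2(\R))$ both $\R\,\diag(1,-1)$ and $\R\begin{pmatrix}0&\sqrt{-1}\\-\sqrt{-1}&0\end{pmatrix}$ are $\sigma$-stable maximally abelian in $\fg^{-\theta}$, but their $\sigma$-fixed parts have different dimensions, so they are not $(K\cap H)_0$-conjugate; the same phenomenon occurs for $\sl_n(\H)$. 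Moreover $\fs$ and the standard $P$ are part of the given data of the lemma, so you cannot replace $\fs$ by a preferred representative in any case. Consequently your justification that the two characters of $H\cap P$ are detected on $\fs^{\sigma}$ (via $\fz(\fh\cap\fl)\cap\fg^{-\theta}\subseteq\fs^{\sigma}$) does not go through as written; the paper also reduces to $\fs^{\sigma}$, but by a different argument that does not use this conjugacy. In short, the skeleton of your weight bookkeeping matches the paper's (its $\Sigma_3,\Sigma_4$ computations), but the fixed-root multiplicity statement and the reduction to $\fs^{\sigma}$ --- the two points where the hypothesis on $(\fg,\fh)$ genuinely enters --- are not established.
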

\begin{proof}
From \eqref{eq:factor3}, we have $\fh\cap\fp=(\fh\cap\fl)\ltimes\fu^{\sigma}$.
The adjoint actions of the subalgebra $\fu^{\sigma}$ on $\fp,\fh\cap\fp$ and $\fu^{\sigma}/(\fh\cap\fn)\cong\fl\cap\sigma\fn$ are all nilpotent.
Since $\fh\cap\fl$ is reductive, we see $\fh\cap\fl=\fs^{\sigma}+[\fh\cap\fl,\fh\cap\fl]$.
From these observations, it suffices to prove 
$\rho_{\fp}-2\rho_{\fh\cap\fp}=-\rho_{\fl\cap\sigma\fn}$ as linear functionals of $\fs^{\sigma}$ (see Section \ref{sec:intro-3} for the definition of $\rho_{\ast}$).
Set 
\[\Sigma_{1}:=\Sigma_P\cap\Sigma^{\sigma},\quad
\Sigma_{2}:=(\Sigma_P\cap\sigma\Sigma_P)\setminus\Sigma_1,\quad
\Sigma_{3}:=\Sigma_L\cap\sigma\Sigma_{\bar{N}},\quad
\Sigma_{4}:=\Sigma_N\cap\sigma\Sigma_{\bar{N}}.
\]
We see $\Sigma_P=\bigsqcup_{1\le i\le 4}\Sigma_{i}$ and
\begin{align*}
\rho_{\fp}&=\frac{1}{2}\sum_{1\le i\le 4}\sum_{\alpha\in\Sigma_i}\dim\fg_{\alpha}\cdot\alpha|_{\fs^{\sigma}},\quad\quad
\rho_{\fl\cap\sigma\fn}=-\frac{1}{2}\sum_{\alpha\in\Sigma_3}\dim\fg_{\alpha}\cdot\alpha|_{\fs^{\sigma}},\\
\rho_{\fh\cap\fp}&=\frac{1}{2}\sum_{\alpha\in\Sigma_1}\dim \fg_{\alpha}^{\sigma_j}\cdot\alpha|_{\fs^{\sigma}}+\frac{1}{2}\sum_{\alpha\in\Sigma_2}\frac{\dim\fg_{\alpha}}{2}\cdot\alpha|_{\fs^{\sigma}}.
\end{align*}
Since $\Sigma_4$ is $(-\sigma)$-stable, we see $\sum_{\alpha\in\Sigma_4}\dim\fg_{\alpha}\cdot\alpha|_{\fs^{\sigma}}=0$.
Hence we have
\[
\rho_{\fp}-2\rho_{\fh\cap\fp}+\rho_{\fl\cap\sigma\fn}
=\frac{1}{2}\sum_{\alpha\in\Sigma_{1}}(\dim\fg_{\alpha}^{-\sigma}-\dim\fg_{\alpha}^{\sigma})\cdot\alpha|_{\fs^{\sigma}}.
\]
Therefore the first half of the assertion reduces to proving $\dim\fg_{\alpha}^{\sigma}=\dim\fg_{\alpha}^{-\sigma}$ for $\alpha\in\Sigma^{\sigma}$.
It also reduces to proving $\dim\fg_{\alpha}^{\sigma}=\dim\fg_{\alpha}^{-\sigma}$ for any nonzero $\fs^{\sigma}$-weights in $\fg$.

We take a maximally abelian subspace $\fj$ of $\fh\cap\fg^{-\theta}$ containing $\fs^{\sigma}$.
Then the set $\Sigma(\fg,\fj)$ of nonzero $\fj$-weights in $\fg$ is naturally identified with the restricted root system for $(\fg^a,\fh^a)$ (cf. \cite[Theorem 2.11]{OS84}).
Let us prove the following stronger assertion: $\dim\fg_{\alpha}^{\sigma}=\dim\fg_{\alpha}^{-\sigma}$ for $\alpha\in\Sigma(\fg,\fj)$.

We may assume that $\fg$ is simple. 
Recall the definitions of the associated $(\fg^a,\fh^a)$ and the dual $(\fg^d,\fh^d)$ symmetric pair of a symmetric pair $(\fg,\fh)$ before Lemma \ref{lem:extension}.
For each case \ref{ind:rho-first}--\ref{ind:rho-third}, a symmetric pair is written as the associated $(\fg^{ada},\fh^{ada})=(\fg^{ada},\fh)$ of the dual of the associated of the other symmetric pair $(\fg,\fh)$.
Since the dimension of $\fg_{\alpha}^{\pm\sigma}$ for $\alpha\in\Sigma(\fg,\fj)$ does not change if we replace $(\fg,\fh)$ by $(\fg^{ada},\fh^{ada})$, we only need to consider the case $(\fg,\fh)=(\fg'\oplus\fg',\fg'), (\mathfrak{sl}_{2n}(\R),\mathfrak{sl}_n(\C)+\sqrt{-1}\R)$, or $(\fe_{6(6)},\mathfrak{sl}_3(\H)\oplus\mathfrak{su}(2))$.
In this case, the proof of Lemma \ref{lem:extension} shows $\fj=\fs^{\sigma}$, and $\Sigma^{\sigma}=\emptyset$ implies $\dim\fg_{\alpha}^{\sigma}=\dim\fg_{\alpha}^{-\sigma}$ for any $\alpha\in\Sigma(\fg,\fs^{\sigma})$.

Moreover, $\dim\fg_{\alpha}^{\sigma}=\dim\fg_{\alpha}^{-\sigma}$ for $\alpha\in\Sigma(\fg,\fj)$ shows $\Sigma(\fg,\fj)=\Sigma(\fh,\fj)$. Hence $\fj\cap[\fg,\fg]=\fj\cap[\fh,\fh]$ and the second half of the assertion follows.
\end{proof}

Write $X_{\alpha}$ for the elements in $\fs\cap[\fg,\fg]$ corresponding to $\alpha\in\Sigma$ under the identification $\fs\cap[\fg,\fg]\cong (\fs\cap[\fg,\fg])^{\vee}$ given by the Killing form $(\cdot,\cdot)$ of $[\fg,\fg]$.
\begin{lemma}\label{lem:positive}
The $\fs^{\sigma}$-module $\fg$ decomposes into $(\fh+\fp)\oplus (\bar{\fn}\cap\sigma\bar{\fn})^{-\sigma}$.
Moreover, if $\fg\neq\fh+\fp$, then there exists $X\in \fs^{\sigma}\cap[\fg,\fg]$ with the following properties:
\begin{enumerate}
\item\label{ind:first}
$X$ belongs to $\sum_{\alpha\in\Xi}\R_{>0}X_{\alpha}$, where $\Xi:=(\Sigma_N\cap\sigma\Sigma_L)\cup(\sigma\Sigma_N\cap\Sigma_L)\cup(\Sigma_N\cap\sigma\Sigma_N)$.
\item\label{ind:second}
$\alpha(X)>0$ for any $\alpha\in\Xi$.
\item\label{ind:third}
$X$ commutes with $\fl\cap\sigma\fl$.
\end{enumerate}
\end{lemma}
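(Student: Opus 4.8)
The plan is to decompose $\fg$ under the action of $\fs^\sigma$ and identify which weight spaces belong to $\fh+\fp$. First I would note that $\fh+\fp \supseteq \fh+\fl+\fn$, and since $\fh\cap\fp = (\fh\cap\fl)\ltimes\fu^\sigma$ with $\fu=(\fp\cap\sigma\fn)+(\fn\cap\sigma\fp)$ from \eqref{eq:factor3}, a weight-space count shows that the only $\fs^\sigma$-weights not captured by $\fh+\fp$ are those of $\bar\fn\cap\sigma\bar\fn$ for which the $(-\sigma)$-component survives. More precisely, for a root $\alpha\in\Sigma$ the space $\fg_\alpha$ (or $\fg_\alpha+\fg_{\sigma\alpha}$ when $\sigma\alpha$ is also a root) lies in $\fh+\fp$ unless $\alpha$ and $\sigma\alpha$ both lie in $\Sigma_{\bar N}$; in that case $\fg_\alpha\oplus\fg_{\sigma\alpha}$ contributes a $\sigma$-fixed part (inside $\fh$, hence inside $\fh+\fp$) and a $(-\sigma)$-part. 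Restricting everything to $\fs^\sigma$ and using that $\fg^{\sigma}\subset\fh$, this gives the direct sum decomposition $\fg = (\fh+\fp)\oplus(\bar\fn\cap\sigma\bar\fn)^{-\sigma}$ as $\fs^\sigma$-modules, which is the first assertion.

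For the second assertion, assume $\fg\neq\fh+\fp$, so $\Xi$ is nonempty where $\Xi=(\Sigma_N\cap\sigma\Sigma_L)\cup(\sigma\Sigma_N\cap\Sigma_L)\cup(\Sigma_N\cap\sigma\Sigma_N)$. The idea is to produce $X$ as a carefully weighted sum $X=\sum_{\alpha\in\Xi}c_\alpha X_\alpha$ with $c_\alpha>0$. Property \eqref{ind:third} — that $X$ commutes with $\fl\cap\sigma\fl$ — is the rigid constraint: since $\fl\cap\sigma\fl$ is reductive and $\fs\cap[\fg,\fg]$ is a Cartan subspace, commuting with $\fl\cap\sigma\fl$ means $X$ is orthogonal (under the Killing form) to all roots in $\Sigma_{\fl\cap\sigma\fl}=\Sigma_L\cap\sigma\Sigma_L$. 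So the plan is: take $X_0:=\sum_{\alpha\in\Xi}X_\alpha$, which manifestly satisfies \eqref{ind:first}, then project it onto the orthogonal complement of the span of $\Sigma_L\cap\sigma\Sigma_L$; call the result $X$. By construction $X$ satisfies \eqref{ind:third}, and one must check $X$ still lies in the open cone $\sum_{\alpha\in\Xi}\R_{>0}X_\alpha$ and that $\alpha(X)>0$ for all $\alpha\in\Xi$.

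The verification that $X\in\sum_{\alpha\in\Xi}\R_{>0}X_\alpha$ and $\alpha(X)>0$ for $\alpha\in\Xi$ is where the hypotheses on the symmetric pair enter, and this is the main obstacle. The key structural input is Lemma \ref{lem:rho} (and behind it Lemmas \ref{lem:extension} and \ref{lem:positive}'s ambient setup): under the three families of symmetric pairs allowed, one has good control over $\Sigma^\sigma$ and over how $\sigma$ permutes the positive roots, in particular that $\Sigma_1=\Sigma_P\cap\Sigma^\sigma$ behaves well and that $\fs^\sigma\cap[\fg,\fg]=\fs^\sigma\cap[\fh,\fh]$. I would argue that the set $\Xi$ is ``positively independent'' from $\Sigma_L\cap\sigma\Sigma_L$ in the sense that the projection does not destroy positivity: concretely, writing the projection as $X=X_0-Y$ with $Y$ in the span of $\Sigma_L\cap\sigma\Sigma_L$, one uses that roots in $\Xi$ pair nonnegatively with roots in $\Sigma_L\cap\sigma\Sigma_L$ on the relevant cone (a consequence of $\Xi$ consisting of roots ``above'' the Levi $\fl\cap\sigma\fl$), so $\alpha(X)\geq\alpha(X_0)>0$. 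Alternatively — and this may be cleaner — one can invoke a standard fact: for a parabolic, the sum of the roots in the nilradical is dominant and strictly positive on the corresponding positive chamber, and $\Xi$ is precisely the set of roots lying in the nilradical of the parabolic $\fl\cap\sigma\fl + \fu$ inside $\fp\cap\sigma\fp$-type configuration; I would then take $X$ to be (a suitable multiple of) the coroot-side element dual to $\rho$ of that nilradical, which automatically commutes with the Levi $\fl\cap\sigma\fl$, lies in the open cone spanned by the $X_\alpha$, $\alpha\in\Xi$, and is strictly positive on $\Xi$. Checking that this $\rho$-type element actually lies in $\fs^\sigma$ (not merely in $\fs$) again uses $\sigma$-stability of the configuration together with the pair-specific information that $\fs^\sigma\cap[\fg,\fg]=\fs^\sigma\cap[\fh,\fh]$, so that the dominant direction for the $\sigma$-stable nilradical is genuinely $\sigma$-fixed.
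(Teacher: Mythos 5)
Your treatment of the first statement (the decomposition $\fg=(\fh+\fp)\oplus(\bar{\fn}\cap\sigma\bar{\fn})^{-\sigma}$) follows the same root-space argument as the paper and is fine. The gap is in the second statement, at exactly the point carrying the difficulty. First, your projection step is vacuous: $\Xi$ is the set of $\fs$-weights of $\fu=(\fp\cap\sigma\fn)+(\fn\cap\sigma\fp)$, hence is stable under $W_{L\cap\sigma L}$ and under $\sigma$, so $X_0=\sum_{\alpha\in\Xi}X_\alpha$ is already $W_{L\cap\sigma L}$-invariant, hence already orthogonal to the span of $\Sigma_L\cap\sigma\Sigma_L$ and already in $\fs^{\sigma}$; your $X$ is just $X_0$. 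Your proposal therefore reduces to the assertion that $\alpha(X_0)>0$ for every $\alpha\in\Xi$, and neither justification you offer establishes it. The claim that roots of $\Xi$ pair nonnegatively with roots of $\Sigma_L\cap\sigma\Sigma_L$ is simply false: already for $(\gl_3(\R),\gl_2(\R)\oplus\gl_1(\R))$ with $\sigma$ acting trivially on $\fs$ and $\Sigma_L=\{\pm\alpha_1\}$ one has $\Xi=\Sigma_N\ni\alpha_2$ with $(\alpha_1,\alpha_2)<0$; and in any case such a bound would compare $\alpha(X)$ with $\alpha(X_0)$, not prove $\alpha(X_0)>0$. The ``standard fact'' about the $\rho$ of a parabolic nilradical is also unavailable: $(\fl\cap\sigma\fl)\ltimes\fu=\fp\cap\sigma\fp$ is an intersection of two parabolic subalgebras and is in general not a parabolic subalgebra of $\fg$, so $\fu$ is not a parabolic nilradical and the positivity of a $\rho$-type element on $\Xi$ is precisely what must be proved, not quoted. (Your appeal to Lemma \ref{lem:rho} is also not legitimate here: Lemma \ref{lem:positive} carries no hypotheses on the pair and is applied, e.g. in Lemmas \ref{lem:finite} and \ref{lem:asymptotic}, to arbitrary reductive symmetric pairs, whereas Lemma \ref{lem:rho} needs the restrictive conditions of Section \ref{sec:5-3}; fortunately $\sigma$-stability of $\Xi$ alone gives $X_0\in\fs^{\sigma}$, so that input is unnecessary.)

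The paper avoids exhibiting an explicit candidate altogether and instead argues by convex geometry: it forms the cone $E$ generated by the orbit sums $\sum_{\tau\in\fS_{\Xi}}X_{\tau\alpha}$ ($\alpha\in\Xi$), where $\fS_{\Xi}$ is generated by $W_{L\cap\sigma L}$ and $\sigma$, so that every element of $E$ satisfies \eqref{ind:third} and lies in $\fs^{\sigma}$ and interior points of $E$ in $V=\R E$ satisfy \eqref{ind:first}; it then shows that $E$ meets the interior of its dual cone $E^{\ast}=\{X\in V \mid \alpha(X)\ge 0\text{ for all }\alpha\in\Xi\}$ by proving $(E+E^{\ast})\cap-(E+E^{\ast})=0$, using $E\cap(-E)=0$ (which rests on $\Xi\subset\Sigma^+$). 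An interior point of $E\cap E^{\ast}$ is the desired $X$. If you want to salvage your route, you must actually prove $\alpha(X_0)>0$ for all $\alpha\in\Xi$; note that the usual argument via the reflection $s_\alpha$ fails because $s_\alpha$ need not preserve $\Xi$ when $\alpha\notin\Sigma_L\cap\sigma\Sigma_L$. As written, that positivity is asserted rather than proved, and it is the heart of the lemma.
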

\begin{proof}
Let us prove the first statement.
By $\Sigma=\Sigma_P\sqcup(\Sigma_{\bar{N}}\cap\sigma\Sigma_P)\sqcup(\Sigma_{\bar{N}}\cap\sigma\Sigma_{\bar{N}})$ and $\fg_{\alpha}\subset (\id+\sigma)\fg+\sigma\fg_{\alpha}\subset\fh+\fp$ for any $\alpha\in\Sigma_{\bar{N}}\cap\sigma\Sigma_P$, we have $\fg=\fh+\fp+(\bar{\fn}\cap\sigma\bar{\fn})$.
By $(\fh+\fp)\cap\bar{\fn}\cap\sigma\bar{\fn}=(\bar{\fn}\cap\sigma\bar{\fn})^{\sigma}$, we obtain $\fg=(\fh+\fp)\oplus(\bar{\fn}\cap\sigma\bar{\fn})^{-\sigma}$.

We next prove the second statement.
Let $W_{L\cap\sigma L}$ be the Weyl group for $(\fl\cap\sigma\fl,\fs)$.
We remark that the groups $W_{L\cap\sigma L}$ and $\{\id,\sigma\}$ act on the set $\Xi$. 
Write $\fS_{\Xi}$ for the group of permutations of $\Xi$ generated by $W_{L\cap\sigma L}$ and $\sigma$.
By the assumption $\fg\neq\fh+\fp$, the subset $\Xi$ of $\Sigma^+$ is nonempty.
Hence the convex cones
\[
E:=\R_{\ge 0}\Set{\sum_{\tau\in\fS_{\Xi}}X_{\tau\alpha}\in\fs^{\sigma}\cap[\fg,\fg]|\alpha\in\Xi}\subset V:=\R E
\]
satisfy $E\neq 0, V$ and $E\cap (-E)=0$.
Any element $X$ in $E$ satisfies \ref{ind:third} since $X$ is invariant under the reflections with respect to roots in $\Sigma_L\cap\sigma\Sigma_L$.
Since the dual cone $E^{\ast}$ of $E$ in $V$ is written as 
\[
E^{\ast}=\Set{X\in V|\alpha(X)\ge 0\text{ for any $\alpha\in\Xi$}},
\]
it suffices to show that the closed convex cone $E\cap E^{\ast}$ has an inner point in $V$, which is equivalent to $(E+E^{\ast})\cap -(E+E^{\ast})=0$ by the general theory of convex cones (cf. \cite[Theorem 1.2.5]{Fen53}).
Let $u\in (E+E^{\ast})\cap -(E+E^{\ast})$ and write $u=v_1+w_1=-v_2-w_2$ for some $v_1,v_2\in E$ and $w_1,w_2\in E^{\ast}$. We see $0=\|v_1+w_1+v_2+w_2\|^2=\|v_1+v_2\|^2+\|w_1+w_2\|^2+2(v_1+v_2,w_1+w_2)$ and the three terms on the right hand side are nonnegative. Hence $v_1=-v_2, w_1=-w_2$. From $E\cap (-E)=0$ and $E^{\ast}\cap(-E^{\ast})=0$, we obtain $u=0$. Hence $(E+E^{\ast})\cap -(E+E^{\ast})=0$.
\end{proof}

\section{Homology of principal series representations and orbit decompositions}\label{sec:5}

\subsection{Decomposition into $H$-orbits}\label{sec:5-1}

Retain notation in Section \ref{sec:3-2}.
Let $H$ be a closed (not necessarily symmetric) Nash subgroup of $G$. 
Assume that the set of double cosets $\fX:= H\backslash G/P$ is finite.

Let $\gamma$ be an SF representation of $L$.
By letting $N$ act trivially, we regard $\gamma$ as a representation of $P$.
Consider the normalized induction $\Pi=\Ind^G_P(\gamma)$ of $\gamma$, which is given by the space $\cS(G/P,\cV)$ of Schwartz sections of the homogeneous tempered bundle $\cV:=G\times_P(\gamma\cdot\delta_P^{1/2})$.
Let $\chi$ be a finite-dimensional SF representation of $H$.

In this section, we give an upper estimate of the dimension of $\Hom_H(\Pi,\chi^{\vee})$.

Let $\{g_j\}_{j=1}^{j_0}$ be a complete set of representatives of $\fX= H\backslash G/P$.
Equip $\fX$ with the closure relation.
Let us fix an order-preserving bijection from $\fX$ to $[1,j_0]$.
For $j\in[1,j_0]$, we write $\cO_j$ for the preimage of $j$.
Since $U_j:=(\bigsqcup_{j<k\le j_0}\cO_k)/P$ is an $H$-invariant open subset of $G/P$, the space of Schwartz sections
\[\Pi_j:=\cS(U_j,\cV|_{U_j})\]
of the restriction $\cV|_{U_j}$
is an $H$-subrepresentation of $\Pi$.
Hence we obtain a descending sequence
\[\Pi_0\otimes\chi^{\vee}\supset\Pi_1\otimes\chi^{\vee}\supset\Pi_2\otimes\chi^{\vee}\supset\cdots\supset\Pi_{j_0}\otimes\chi^{\vee}=0\]
of SF representations of $H$, where we set $\Pi_0:=\Pi$.

Put $\rho_j:=\Pi_{j-1}/\Pi_j$ for $j\in[1,j_0]$.
Since tensoring with $\chi^{\vee}$ is an exact functor, the short exact sequence $0\to\Pi_{j}\to\Pi_{j-1}\to\rho_j\to 0$ and Lemma \ref{lem:long exact} gives us an long exact sequence
\[\cdots\to H_l(H,\Pi_j\otimes\chi^{\vee})\to H_l(H,\Pi_{j-1}\otimes\chi^{\vee})\to H_l(H,\rho_j\otimes\chi^{\vee})\to H_{l-1}(H,\Pi_j\otimes\chi^{\vee})\to\cdots.\]
In particular, $\dim H_l(H,\Pi_{j-1}\otimes\chi^{\vee})\le \dim H_l(H,\Pi_j\otimes\chi^{\vee})+\dim H_l(H,\rho_j\otimes\chi^{\vee})$ for any $l$. Hence
\begin{align}\label{eq:orbits}
\dim H_l(H,\Pi\otimes\chi^{\vee})\le \sum_{j=1}^{j_0}\dim H_l(H,\rho_j\otimes\chi^{\vee}).
\end{align}

Fix $j\in[1,j_0]$.
Set $Q_j:=H\cap g_jPg_j^{-1}$.
Recall that $\cO_j$ is a closed Nash submanifold of $U_{j-1}$ and isomorphic to $H/Q_j$.
By Lemma \ref{lem:Borel}, $\rho_j$ admits a descending sequence $\{\rho_{j,k}\}_{k\in\N}$ of $H$-subrepresentations satisfying $\rho_{j,0}=\rho_{j}$ and
\begin{align*}
\rho_j\otimes\chi^{\vee}&\cong \varprojlim_k(\rho_j/\rho_{j,k})\otimes\chi^{\vee},\\
(\rho_{j,k}/\rho_{j,k+1})\otimes\chi^{\vee}&\cong\cS(H/Q_j,H\times_{Q_j}({}^{g_j^{-1}}(\gamma\cdot\delta_P^{1/2})\otimes\Sym^k(\fg/(\fh+\Ad(g_j)\fp))^{\vee}_{\C}))\otimes\chi^{\vee}\\
&\cong\cS(H/Q_j,H\times_{Q_j}({}^{g_j^{-1}}(\gamma\cdot\delta_P^{1/2})\otimes\Sym^k(\fg/(\fh+\Ad(g_j)\fp))^{\vee}_{\C}\otimes\chi^{\vee}))\quad(k\in\N)
\end{align*}
as $H$-representations.
For $l,k\in\N$, we define 
\begin{align}\label{df:d}
d_{l,j,k}:=\dim H_l(H,(\rho_{j,k}/\rho_{j,k+1})\otimes\chi^{\vee}).
\end{align}
Put $H^j:=g_j^{-1}Hg_j$.
For a closed subgroup $R$ of $G$, we define $R^j:=H^j\cap R$.
\begin{remark}
When $H$ is the group of fixed points of an involution $\sigma$ on $G$, the group $R^j$ equals the group $(R\cap\sigma_j(R))^{\sigma_j}$ of fixed points in $R$ of the involution $\sigma_j:=\Psi(g_j)^{-1}\circ\sigma\circ\Psi(g_j)$ on $G$.
\end{remark}
By Lemmas \ref{lem:comparison} and \ref{lem:Shapiro}, we obtain the following isomorphisms of vector spaces:
\begin{align}\label{eq:pieces}
&H_l(H,(\rho_{j,k}/\rho_{j,k+1})\otimes\chi^{\vee})\\\notag
&\cong
H_l(Q_j,{}^{g_j^{-1}}(\gamma\cdot\delta_P^{1/2})\otimes\Sym^k(\fg/(\fh+\Ad(g_j)\fp))^{\vee}_{\C}\otimes\chi^{\vee}\cdot\delta_H\cdot\delta_{Q_j}^{-1})\\\notag
&\cong H_l(P^j,\gamma\otimes\Sym^k(\fg/(\fh^j+\fp))^{\vee}_{\C}\otimes{}^{g_j}\chi^{\vee}\cdot\delta_{H^j}\cdot\delta_P^{1/2}\cdot\delta_{P^j}^{-1})\\\notag
&\cong H_l(\fp^j_{\C},K\cap L^j;\gamma_{\text{$(K\cap L^j)$-fin}}\otimes\Sym^k(\fg/(\fh^j+\fp))^{\vee}_{\C}\otimes{}^{g_j}\chi^{\vee}\cdot\delta_{H^j}\cdot\delta_P^{1/2}\cdot\delta_{P^j}^{-1}).
\end{align}

\begin{lemma}\label{lem:bound}
Let $l\in\N$.
If $d_{l+1,j,k}$ is finite for any $j\in[1,j_0]$ and $k\in\N$, then
\[
\dim H_l(H,\Pi\otimes\chi^{\vee})
\le\sum_{j=1}^{j_0}\sum_{k=0}^{\infty} d_{l,j,k}.
\]
In particular, if $\gamma$ is finite-dimensional and $d_{l,j,k}=0$ for sufficiently large $k$, then $H_l(H,\Pi\otimes\chi^{\vee})$ is finite-dimensional.
\end{lemma}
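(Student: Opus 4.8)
The plan is to bound each term of \eqref{eq:orbits} by the corresponding double sum over $k$, and then to leverage the long exact sequences coming from the filtration $\{\rho_{j,k}\}_{k}$ together with the pro-limit formula $\rho_j\otimes\chi^{\vee}\cong\varprojlim_k(\rho_j/\rho_{j,k})\otimes\chi^{\vee}$. Concretely, I would fix $j$ and first establish, by descending induction on the filtration level, an estimate
\[
\dim H_l(H,(\rho_j/\rho_{j,k})\otimes\chi^{\vee})\le\sum_{i=0}^{k-1}d_{l,j,i}
\]
for every $k\in\N$. The inductive step uses the short exact sequence $0\to(\rho_{j,k}/\rho_{j,k+1})\otimes\chi^{\vee}\to(\rho_j/\rho_{j,k+1})\otimes\chi^{\vee}\to(\rho_j/\rho_{j,k})\otimes\chi^{\vee}\to 0$ and the resulting long exact sequence from Lemma \ref{lem:long exact}, which gives $\dim H_l$ of the middle term $\le d_{l,j,k}+\dim H_l$ of the quotient. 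The base case $k=0$ is trivial since $\rho_j/\rho_{j,0}=0$.

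Next I would pass from $(\rho_j/\rho_{j,k})\otimes\chi^{\vee}$ to $\rho_j\otimes\chi^{\vee}$ using Lemma \ref{lem:commute}. To apply it I need that $H_{l+1}(H,(\rho_j/\rho_{j,k})\otimes\chi^{\vee})$ is finite-dimensional for every $k$; but the estimate of the previous paragraph at homological degree $l+1$ shows this is bounded by $\sum_{i=0}^{k-1}d_{l+1,j,i}$, which is finite by the hypothesis that each $d_{l+1,j,k}$ is finite. (Here I should note that $(\rho_j/\rho_{j,k})\otimes\chi^{\vee}$ is realized as Schwartz sections of a bundle over $\cO_j$ with a finite-order jet condition, so the hypothesis of Lemma \ref{lem:commute} is in the required form.) Then Lemma \ref{lem:commute} gives
\[
H_l(H,\rho_j\otimes\chi^{\vee})\cong\varprojlim_k H_l(H,(\rho_j/\rho_{j,k})\otimes\chi^{\vee}).
\]
Since the dimensions of the terms of this inverse system are uniformly bounded by $\sum_{i=0}^{\infty}d_{l,j,i}$, and an inverse limit of finite-dimensional vector spaces with uniformly bounded dimension has dimension at most that bound, we get $\dim H_l(H,\rho_j\otimes\chi^{\vee})\le\sum_{k=0}^{\infty}d_{l,j,k}$. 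Summing over $j$ and combining with \eqref{eq:orbits} yields the displayed inequality. The final sentence of the lemma is then immediate: if $\gamma$ is finite-dimensional and $d_{l,j,k}=0$ for large $k$, then each inner sum is a finite sum of finite numbers (the finiteness of individual $d_{l,j,k}$ in degree $l$ following once we know it in degree $l+1$, or more simply from the same boundedness argument applied with $l$ in place of $l+1$, noting that for $\gamma$ finite-dimensional the relevant relative Lie algebra homology spaces are computed from finite-dimensional data), hence the right-hand side is finite.

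The main obstacle I anticipate is the interchange of $\dim$ with $\varprojlim_k$: one must be careful that the inverse system $\{H_l(H,(\rho_j/\rho_{j,k})\otimes\chi^{\vee})\}_k$ really does have all dimensions bounded by the \emph{fixed} quantity $\sum_{k}d_{l,j,k}$ independent of $k$, and that therefore the transition maps are eventually isomorphisms onto a subspace of bounded dimension, forcing the limit to be finite-dimensional with dimension $\le\sum_k d_{l,j,k}$. This is the step where the finiteness hypothesis on $d_{l+1,j,k}$ is genuinely used (via Lemma \ref{lem:commute}), and where one has to make sure the topological subtleties in Lemma \ref{lem:long exact} — the maps are continuous but the sequences need not be strongly exact — do not affect the purely dimension-theoretic conclusion, which they do not since we only ever extract inequalities on dimensions of (possibly non-Hausdorff) vector spaces.
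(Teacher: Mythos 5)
Your proposal is correct and follows essentially the same route as the paper: the inductive bound $\dim H_l(H,(\rho_j/\rho_{j,k})\otimes\chi^{\vee})\le\sum_{a=0}^{k-1}d_{l,j,a}$ via the short exact sequences and Lemma \ref{lem:long exact}, the application of Lemma \ref{lem:commute} using the degree-$(l+1)$ bound and the hypothesis on $d_{l+1,j,k}$, and then summing over orbits with \eqref{eq:orbits}. Your extra justification of the interchange of $\dim$ with $\varprojlim_k$, and the observation that finite-dimensionality of $\gamma$ forces all $d_{\ast,j,k}$ to be finite, only make explicit what the paper leaves implicit.
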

\begin{proof}
By the short exact sequence $0\to\rho_{j,a}/\rho_{j,a+1}\to\rho_j/\rho_{j,a+1}\to\rho_j/\rho_{j,a}\to 0$ for $a\in[0,k-1]$ and Lemma \ref{lem:long exact}, we obtain 
\[
\dim H_l(H,(\rho_j/\rho_{j,k})\otimes\chi^{\vee})\le\sum_{a=0}^{k-1}d_{l,j,a}.
\]
Hence $H_{l+1}(H,(\rho_j/\rho_{j,k})\otimes\chi^{\vee})$ is finite-dimensional by the assumption, and 
\[
\dim H_l(H,\rho_j\otimes\chi^{\vee})=\dim\varprojlim_k H_l(H,(\rho_j/\rho_{j,k})\otimes\chi^{\vee})\le\sum_{k=0}^{\infty}d_{l,j,k}
\]
by Lemma \ref{lem:commute}.
Therefore the assertion follows from the inequality \eqref{eq:orbits}.
\end{proof}

\subsection{Finiteness for symmetric subgroups}\label{sec:5-2}
From this section, we assume that a real reductive group $G$ is of inner type (i.e. $\Ad(G)$ is contained in the group of inner automorphisms of $\fg_{\C}$), that $H$ is a symmetric subgroup of $G$, and that an SF representation $\gamma$ of $L$ has finite length, unless otherwise stated.
Take $\theta, \fs, N_{\min}, P_{\min}$ as in Section \ref{sec:4}.
Retain notation as in Section \ref{sec:5-1}.

In this section, we prove the finite-dimensionality of the Schwartz homology $H_{\ast}(H,\pi\otimes\chi)$ for any SAF representation $\pi$ of finite length of $G$ (Proposition \ref{prop:finite}).

Since $G$ is of inner type, there exists a canonical surjection \[(H\cap[G,G]_0)\backslash [G,G]_0/(P\cap [G,G]_0)\to H\backslash G/P.\]
By the description of the former double coset in \cite[Theorem 3]{Mat79}, we may choose a complete set of representatives $\{g_j\}_{j=1}^{j_0}$ for $H\backslash G/P$ so that $\sigma_j$ preserves $\fs$ for any $j\in[1,j_0]$.
Define an involution $\sigma_j$ of $G$ by
\[
\sigma_j:=\Psi(g_j)^{-1}\circ\sigma\circ\Psi(g_j).
\]

\begin{lemma}\label{lem:finite}
We have $d_{l,j,k}<\infty$ (see \eqref{df:d} for the definition) for any $j\in[1,j_0]$ and $l,k\in\N$.
\end{lemma}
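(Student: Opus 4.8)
The plan is to reduce the finiteness of $d_{l,j,k}$ to a known finiteness result for relative Lie algebra homology. By the isomorphisms in \eqref{eq:pieces}, we have
\[
H_l(H,(\rho_{j,k}/\rho_{j,k+1})\otimes\chi^{\vee})
\cong H_l(\fp^j_{\C},K\cap L^j;\gamma_{\text{$(K\cap L^j)$-fin}}\otimes\Sym^k(\fg/(\fh^j+\fp))^{\vee}_{\C}\otimes{}^{g_j}\chi^{\vee}\cdot\delta),
\]
where $\delta:=\delta_{H^j}\cdot\delta_P^{1/2}\cdot\delta_{P^j}^{-1}$ is a character of $P^j$. So it suffices to show that this relative Lie algebra homology is finite-dimensional. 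The module is the tensor product of a $(\fg_{\C},K)$-module of finite length (namely $\gamma$, extended to $\fp$ with $\fn$ acting trivially), restricted to $(\fp^j_{\C},K\cap L^j)$, with a finite-dimensional $(\fp^j_{\C},K\cap L^j)$-module $W_k:=\Sym^k(\fg/(\fh^j+\fp))^{\vee}_{\C}\otimes{}^{g_j}\chi^{\vee}\cdot\delta$.

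First I would set up the structure of the pair $(\fp^j,\fl^j\ltimes(\text{something nilpotent}))$. Recall $P^j=H^j\cap P$; by Lemma \ref{lem:factor} applied to the symmetric subgroup $H^j$ of $G$ with involution $\sigma_j$, we have $H^j\cap P=(H^j\cap L)\ltimes U^{\sigma_j}$ where $U^{\sigma_j}$ has a nilpotent Lie algebra, with an ideal $\fh^j\cap\fn$ and quotient isomorphic to $\fl\cap\sigma_j\fn$. So $\fp^j$ is an extension of the reductive Lie algebra $\fl^j=\fh^j\cap\fl$ by a nilpotent ideal. The key technical tool is the Hochschild–Serre spectral sequence for relative Lie algebra homology: $H_l(\fp^j_{\C},K\cap L^j;-)$ is computed from $H_{\bullet}(\fl^j_{\C},K\cap L^j; H_{\bullet}(\fu^{\sigma_j}_{\C},-))$. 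Since $\fu^{\sigma_j}$ is finite-dimensional, the $\fu^{\sigma_j}$-homology of a finite-length $(\fg_{\C},K)$-module tensored with a finite-dimensional module, computed via the standard complex $\wedge^{\bullet}\fu^{\sigma_j}_{\C}\otimes(-)$, is a finitely generated $U(\fl^j_{\C})$-module which is moreover $Z(\fg_{\C})$-finite hence $Z(\fl_{\C})$-finite; together with admissibility this forces each $H_q(\fu^{\sigma_j}_{\C},-)$ to be an admissible finite-length $(\fl^j_{\C},K\cap L^j)$-module (here I'd invoke the theory of $\fn$-homology of Harish-Chandra modules — Casselman–Osborne / Hecht–Schmid — adapted to the nilpotent $\fu^{\sigma_j}$). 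Then the relative Lie algebra homology $H_l(\fl^j_{\C},K\cap L^j;-)$ of an admissible finite-length module is finite-dimensional by standard finiteness (e.g. the relative version of the result that $H^{\ast}(\fg,K;V)$ is finite-dimensional for finite-length admissible $V$, as in \cite{BW00}).

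The main obstacle I anticipate is making precise that the intermediate homology groups $H_q(\fu^{\sigma_j}_{\C}, \gamma\otimes W_k)$ are admissible (or at least finitely generated over $Z(\fl^j_{\C})$ with finite-dimensional weight spaces) as $(\fl^j_{\C},K\cap L^j)$-modules. One cannot directly quote the classical $\fn$-homology theorems because $\fu^{\sigma_j}$ is not the nilradical of a parabolic of $\fg$; however, since $\fu^{\sigma_j}_{\C}$ is an ad-nilpotent subalgebra normalized by $\fl^j$ and the standard homological complex has finite-dimensional terms over each $\fu^{\sigma_j}$-isotypic piece, and since the whole thing is $Z(\fg_{\C})$-finite (so $Z(\fl_{\C})$-finite by Lemma \ref{lem:n-homology2} applied step-by-step or directly), admissibility of $\gamma$ propagates. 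An alternative, perhaps cleaner, route that avoids this is to argue directly: the module $\gamma_{\text{$(K\cap L^j)$-fin}}\otimes W_k$ is a $Z(\fp^j_{\C})$-finite (via $Z(\fg_{\C})$), finitely generated $(\fp^j_{\C},K\cap L^j)$-module whose underlying space is admissible as a $(K\cap L^j)$-module (since $\gamma$ is admissible for $K\cap L$ hence for the smaller $K\cap L^j$, $W_k$ being finite-dimensional), and invoke a general finiteness theorem for relative Lie algebra homology of such modules over the pair $(\fp^j_{\C},K\cap L^j)$ — which holds because $\fp^j$ is algebraic and the module is "admissible of finite type." I would state this as a lemma and prove it by the spectral sequence argument above, citing \cite{Vog81}, \cite{KV95}, and \cite{BW00} for the building blocks. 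Either way, once admissibility and finite length at the level of $\fl^j$ are in hand, finite-dimensionality of $d_{l,j,k}$ is immediate.
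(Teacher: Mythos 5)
There is a genuine gap, and it sits exactly at the step you flag as the ``main obstacle.'' Your argument hinges on the claim that $\gamma_{\text{$(K\cap L^j)$-fin}}$ (equivalently, the intermediate homology $H_q(\fu^{\sigma_j}_{\C},\gamma\otimes W_k)$) is admissible, or of finite length, as a module for the pair $(\fl^j_{\C},K\cap L^j)$, ``since $\gamma$ is admissible for $K\cap L$ hence for the smaller $K\cap L^j$.'' Admissibility does \emph{not} pass to smaller compact subgroups. The setting of Lemma \ref{lem:finite} allows, e.g., the group case $(\fg,\fh)=(\fg'\oplus\fg',\fg')$: there $L$ can be a product $L_1\times L_2$ with $\sigma_j$ swapping the factors, $K\cap L^j$ is a \emph{diagonal} compact subgroup, and the restriction of $\gamma=\gamma_1\boxtimes\gamma_2$ to this diagonal has infinite multiplicities (already for two discrete series of $\GL_2(\R)$ restricted to the diagonal $\SO(2)$). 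So the module over $(\fl^j_{\C},K\cap L^j)$ is not admissible, Casselman--Osborne/Hecht--Schmid does not produce a Harish--Chandra module for $\fl^j$ (those theorems give admissibility over the \emph{full} Levi of a parabolic of $\fg$, not over a symmetric subalgebra of it), and the finiteness theorem of \cite{BW00} you invoke applies only to $Z$-finite admissible modules over the same pair. In fact, finite-dimensionality of $(\fl^j_{\C},K\cap L^j)$-homology of a Harish--Chandra module of the bigger group restricted to a symmetric subpair is precisely the nontrivial content being established here (it is essentially Proposition \ref{prop:finite} one level down), so the reduction is circular rather than a reduction to something known.

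The paper's proof goes around this by an induction on the degree $l$ combined with a second orbit decomposition, never needing admissibility over $K\cap L^j$. Using long exact sequences (Lemma \ref{lem:long exact}) one may take $\gamma$ irreducible; by Casselman's subrepresentation theorem $\gamma$ is a quotient of $\Ind^L_{P_L}(\gamma_L)$ with $\gamma_L$ \emph{finite-dimensional}, and the kernel $\gamma'$ has finite length, so the induction hypothesis in degree $l-1$ reduces the problem to $H_l(P^j,\Ind^L_{P_L}(\gamma_L)\otimes V)$. After a Hochschild--Serre step this becomes homology over the group $L^j\ltimes(L\cap\sigma_jN)$ acting on $\Ind^L_{P_L}(\gamma_L)\otimes H_{l-p}(\fn^j_{\C},V)$, and one reruns the machinery of Section \ref{sec:5-1} for the finitely many orbits of $L^j\ltimes(L\cap\sigma_jN)$ on $L/P_L$ (Bruhat plus Matsuki). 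Because the inducing datum $\gamma_L$ is finite-dimensional, each normal-derivative layer is manifestly finite-dimensional, and the positivity element of Lemma \ref{lem:positive} together with the vanishing criterion of Lemma \ref{lem:spectral seq} kills all sufficiently high symmetric powers of the conormal bundle, so Lemma \ref{lem:bound} yields finiteness. If you want to salvage your outline, you would need to replace the admissibility claim by an argument of this kind (or some other genuine input); as written, the key intermediate assertion is false in general and the cited finiteness results do not apply.
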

\begin{proof}
Fix $j\in[1,j_0]$.
By \eqref{eq:pieces}, it suffices to show that $H_l(P^j,\gamma\otimes V)$ is finite-dimensional for any SAF representation $\gamma$ of finite length of $L$ and any finite-dimensional SF representation $V$ of $P^j$.

Let us prove it by induction on $l$.
By taking a composition series of $\gamma$ and applying Lemma \ref{lem:long exact}, we may assume that $\gamma$ is irreducible.

Write $P_L=L_LN_L$ for the standard minimal parabolic subgroup of $L$ and its Levi decomposition with $L_L\subset L$ and $N_L\subset N_{\min}$.
By Casselman's subrepresentation theorem, there exists a finite-dimensional SF representation $\gamma_L$ of $L_L$ such that 
the contragredient $\gamma^{\vee}$ is a subrepresentation of the induced representation $\Ind^L_{P_L}(\gamma_L^{\vee})$.
Hence $\gamma$ can be written as an irreducible quotient of $\Ind^L_{P_L}(\gamma_L)$.
Put $\gamma'$ to be the kernel of the quotient map, which has finite length as an SAF representation of $L$.
By Lemma \ref{lem:long exact}, we see 
\[
\dim H_l(P^j,\gamma\otimes V)\le \dim H_{l-1}(P^j,\gamma'\otimes V)+\dim H_l(P^j,\Ind^L_{P_L}(\gamma_L)\otimes V).
\]
From the induction hypothesis, it suffices to show that $H_l(P^j,\Ind^L_{P_L}(\gamma_L)\otimes V)$ is finite-dimensional for any finite-dimensional SF representation $\gamma_L$ of $L_L$ and any finite-dimensional SF representation $V$ of $P^j$.
We have isomorphisms
\begin{align}\notag
H_l(P^j,\Ind^L_{P_L}(\gamma_L)\otimes V)&\cong H_l(\fp^j_{\C},K\cap L^j;\Ind^L_{P_L}(\gamma_L)_{\text{$(K\cap L^j)$-fin}}\otimes V)\\\notag
&\cong\bigoplus_{0\le p\le l}H_p(\fl^j_{\C}\ltimes(\fl\cap\sigma_j\fn)_{\C},K\cap L^j;\Ind^L_{P_L}(\gamma_L)_{\text{$(K\cap L^j)$-fin}}\otimes H_{l-p}(\fn^j_{\C},V))\\
&\cong\bigoplus_{0\le p\le l}H_p(L^j\ltimes (L\cap \sigma_jN),\Ind^L_{P_L}(\gamma_L)\otimes H_{l-p}(\fn^j_{\C},V))\label{eq:reduction}
\end{align}
of vector spaces, where we used Lemma \ref{lem:comparison} at the first and the third isomorphisms, the Hochschild-Serre spectral sequences for the relative Lie algebra homology groups at the second isomorphism.

To prove the finite-dimensionality of \eqref{eq:reduction}, we replace $(G,H,P,\gamma,\chi^{\vee})$ in Section \ref{sec:5-1} by $(L,L^j\ltimes (L\cap \sigma_jN),P_L,\gamma_L,H_{l-p}(\fn^j_{\C},V))$ and apply the same arguments.
Recall that $L$ is a real reductive group of inner type, that $L^j$ is a symmetric subgroup of $L\cap\sigma_jL$, and that $(L\cap\sigma_jL)\ltimes(L\cap\sigma_jN)$ is a parabolic subgroup of $L$.
From the Bruhat decomposition and the double coset decomposition \cite[Theorem 3]{Mat79} for a symmetric group and a parabolic subgroup,
the cardinality of $(L^j\ltimes (L\cap \sigma_jN))\backslash L/P_L$ is finite and we may take a complete set $\{l_c\}_{1\le c\le c_0}$ of representatives so that the involution $\sigma_{j,c}:=\Psi(l_c)^{-1}\circ\sigma_j\circ\Psi(l_c)$ preserves $\fs$ for any $1\le c\le c_0$.
For a closed subgroup $R$ of $G$, we put $R^{j,c}:=R\cap l_c^{-1}g_j^{-1}Hg_jl_c$.
Then we see 
\[
\Psi(l_c)^{-1}(L^j\ltimes(L\cap\sigma_jN))=L^{j,c}\ltimes(L\cap\sigma_{j,c}N),\quad (L^{j,c}\ltimes(L\cap\sigma_{j,c}N))\cap P_L=L_L^{j,c}\ltimes (N_L^{j,c}(N_L\cap\sigma_{j,c}N)).
\]
From the finite-dimensionality of $\gamma_L$, Lemma \ref{lem:bound} and \eqref{eq:pieces},
it suffices to show that given $p\in\N$ and a finite-dimensional SF representation $W$ of $L_L^{j,c}\ltimes(N_L\cap\sigma_{j,c}N)$,
\begin{align}\label{eq:finite}
H_p(\fl^{j,c}_{L,\C}\ltimes(\fn_L^{j,c}+(\fn_L\cap\sigma_{j,c}\fn))_{\C},K\cap L^{j,c}_L;\Sym^k(\fl/(\fl^{j,c}+(\fl\cap\sigma_{j,c}\fn)+\fp_L))^{\vee}_{\C}\otimes W)=0
\end{align}
for sufficiently large $k\in\N$.

If $\fl=\fl^{j,c}+(\fl\cap\sigma_{j,c}\fn)+\fp_L$, then $\Sym^k(\fl/(\fl^{j,c}+(\fl\cap\sigma_{j,c}\fn)+\fp_L))^{\vee}_{\C}=0$ for $k\ge 1$ and there is nothing to prove.
Hence we may assume $\fl\neq\fl^{j,c}+(\fl\cap\sigma_{j,c}\fn)+\fp_L$.

As in the proof of Lemma \ref{lem:positive}, the $\fs^{\sigma_{j,c}}$-module $\fl$ is a submodule of $\fl^{j,c}+(\fl\cap\sigma_{j,c}\fn)+\fp_L+(\bar{\fn}_{\min}\cap\sigma_{j,c}\bar{\fn}_{\min})$.
By applying Lemma \ref{lem:positive} to the case $(G,\sigma,P)=(G,\sigma_{j,c},P_{\min})$, we obtain an element $X$ in $\fs^{\sigma_{j,c}}$ such that the eigenvalues of the action of $X$ on $\fn_L^{j,c}, \fn_L\cap\sigma_{j,c}\fn$ and $(\fl/(\fl^{j,c}+(\fl\cap\sigma_{j,c}\fn)+\fp_L))^{\vee}$ are all positive.
Hence the real parts of the eigenvalues of the action of $X$ on \[\wedge^{\ast}(\fn_L^{j,c}+(\fn_L\cap\sigma_{j,c}\fn))_{\C}\otimes\Sym^k(\fl/(\fl^{j,c}+(\fl\cap\sigma_{j,c}\fn)+\fp_L))^{\vee}_{\C}\otimes W\] are positive for sufficiently large $k$ from the assumption $\fl\neq\fl^{j,c}+(\fl\cap\sigma_{j,c}\fn)+\fp_L$.
Since $\fs^{\sigma_{j,c}}$ is contained in the center of $\fl^{j,c}_{L}$, we may apply Lemma \ref{lem:spectral seq} to the case \[(\fg,\fl,\fl',\fv,V)=(\fl_L,\fl_L,\fl_L^{j,c},\fn_L^{j,c}+(\fn_L\cap\sigma_{j,c}\fn),\C)\] and obtain \eqref{eq:finite}.
\end{proof}

\begin{lemma}\label{lem:asymptotic}
For $l\in\N$ and $j\in[1,j_0]$, we have $d_{l,j,k}=0$ for sufficiently large $k$.
\end{lemma}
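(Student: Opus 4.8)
The plan is to imitate the proof of Lemma~\ref{lem:finite}, carrying the factor $\Sym^k(\fg/(\fh^j+\fp))^{\vee}_{\C}$ along through every reduction, and to observe at the end that this factor shifts linearly in $k$ the eigenvalues controlled by Lemma~\ref{lem:spectral seq}, so that for $k$ large those eigenvalues become nonzero and the relevant homology vanishes.

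First fix $l$ and $j$. By the isomorphism \eqref{eq:pieces}, the assertion is equivalent to the vanishing, for $k$ large, of $H_l(\fp^j_{\C},K\cap L^j;\gamma_{\text{$(K\cap L^j)$-fin}}\otimes\Sym^k(\fg/(\fh^j+\fp))^{\vee}_{\C}\otimes W)$ with $W:={}^{g_j}\chi^{\vee}\cdot\delta_{H^j}\cdot\delta_P^{1/2}\cdot\delta_{P^j}^{-1}$, and I would prove this more generally for any finite-length SAF representation $\gamma$ of $L$ and any finite-dimensional SF representation $W$ of $P^j$. If $\fg=\fh^j+\fp$ the symmetric-power factor vanishes for $k\ge1$ and there is nothing to prove, so assume $\fg\neq\fh^j+\fp$. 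By the first assertion of Lemma~\ref{lem:positive} applied to $(G,\sigma,P)=(G,\sigma_j,P)$ one has $\fg/(\fh^j+\fp)\cong(\bar\fn\cap\sigma_j\bar\fn)^{-\sigma_j}$, so the $\fs^{\sigma_j}$-weights of $\Sym^k(\fg/(\fh^j+\fp))^{\vee}_{\C}$ are $k$-fold sums of restrictions of roots in $\Sigma_N\cap\sigma_j\Sigma_N$, and the element furnished by Lemma~\ref{lem:positive} takes strictly positive values on those roots.

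Next I would run the reductions of Lemma~\ref{lem:finite}, always keeping the factor $\Sym^k(\fg/(\fh^j+\fp))^{\vee}_{\C}$. A composition series of $\gamma$ and Lemma~\ref{lem:long exact} reduce to $\gamma$ irreducible; Casselman's subrepresentation theorem writes $\gamma$ as a quotient of $\Ind^L_{P_L}(\gamma_L)$ with $\gamma_L$ finite-dimensional, and the long exact sequence of the resulting short exact sequence, together with induction on $l$, reduces the claim to the vanishing, for $k$ large, of
\[
H_l\bigl(\fp^j_{\C},K\cap L^j;\Ind^L_{P_L}(\gamma_L)_{\text{$(K\cap L^j)$-fin}}\otimes\Sym^k(\fg/(\fh^j+\fp))^{\vee}_{\C}\otimes W\bigr).
\]
The Hochschild--Serre spectral sequence splitting off the ideal $\fn^j_{\C}$ of $\fp^j_{\C}$, together with the triviality of the $\fn^j$-action on $\Ind^L_{P_L}(\gamma_L)$, reduces this (exactly as in \eqref{eq:reduction}) to the vanishing, for each $p\in[0,l]$ and $k$ large, of $H_p(L^j\ltimes(L\cap\sigma_jN),\Ind^L_{P_L}(\gamma_L)\otimes H_{l-p}(\fn^j_{\C},\Sym^k(\fg/(\fh^j+\fp))^{\vee}_{\C}\otimes W))$, in which $H_{l-p}(\fn^j_{\C},-)$ is, as an $\fs^{\sigma_j}$-module, a subquotient of $\wedge^{\ast}\fn^j_{\C}\otimes\Sym^k(\fg/(\fh^j+\fp))^{\vee}_{\C}\otimes W$, so its $\fs^{\sigma_j}$-weights are still $k$-fold sums of restrictions of roots in $\Sigma_N\cap\sigma_j\Sigma_N$ up to a perturbation bounded independently of $k$. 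Since $\gamma_L$ is finite-dimensional and $P_L$ is minimal, one more pass of the orbit decomposition of Section~\ref{sec:5-1} inside $L$ --- for the finitely many double cosets $(L^j\ltimes(L\cap\sigma_jN))\backslash L/P_L$, with representatives $l_c$ and involutions $\sigma_{j,c}$ preserving $\fs$ --- followed by Lemma~\ref{lem:bound} and \eqref{eq:pieces}, reduces the problem to the vanishing, for $k$ large, of finitely many relative Lie algebra homology groups to which Lemma~\ref{lem:spectral seq} applies, the element $X\in\fs^{\sigma_{j,c}}\subset\fz(\fl^{j,c}_L)$ being supplied by Lemma~\ref{lem:positive} for $(G,\sigma,P)=(G,\sigma_{j,c},P_{\min})$. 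The eigenvalues of $X$ on the module appearing there are a sum of $k$ times strictly positive values of roots (coming from the surviving $\Sym^k$-factor) and a term bounded independently of $k$; hence for $k$ large they are nonzero, and Lemma~\ref{lem:spectral seq} gives the desired vanishing, so $d_{l,j,k}=0$ for $k$ large.

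The step I expect to be the main obstacle is this last one: one must track the factor $\Sym^k(\fg/(\fh^j+\fp))^{\vee}_{\C}$ accurately through the $\fn^j$-homology and the conjugation by $l_c$, and verify that the element $X$ produced by Lemma~\ref{lem:positive} at the minimal-parabolic stage still acts with strictly positive eigenvalues on its image there, so that increasing $k$ really does push the $X$-eigenvalues away from $0$, while the remaining contributions (from $\gamma_L$, from $W$, and from the exterior powers of the nilradicals involved) stay bounded uniformly in $k$. The latter is immediate; the former is where the description $\fg/(\fh^j+\fp)\cong(\bar\fn\cap\sigma_j\bar\fn)^{-\sigma_j}$ and the precise shape of the cone in Lemma~\ref{lem:positive} are essential, and is the one point genuinely beyond what is already carried out in the proof of Lemma~\ref{lem:finite}.
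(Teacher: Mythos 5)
There is a genuine gap, and it sits exactly where you flag it. Your plan runs the whole machinery of Lemma \ref{lem:finite} (composition series, Casselman's subrepresentation theorem, induction on $l$, a second orbit decomposition of $L$ under $L^j\ltimes(L\cap\sigma_jN)$) while dragging the factor $\Sym^k(\fg/(\fh^j+\fp))^{\vee}_{\C}$ along, and at the end you need the element $X'\in\fs^{\sigma_{j,c}}$ furnished by Lemma \ref{lem:positive} for $(G,\sigma_{j,c},P_{\min})$ to act with strictly positive eigenvalues on the $l_c$-conjugate of that symmetric power. Nothing in the paper, and nothing in your sketch, gives this: Lemma \ref{lem:positive} controls the sign of $X'$ only on the roots in $\Xi$ attached to the pair $(\sigma_{j,c},P_{\min})$, i.e.\ on the second-pass nilradicals and the second-pass conormal directions $(\fl/(\fl^{j,c}+(\fl\cap\sigma_{j,c}\fn)+\fp_L))^{\vee}$. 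The weights of $(\fg/(\fh^j+\fp))^{\vee}$ are positive on the first-pass element $X\in\fs^{\sigma_j}$, but after conjugation by $l_c$ their values on $X'$ (equivalently, the eigenvalues of $\Ad(l_c)X'\in\fl^j$ on $(\fg/(\fh^j+\fp))^{\vee}$) are uncontrolled and may be zero or negative. Since these contributions grow linearly in $k$, you cannot conclude that the graded pieces of the second orbit decomposition vanish for large $k$ uniformly in the second normal-derivative degree $m$; and such uniformity is indispensable, because the analogue of Lemma \ref{lem:bound} bounds the homology you want by the sum over all $m$, so every term must die once $k$ is large. So the proposed final step is not a routine verification: it is an unproved claim on which the whole argument rests, and the cone produced by Lemma \ref{lem:positive} does not yield it.

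The detour is also unnecessary, and this is how the paper proceeds. Lemma \ref{lem:spectral seq} does not require the module $V$ to be finite dimensional or induced from a minimal parabolic, only $Z$-finite; since $\gamma$ has finite length, $\gamma_{\text{$(K\cap L^j)$-fin}}$ is $Z(\fl_{\C})$-finite, and Lemma \ref{lem:n-homology2} makes $H_p((\fl\cap\sigma_j\fn)_{\C},\gamma_{\text{$(K\cap L^j)$-fin}})$ a $Z((\fl\cap\sigma_j\fl)_{\C})$-finite module, so the eigenvalues of the single first-pass element $X\in\fs^{\sigma_j}\cap\fz(\fl\cap\sigma_j\fl)$ of Lemma \ref{lem:positive} (applied to $(G,\sigma_j,P)$, not to the minimal parabolic) on these homology groups lie in a bounded set independent of $k$, while the same $X$ has eigenvalues growing at least linearly in $k$ on $\Sym^k(\fg/(\fh^j+\fp))^{\vee}_{\C}$ and nonnegative ones on $\wedge^{\ast}\fn^j_{\C}$. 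Starting from \eqref{eq:pieces} and Lemma \ref{lem:factor}, one application of Lemma \ref{lem:spectral seq} with $(\fg,\fl,\fl',\fv,V)=(\fl,\fl\cap\sigma_j\fl,\fl^j,\fn^j,\gamma_{\text{$(K\cap L^j)$-fin}})$ then gives $d_{l,j,k}=0$ for large $k$, with no Casselman reduction and no second orbit decomposition --- avoiding precisely the step your proposal cannot justify.
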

\begin{proof}
Fix $l\in\N, j\in[1,j_0]$.
Since $H^j$ is reductive, the modular function $\delta_{H^j}$ of $H^j$ is trivial.
From \eqref{eq:pieces} and Lemma \ref{lem:factor}, $d_{l,j,k}$ equals the dimension of
\[
H_l((\fl^j+(\fl\cap\sigma_j\fn))_{\C}\ltimes\fn^j_{\C},K\cap L^j;\gamma_{\text{$(K\cap L^j)$-fin}}\otimes\Sym^k(\fg/(\fh^j+\fp))^{\vee}_{\C}\otimes{}^{g_j}\chi^{\vee}\cdot\delta_P^{1/2}\cdot\delta_{P^j}^{-1}).
\]

If $\fg=\fh^j+\fp$, then $\Sym^k(\fg/(\fh^j+\fp))^{\vee}_{\C}=0$ for $k\ge 1$ and there is nothing to prove.
Hence we may assume $\fg\neq\fh^j+\fp$.

By Lemma \ref{lem:positive}, there exists $X\in\fs^{\sigma_j}\cap\fz(\fl\cap\sigma_j\fl)$ where the eigenvalues of the adjoint actions of $X$ on $\fn^j, (\fg/(\fh^j+\fp))^{\vee}$ are all positive.
Lemma \ref{lem:n-homology2} shows that $H_p(\fl_{\C}\cap\sigma_j\fn_{\C},\gamma_{\text{$(K\cap L^j)$-fin}})$ is $Z(\fl_{\C}\cap\sigma_j\fl_{\C})$-finite.
Hence the real parts of the eigenvalues of the action of $X$ on
\[
H_p(\fl_{\C}\cap\sigma_j\fn_{\C},\gamma_{\text{$(K\cap L^j)$-fin}})\otimes\wedge^{\ast}\fn^j_{\C}\otimes
\Sym^k(\fg/(\fh^j+\fp))^{\vee}_{\C}\otimes{}^{g_j}\chi^{\vee}\cdot\delta_P^{1/2}\cdot\delta_{P^j}^{-1})
\]
are positive for sufficiently large $k$ from the assumption $\fg\neq\fh^j+\fp$.
Then the assertion follows from Lemma \ref{lem:spectral seq} (consider the case $(\fg,\fl,\fl',\fv,V)=(\fl,\fl\cap\sigma_j\fl,\fl^j,\fn^j,\gamma_{\text{$(K\cap L^j)$-fin}})$).
\end{proof}

\begin{proposition}\label{prop:finite}
Let $G$ be a real reductive group, $H$ a symmetric subgroup of $G$, $\pi$ an SAF representation $\pi$ of finite length of $G$, and $\chi$ a finite-dimensional representation of $H$.
For any $l\in\N$, $H_l(H,\pi\otimes\chi)$ is finite-dimensional.
\end{proposition}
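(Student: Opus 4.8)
The plan is to reduce everything to minimal principal series and then run an induction on the homological degree, feeding on the three finiteness/vanishing lemmas of this section. Since $\chi$ is finite-dimensional, so is $\chi^{\vee}$, and we may apply the results of Sections \ref{sec:5-1} and \ref{sec:5-2} with the finite-dimensional representation denoted ``$\chi$'' there taken to be $\chi^{\vee}$; correspondingly ``$\chi^{\vee}$'' there becomes our $\chi$.

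First I would dispose of the principal series case. Let $P_{\min}=M_{\min}A_{\min}N_{\min}$, set $L_{\min}:=M_{\min}A_{\min}$, let $\gamma$ be an arbitrary finite-dimensional SF representation of $L_{\min}$ inflated to $P_{\min}$, and put $\Pi:=\Ind_{P_{\min}}^G(\gamma)$. By Lemma \ref{lem:finite}, $d_{l+1,j,k}<\infty$ for all $j\in[1,j_0]$ and $k\in\N$; by Lemma \ref{lem:asymptotic}, $d_{l,j,k}=0$ for all sufficiently large $k$. As $\gamma$ is finite-dimensional, Lemma \ref{lem:bound} then yields that $H_l(H,\Pi\otimes\chi)$ is finite-dimensional for every $l\in\N$.

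For general $\pi$ I would argue by induction on $l$, assuming $H_{l'}(H,\tau\otimes\chi')$ is finite-dimensional for every $l'<l$, every finite-length SAF representation $\tau$ of $G$, and every finite-dimensional representation $\chi'$ of $H$ (no hypothesis is needed when $l=0$, since the relevant $H_{-1}$-term below vanishes). Passing to a composition series of $\pi$ and applying the long exact sequence of Lemma \ref{lem:long exact} to the short exact sequences built from successive subquotients, one reduces to the case where $\pi$ is irreducible; only the degree-$l$ portions of those sequences enter, so this step does not touch lower degrees. If $\pi$ is irreducible, Casselman's subrepresentation theorem applied to $\pi^{\vee}$ provides a finite-dimensional SF representation $\gamma$ of $L_{\min}$ with $\pi^{\vee}\hookrightarrow\Ind_{P_{\min}}^G(\gamma^{\vee})$; since $\Ind_{P_{\min}}^G(\gamma^{\vee})\cong\bigl(\Ind_{P_{\min}}^G(\gamma)\bigr)^{\vee}$ on Casselman--Wallach globalizations, $\pi$ is a quotient of $\Pi:=\Ind_{P_{\min}}^G(\gamma)$. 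Let $\pi'$ denote the kernel, again a finite-length SAF representation. From the short exact sequence $0\to\pi'\to\Pi\to\pi\to 0$ and Lemma \ref{lem:long exact} we get the exact sequence
\[
H_l(H,\Pi\otimes\chi)\to H_l(H,\pi\otimes\chi)\to H_{l-1}(H,\pi'\otimes\chi),
\]
whence $\dim H_l(H,\pi\otimes\chi)\le\dim H_l(H,\Pi\otimes\chi)+\dim H_{l-1}(H,\pi'\otimes\chi)$. The first summand is finite by the principal series case, the second by the inductive hypothesis in degree $l-1$; this closes the induction.

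The proposition is thus essentially formal once Lemmas \ref{lem:bound}, \ref{lem:finite} and \ref{lem:asymptotic} are in hand, and the real work sits upstream: the finiteness of $H\backslash G/P_{\min}$ with representatives $g_j$ normalizing $\fs$ (via Matsuki's theorem and the inner-type hypothesis), and the positivity statement of Lemma \ref{lem:positive} that powers the vanishing criterion Lemma \ref{lem:spectral seq}. Accordingly the only points that need genuine attention here are organizing the induction on homological degree and checking that every reduction -- to composition factors, and to the principal series cover via Casselman -- keeps us within the class of finite-length SAF representations with $\chi$ finite-dimensional; I do not anticipate a serious obstacle beyond this bookkeeping.
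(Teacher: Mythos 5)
Your argument is essentially the paper's own proof: reduce to irreducible $\pi$ via a composition series and Lemma \ref{lem:long exact}, realize $\pi$ as a quotient of a minimal principal series $\Ind^G_{P_{\min}}(\gamma)$ by (the dual form of) Casselman's subrepresentation theorem, settle the principal series case by Lemmas \ref{lem:finite}, \ref{lem:asymptotic} and \ref{lem:bound}, and close an induction on the homological degree using the finite-length kernel — this is exactly the scheme the paper follows (it is the ``first half of the proof of Lemma \ref{lem:finite}'' pattern).

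The one point you leave unaddressed is the scope of the lemmas you invoke: the proposition allows an arbitrary real reductive $G$, whereas Lemmas \ref{lem:finite} and \ref{lem:asymptotic} (and the choice of representatives $g_j$ with $\sigma_j$ preserving $\fs$, via Matsuki's theorem) are proved under the standing hypothesis of Section \ref{sec:5-2} that $G$ is of inner type. You note that the inner-type hypothesis is used upstream, but you never reduce the general case to it, so as written your appeal to these lemmas is outside their stated hypotheses (e.g.\ an orthogonal group is not of inner type). The paper disposes of this in one line: one may assume $G$ and $H$ are connected, hence of inner type — using that $\pi$ restricted to the identity component still has finite length and that passing between $H$ and its finite-index identity component does not affect finite-dimensionality of Schwartz homology. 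Adding that reduction makes your proof complete.
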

\begin{proof}
The proof is similar to the first half of the proof of Lemma \ref{lem:finite}.
We may assume that $G$ and $H$ are connected (hence are of inner type), and that $\pi$ is irreducible. 
Let $P_{\min}=L_{\min}N_{\min}$ be the standard minimal parabolic subgroup of $G$.
The Casselman's subrepresentation theorem shows that $\pi$ can be written as a quotient of $\Pi=\Ind^G_{P_{\min}}(\gamma)$ for some irreducible representation $\gamma$ of $L_{\min}$.
Recall that any finite-dimensional continuous representation $\chi$ of the real reductive group $H$ is an SAF representation.
By Lemmas \ref{lem:bound}, \ref{lem:finite} and \ref{lem:asymptotic}, $H_l(H,\Pi\otimes\chi)$ is finite-dimensional for any $l\in\N$.
Then the long exact sequence (Lemma \ref{lem:long exact}) and the induction argument show the finite-dimensionality of $H_l(H,\pi\otimes\chi)$ for any $l\in\N$.
\end{proof}

\subsection{Normal derivatives and stable Levi subgroups}\label{sec:5-3}
Retain notation in Section \ref{sec:5-2}.
In this section,  we prove vanishing of $d_{l,j,k}$ assuming the following conditions on $(G,H,P,\gamma,\chi)$.
\begin{enumerate}[(A)]
\item\label{cond:rho}
Any simple component of $(\fg,\fh)$ is one of the following symmetric pairs;
\begin{enumerate}[(1)]
\item $(\fg'\oplus\fg',\fg')$ or $(\fg'_{\C},\fg')$ where $\fg'$ denotes a real simple Lie algebra,
\item $(\mathfrak{sl}_{2n}(\R),\mathfrak{sl}_n(\C)+\sqrt{-1}\R)$ or $(\mathfrak{sl}_n(\H),\mathfrak{sl}_n(\C)+\sqrt{-1}\R)$, or
\item $(\fe_{6(6)},\mathfrak{sl}_3(\H)\oplus\mathfrak{su}(2))$ or $(\fe_{6(-26)},\mathfrak{sl}_3(\H)\oplus\mathfrak{su}(2))$.
\end{enumerate}
\item\label{cond:positive}
The parabolic subgroup $P$ is cuspidal, and the representation $\gamma$ of $L=MA$ is written as $\tau_{\lambda}=\tau\boxtimes\exp(\lambda)$, where $\tau$ denotes a square integrable SAF representation of $M$ and $\lambda$ denotes an element of $\fa^{\vee}_{\C}$ with $\re\lambda(X_{\alpha})\ge 0$ for any $\alpha\in\Sigma^+$ (see the sentence before Lemma \ref{lem:positive} for the definition of $X_{\alpha}$).
\item\label{cond:gamma}
Any simple ideal of $\fl$ is compact or $\mathfrak{sl}_2(\R)$.
\item\label{cond:char}
The finite-dimensional representation $\chi$ of $H$ is a character.
\end{enumerate}

\begin{remark}
For any linear connected semisimple Lie group $G$, any irreducible SAF representation of $G$ can be written as a quotient of $\Ind^G_P(\gamma)$ with $(P,\gamma)$ satisfying the condition \eqref{cond:positive} above (see \cite[Theorem 14.92]{Kna01} for example).
\end{remark}

\begin{remark}\label{rem:cuspidal}
Assume $\fg=\fg_{\C}',\sl_n(\H),\fe_{6(-26)}$, where $\fg'$ denotes a real simple Lie algebra.
Since cuspidal parabolic subalgebras of $\fg$ are minimal, \eqref{cond:positive} implies \eqref{cond:gamma}.
\end{remark}

Let us define
\begin{align*}
\cJ&:=\Set{j\in[1,j_0]|\sigma_j\fl=\fl}.
\end{align*}
If $j\in\cJ$, then $\sigma_j$ preserves $L$ and the character $\delta_{L\cap\sigma_jN}$ of $P^j$ is trivial (see the sentence before Lemma \ref{lem:rho} for the definition).
\begin{theorem}\label{thm:bound}
Assume conditions \eqref{cond:rho}--\eqref{cond:char}.
Then we have 
\[\dim\Hom_H(\Ind^G_P(\gamma),\chi)\le \sum_{j\in\cJ}\dim\Hom_{L^{\sigma_j}}(\gamma,{}^{g_j}\chi).\]
\end{theorem}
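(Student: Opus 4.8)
The goal is to bound $\dim\Hom_H(\Ind^G_P(\gamma),\chi)$, which by Lemma \ref{lem:Hausdorff} and Proposition \ref{prop:finite} is $\dim H_0(H,\Ind^G_P(\gamma)\otimes\chi^{\vee})$. By Lemma \ref{lem:bound} (whose hypotheses are met thanks to Lemmas \ref{lem:finite} and \ref{lem:asymptotic}) this is at most $\sum_{j=1}^{j_0}\sum_{k=0}^\infty d_{0,j,k}$. So the entire content of the theorem is the two vanishing statements: (i) $d_{0,j,k}=0$ whenever $k>0$, i.e.\ higher normal derivatives along the orbit $\cO_j$ do not contribute to the zeroth homology; and (ii) $d_{0,j,0}=0$ whenever $\sigma_j\fl\neq\fl$, i.e.\ the open-stratum contribution of a non-stable orbit also vanishes. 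Granting both, the sum collapses to $\sum_{j\in\cJ}d_{0,j,0}$, and it remains to identify $d_{0,j,0}=\dim H_0(H,(\rho_{j,0}/\rho_{j,1})\otimes\chi^\vee)$ with $\dim\Hom_{L^{\sigma_j}}(\gamma,{}^{g_j}\chi)$ for $j\in\cJ$.

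For the identification of the leading term when $j\in\cJ$: by \eqref{eq:pieces} with $k=0$, $d_{0,j,0}$ is the dimension of $H_0(\fp^j_\C,K\cap L^j;\gamma_{(K\cap L^j)\text{-fin}}\otimes{}^{g_j}\chi^\vee\cdot\delta_{H^j}\cdot\delta_P^{1/2}\cdot\delta_{P^j}^{-1})$. Since $j\in\cJ$, $\sigma_j$ preserves $L$, so $P^j=L^j\ltimes(L\cap\sigma_jN)$ by Lemma \ref{lem:factor}, with $L\cap\sigma_jN$ unipotent hence $\delta_{L\cap\sigma_jN}$ trivial; moreover $\delta_{H^j}$ is trivial as $H^j$ is reductive. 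Condition \eqref{cond:rho} puts us in the situation of Lemma \ref{lem:rho}, which gives $\delta_P^{1/2}\cdot\delta_{H\cap P}^{-1}=\delta_{L\cap\sigma N}^{-1/2}$; transporting this along $\Psi(g_j)$ shows the twisting characters $\delta_P^{1/2}\cdot\delta_{P^j}^{-1}$ restrict trivially to $L^j$. Running the Hochschild–Serre spectral sequence for the extension $L\cap\sigma_jN\lhd P^j$ and using that $\fn$ (hence $\fl\cap\sigma_j\fn$) acts nilpotently on everything in sight, the zeroth homology reduces to $H_0(L^j,\gamma\otimes{}^{g_j}\chi^\vee)$, and Lemma \ref{lem:Hausdorff} again (with finite-dimensionality from Proposition \ref{prop:finite} applied inside $L$) identifies this dual with $\Hom_{L^{\sigma_j}}(\gamma,{}^{g_j}\chi)$.

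The two vanishing statements are where the real work is, and I expect statement (ii)—the vanishing of $d_{0,j,0}$ when $\sigma_j\fl\neq\fl$—to be the main obstacle. Both should be attacked via Lemma \ref{lem:spectral seq}, exactly as in the proofs of Lemmas \ref{lem:finite} and \ref{lem:asymptotic}, but now I need \emph{strict} positivity (not just "for large $k$") and at homological degree $l=0$. For (i): apply Lemma \ref{lem:positive} to $(G,\sigma_j,P)$ to produce $X\in\fs^{\sigma_j}\cap\fz(\fl\cap\sigma_j\fl)$ acting with positive eigenvalues on $\fn^j$ and on $(\fg/(\fh^j+\fp))^\vee$; condition \eqref{cond:positive} (the dominance $\re\lambda(X_\alpha)\ge0$) ensures that the $Z(\fl_\C\cap\sigma_j\fl_\C)$-finite pieces of $H_0(\fl_\C\cap\sigma_j\fn_\C,\gamma_{K\text{-fin}})$ contribute nonnegative real parts, so that for $k\ge1$ the eigenvalue of $X$ on $H_p(\fn_\C,\gamma)\otimes\wedge^q\fv_\C\otimes\Sym^k(\cdots)^\vee\otimes{}^{g_j}\chi^\vee\cdot(\text{twist})$ is \emph{strictly} positive (the $\Sym^k$ factor with $k\ge1$ supplies the strict part). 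The twisting character $\delta_P^{1/2}\cdot\delta_{P^j}^{-1}$ must be controlled here too, and this is precisely where I would invoke Lemma \ref{lem:rho}'s computation; condition \eqref{cond:gamma} guarantees $\fl$ has only compact or $\sl_2(\R)$ simple ideals, pinning down the relevant $\SO(2)^\sim$-weights of $\gamma$ (via the explicit $\SO(2)$-types recorded before Lemma \ref{lem:irrepR}) so that the character $\chi$ of $H$, restricted through \eqref{cond:char}, cannot cancel the positivity. Then Lemma \ref{lem:spectral seq} with $l=0$ kills the homology. For (ii): when $\sigma_j\fl\neq\fl$, the point is that $\fh^j+\fp\neq\fg$ forces a nontrivial positive eigenvalue already at $k=0$; more precisely one shows that some $X\in\fs^{\sigma_j}\cap\fz(\fl)$ acts with strictly positive real part on $H_0(\fn^j_\C\ltimes\cdots,\gamma_{K\text{-fin}}\otimes{}^{g_j}\chi^\vee\cdot(\text{twist}))$, using that $\fl\cap\sigma_j\fn\neq0$ (that is what $\sigma_j\fl\neq\fl$ means together with $\sigma_j\fs=\fs$) to produce a strictly dominant direction, again combining Lemma \ref{lem:positive}, the dominance of $\lambda$ from \eqref{cond:positive}, the $\SO(2)$-type bound from \eqref{cond:gamma}, and the $\delta$-character bookkeeping from Lemma \ref{lem:rho}. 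The delicate step throughout is verifying that the \emph{half-density and modular twists do not spoil strict positivity}—i.e.\ that $\re\big(\langle\lambda,X\rangle + \tfrac12\langle\rho_P,X\rangle - \langle\rho_{P^j},X\rangle - \langle{}^{g_j}\chi,X\rangle\big)>0$—and this is exactly the arithmetic that conditions \eqref{cond:rho}, \eqref{cond:gamma}, \eqref{cond:char} are engineered to make work; I would carry out that inequality case-by-case along the list in \eqref{cond:rho} (or reduce to the three model pairs as in Lemma \ref{lem:rho}).
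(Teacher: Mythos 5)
Your overall skeleton is the same as the paper's: reduce via Lemma \ref{lem:Hausdorff}, Proposition \ref{prop:finite} and Lemma \ref{lem:bound} to the two vanishing statements $d_{0,j,k}=0$ for $k>0$ and for $j\notin\cJ$, identify the leading term for $j\in\cJ$ (your identification is essentially right: for $j\in\cJ$ one has $\fl\cap\sigma_j\fn=0$, the twist $\delta_P^{1/2}\delta_{P^j}^{-1}=\delta_{L\cap\sigma_jN}^{-1/2}$ is trivial, and $\fn^j$ acts trivially on $\gamma\otimes{}^{g_j}\chi^{-1}$), and attack the vanishing through Lemma \ref{lem:spectral seq} with an element $X$ from Lemma \ref{lem:positive}, conditions \eqref{cond:rho} and \eqref{cond:char} entering through Lemma \ref{lem:rho} (in particular $X\in\fs^{\sigma_j}\cap[\fg,\fg]=\fs^{\sigma_j}\cap[\fh^j,\fh^j]$, so the ${}^{g_j}\chi$-term really does drop out). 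The genuine gap is in the decisive positivity estimate, precisely where you locate the difficulty but with the wrong mechanism. The inequality you propose to verify case-by-case, $\re\bigl(\lambda(X)+\tfrac12\rho_P(X)-\rho_{P^j}(X)-{}^{g_j}\chi(X)\bigr)>0$, omits the essential term and is false in general: by Lemma \ref{lem:rho} the modular contribution equals $-\rho_{\fl\cap\sigma_j\fn}(X)$, which is strictly \emph{negative} when $j\notin\cJ$ (properties \eqref{ind:second}--\eqref{ind:third} of $X$), the $\chi$-term is zero, and condition \eqref{cond:positive} only gives $\re\lambda(X)\ge 0$ (e.g.\ $\lambda$ purely imaginary for tempered data), so at $k=0$ your quantity can be $\le 0$. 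Likewise your claim in case (i) that dominance of $\lambda$ makes the eigenvalues of $X$ on $H_0(\fl_{\C}\cap\sigma_j\fn_{\C},\gamma_{\Kfin})\otimes\delta_{L\cap\sigma_jN}^{-1/2}$ nonnegative is exactly the nontrivial point, not a consequence of \eqref{cond:positive}. What is missing is the computation of the exponents of the coinvariants of $\tau$: after normalizing the representatives so that $\Sigma_L\cap\sigma_j\Sigma_N\subset\Sigma^+$, condition \eqref{cond:gamma} makes $\fm\cap\sigma_j\fn$ a sum of nilradicals $\fn_i$ of $\sl_2(\R)$-ideals, $\tau$ restricts on each to a square integrable representation of a cover of $\PSL_2(\R)$, and (Casselman's comparison theorem plus the explicit $\SO(2)^{\sim}$-types, or \cite[Lemma 2.7]{Wal84}) $H_0(\fn_{i,\C},\tau^1_i)$ is one-dimensional of $\fs$-weight $(k_i+1)\rho_{\fn_i}$ with $k_i>0$. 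The twist $\delta_{L\cap\sigma_jN}^{-1/2}$ cancels exactly the ``$+1$'', leaving the strictly positive residue $\sum_i k_i\rho_{\fn_i}(X)$; it is this term, not $\lambda$, that kills $d_{0,j,0}$ for $j\notin\cJ$ and combines with the positive $\Sym^k$-weights for $k>0$. This is also exactly where square integrability of $\tau$ in \eqref{cond:positive} is used: for a merely tempered factor the exponent could equal $\rho_{\fn_i}$, the residue would vanish, and the case-by-case arithmetic you describe could not close the argument.

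Beyond this, the remaining omissions are technical rather than conceptual: one must justify passing the (completed) tensor factors $\tau_{\cpt},\tau^2,\tau^3$ through the $\fn$-homology (admissibility with respect to $K\cap L^j$, isotypic decomposition, nuclearity as in \eqref{eq:example}), and note that for $j\in\cJ$ the relevant unipotent piece is $N^j$ rather than $L\cap\sigma_jN$ (which is trivial there), so the Hochschild--Serre step you invoke degenerates harmlessly. But without the discrete-series exponent computation above, the two vanishing statements --- and hence the theorem --- are not proved.
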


\begin{proof}
By Lemma \ref{lem:Hausdorff} and Proposition \ref{prop:finite}, we have 
\[
H_0(H,\Ind^G_P(\gamma)\cdot\chi^{-1})^{\vee}\cong\Hom_H(\Ind^G_P(\gamma),\chi),\quad H_0(L^{\sigma_j},\gamma\cdot{}^{g_j}\chi^{-1})^{\vee}\cong\Hom_{L^{\sigma_j}}(\gamma,{}^{g_j}\chi).\]
From Lemma \ref{lem:bound} and Proposition \ref{prop:finite},
it suffices to prove 
$d_{0,j,k}=0$ if $j\in[1,j_0]\setminus\cJ$ or $k>0$.
As in the proof of Lemma \ref{lem:asymptotic}, $d_{0,j,k}$ equals the dimension of 
\[
H_0((\fl^j+(\fl\cap\sigma_j\fn))_{\C}\ltimes\fn^j_{\C},K\cap L^j;\gamma_{\text{$(K\cap L^j)$-fin}}\otimes\Sym^k(\fg/(\fh^j+\fp))^{\vee}_{\C}\otimes{}^{g_j}\chi^{-1}\cdot\delta_P^{1/2}\cdot\delta_{P^j}^{-1}).
\]

By multiplying $g_j$ $(1\le j_0)$ by elements of $L$ from the right if necessary, we may assume
\begin{align}\label{eq:lpositive}
\Sigma_L\cap\sigma_j\Sigma_N\subset\Sigma^+.
\end{align}
We remark that this does not change the definition of $\cJ, d_{l,j,k}$ for any $l,j,k$.
Then there exists $X\in\fs^{\sigma_j}\cap[\fg,\fg]\cap\fz(\fl\cap\sigma_j\fl)$ such that
\begin{enumerate}[(i)]
\item\label{ind:iii}
$X$ belongs to $\sum_{\alpha\in\Sigma^+}\R_{>0}X_{\alpha}$,
\item\label{ind:i}
the eigenvalues of the adjoint action of $X$ on $(\fg/(\fh^j+\fp))^{\vee}_{\C}$ are all positive,
\item\label{ind:ii}
$\alpha(X)>0$ for any $\alpha\in\Sigma_L\cap\sigma_j\Sigma_N$
\end{enumerate}
from Lemma \ref{lem:positive}.
Our assumption \eqref{cond:rho} shows $\delta_P^{1/2}\cdot\delta_{P^j}^{-1}=\delta_{L\cap \sigma_jN}^{-1/2}$ and $\fs^{\sigma_j}\cap[\fg,\fg]=\fs^{\sigma_j}\cap[\fh^j,\fh^j]$ by Lemma \ref{lem:rho}.
By the assumption \eqref{cond:char}, $X\in\fs^{\sigma_j}\cap[\fg,\fg]=\fs^{\sigma_j}\cap[\fh^j,\fh^j]$ acts trivially on ${}^{g_j}\chi$.
Therefore Lemma \ref{lem:spectral seq} reduces the assertion to proving that the eigenvalues of the action of $X$ on 
\begin{align}\label{eq:target}
H_0(\fl_{\C}\cap\sigma_j\fn_{\C},\gamma_{\text{$(K\cap L^j)$-fin}})\otimes\Sym^k(\fg/(\fh^j+\fp))^{\vee}_{\C}\otimes\delta_{L\cap \sigma_jN}^{-1/2}
\end{align}
are nonzero if $j\in[1,j_0]\setminus\cJ$ or $k>0$.

Since $K\cap L^j$ has finitely many connected components, the assumption \eqref{cond:positive} shows
\begin{align}\label{eq:target2}
H_0(\fl_{\C}\cap\sigma_j\fn_{\C},\gamma_{\text{$(K\cap L^j)$-fin}})=H_0(\fm_{\C}\cap\sigma_j\fn_{\C},\tau_{\text{$(\fk\cap\fl^j)$-fin}})\otimes\exp(\lambda)
\end{align}
as $(\fl^j_{\C},(K\cap L^j)\times A)$-modules, where $(\cdot)_{\text{$(\fk\cap\fl^j)$-fin}}$ denotes the subspace of $U(\fk_{\C}\cap\fl^j_{\C})$-finite vectors.
Let us consider the eigenvalues of the action of $X$ on $H_0(\fm_{\C}\cap\sigma_j\fn_{\C},\tau_{\text{$(\fk\cap\fl^j)$-fin}})$.
Let $\fii$ be a noncompact simple ideal in $\fl$. By the assumption \eqref{cond:gamma}, $\fii\cong\sl_2(\R)$ and $\fii^{\theta}\cong\so(2)$.
By the assumption \eqref{cond:rho} and Remark \ref{rem:cuspidal}, the ideal $\fii$ is contained in a simple component of the symmetric pair $(\fg,\fh)$ isomorphic to $(\fg'\oplus\fg',\fg'), (\sl_{2n}(\R),\sl_n(\C)+\sqrt{-1}\R)$ or $(\fe_{6(6)},\sl_3(\H)\oplus\mathfrak{su}(2))$.
Since $\fii$ has real rank one, either of the following holds.
\begin{enumerate}[(I)]
\item\label{ind:1}
$\sigma_j\fii\cap\fn\neq 0$.
\item\label{ind:3}
$\sigma_j\fii$ is an ideal of $\fl$ and $\sigma_j\fii\neq\fii$.
\item\label{ind:2}
$\sigma_j\fii=\fii$. In this case, $\sigma_j$ acts on $\fii\cap\fs$ by the scalar multiple $(-1)$ by Lemma \ref{lem:extension} and the space $\fii^{\sigma_j}\cap\fk$ is one-dimensional.
\end{enumerate}
Set $n_1, 2n_2, n_3$ to be the number of noncompact simple ideals $\fii$ satisfying the condition \eqref{ind:1}, \eqref{ind:3}, \eqref{ind:2}, respectively.
We remark that $n_1=0$ if and only if $j\in\cJ$.
Then we obtain the following decomposition
\begin{align}
\fm&=\fm_{\cpt}\oplus\fm^1\oplus\fm^2\oplus\fm^3,\\
\fm^1=\bigoplus_{1\le i\le n_1}\sl_2(\R)^1_i,\quad
\fm^2&=\bigoplus_{1\le i\le n_2}\sl_2(\R)_{i,1}^2\oplus\sl_2(\R)^2_{i,2},\quad
\fm^3=\bigoplus_{1\le i\le n_3}\sl_2(\R)^3_i,
\end{align}
where $\fm_{\cpt}, \{\sl_2(\R)^1_i\}_{1\le i\le n_1}, \{\sl_2(\R)^3_i\}_{1\le i\le n_3}$ denote the maximal compact ideal in $\fm$, the noncompact simple ideals of $\fl$ satisfying \eqref{ind:1}, and those satisfying \eqref{ind:2}, respectively, and 
$\sigma_j$ interchanges $\sl_2(\R)^2_{i,1}$ and $\sl_2(\R)^2_{i,2}$ for $1\le i\le n_2$.
Put $\fn_i$ to be the nilradical of the minimal parabolic subalgebra of $\sl_2(\R)^1_i$ corresponding to $\Sigma(\sl_2(\R)^1_i,\sl_2(\R)^1_i\cap\fs)\cap\Sigma^+$ for $1\le i\le n_1$.
By the assumption \eqref{eq:lpositive}, $\fn_i=\sl_2(\R)^1_i\cap\sigma_j\fn$. 
We see
\begin{align}\label{eq:m}
\fm\cap\sigma_j\fn=\bigoplus_{1\le i\le n_1}\fn_i,\quad
\fk\cap\fl^j=(\fm_{\cpt})^j\oplus
(\fk\cap\fm^2)^j
\oplus\left(\bigoplus_{1\le i\le n_3}\so(2)^3_i\right).
\end{align}

Since $M$ has finitely many components, the irreducible SAF representation $\tau$ of $M$ can be written as a finite direct sum of the form
\[
\tau_0=\tau_{\cpt}\otimes\tau^1\hat{\otimes}\tau^2\hat{\otimes}\tau^3
\]
as representations of $M_0$, where $\hat{\otimes}$ denotes the tensor product of nuclear Fr\'echet spaces, $\tau_{\cpt}$ denotes an irreducible finite-dimensional $\fm_{\cpt}$-module and 
\[
\tau^1=\widehat{\bigotimes_{1\le i\le n_1}}\tau^1_i,\quad\quad
\tau^2=\widehat{\bigotimes_{1\le i\le n_2}}\tau^2_{i,1}\hat{\otimes}\tau^2_{i,2},\quad\quad
\tau^3=\widehat{\bigotimes_{1\le i\le n_3}}\tau^3_i
\]
for some square integrable SAF representations $\tau^1_i,\tau^2_{i,1},\tau^2_{i,2},\tau^3_i$ of finite covers of $\PSL_2(\R)$.
From \eqref{eq:m}, the representation $\tau^3$ is $(K\cap L^j)_0$-admissible. Hence 
\begin{align*}
(\tau_0)_{\text{$(\fk\cap\fl^j)$-fin}}
=\tau_{\cpt}\otimes(\tau^1\hat{\otimes}
\tau^2)_{\text{$(\fk\cap\fl^j)$-fin}}\otimes
\tau^3_{\text{$(\fk\cap\fl^j)$-fin}}.
\end{align*}
Write $\cU$ for the unitary dual of the analytic subgroup of $\fk\cap\fm^2$. 
For $\xi\in\cU$, write $\tau^2(\xi)$ for the $\xi$-isotypic component of $\tau^2$, which is a closed subspace.
Then we see 
$
(\tau^1\hat{\otimes}\tau^2)_{\text{$(\fk\cap\fl^j)$-fin}}=\bigoplus_{\xi\in\cU}\tau^1\hat{\otimes}\tau^2(\xi)
$ 
and obtain
\begin{align}\label{eq:tensor}
H_0(\fm_{\C}\cap\sigma_j\fn_{\C},(\tau_0)_{\text{$(\fk\cap\fl^j)$-fin}})
=\bigoplus_{\xi\in\cU}\tau_{\cpt}\otimes H_0(\fm_{\C}\cap\sigma_j\fn_{\C},\tau^1\hat{\otimes}\tau^2(\xi))\otimes\tau^3_{\text{$(\fk\cap\fl^j)$-fin}}.
\end{align}

Since $\fm\cap\sigma_j\fn$ is the nilradical of the minimal parabolic subalgebra of $\fm^1$ corresponding to $\Sigma^+$, Casselman's unpublished result (for the case of minimal parabolic groups, see \cite[Theorem 5.2]{LLY21}) implies
\begin{align*}
H_0(\fm_{\C}\cap\sigma_j\fn_{\C},\tau^1)
=H_0(\fm_{\C}\cap\sigma_j\fn_{\C},\tau^1_{\text{$(\fk\cap\fm^1)$-fin}})
=\bigotimes_{1\le i\le n_1}H_0(\fn_{i,\C},(\tau^1_i)_{\text{$\so(2)^1_i$-fin}}),
\end{align*}
where we used \eqref{eq:m} and 
$\tau^1_{\text{$(\fk\cap\fm^1)$-fin}}=\bigotimes_{1\le i\le n_1}(\tau^1_i)_{\text{$\so(2)_i$-fin}}$
at the last equality.
Since $\tau^1_i$ is a square integrable representation of the analytic subgroup of $\sl_2(\R)^1_i$, which is a finite cover of $\PSL_2(\R)$, there exist $\epsilon_i\in\{\pm\}$ and $k_i\in\Q_{>0}$ such that the infinitesimal character of $\tau^1_i$ equals $k_i\rho_{\fn_i}$ and 
\[(\tau^1_i)_{\text{$\so(2)^1_i$-fin}}=\bigoplus_{l\in\N}\xi_{\epsilon_i(k_i+1+2l)}\]
as $\so(2)^1_i$-modules (see \cite[Section XI.8]{KV95}, for example). 
See Section \ref{sec:intro-3} for the definition of $\xi_{\epsilon(1+k_i+2l)}$.
Then we see that $H_0(\fn,(\tau^1_i)_{\text{$\so(2)^1_i$-fin}})$ is one-dimensional and its $\fs$-weight equals $(k_i+1)\rho_{\fn_i}$ from a general result concerning zeroth $\fn$-homology groups of irreducible highest weight modules \cite[Lemma 2.7]{Wal84}.

Recall that tensoring with the nuclear Fr\'echet space $\tau^2(\xi)$ preserves short exact sequences of nuclear Fr\'echet spaces with continuous maps \cite[Lemma A.3]{CHM00}, and that the Schwartz homology groups can be written as the homology groups of complexes of nuclear Fr\'echet spaces \eqref{eq:standard}.
Therefore we have
\begin{align}\label{eq:example}
H_0(\fm_{\C}\cap\sigma_j\fn_{\C},\tau^1\hat{\otimes}\tau^2(\xi))
&\cong H_0(M\cap\sigma_jN,\tau^1\hat{\otimes}\tau^2(\xi))\\\notag
&\cong H_0(M\cap\sigma_jN,\tau^1)\otimes\tau^2(\xi)
\cong H_0(\fm_{\C}\cap\sigma_j\fn_{\C},\tau^1)\otimes\tau^2(\xi)
\end{align}
from the finite-dimensionality of $H_0(\fm_{\C}\cap\sigma_j\fn_{\C},\tau^1)$ and Lemmas \ref{lem:comparison} and \ref{lem:Hausdorff}. Hence \eqref{eq:tensor} shows
\begin{align*}
H_0(\fm_{\C}\cap\sigma_j\fn_{\C},(\tau_0)_{\text{$(\fk\cap\fl^j)$-fin}})
=\tau_{\cpt}\otimes H_0(\fm_{\C}\cap\sigma_j\fn_{\C},\tau^1)\otimes\tau^2_{\text{$(\fk\cap\fl^j)$-fin}}\otimes\tau^3_{\text{$(\fk\cap\fl^j)$-fin}}.
\end{align*}
Therefore the element $X$ in $\fs^{\sigma_j}\cap\fz(\fl\cap\sigma_j\fl)$ acts on 
$H_0(\fm_{\C}\cap\sigma_j\fn_{\C},(\tau_0)_{\text{$(\fk\cap\fl^j)$-fin}})\otimes\delta_{L\cap\sigma_jN}^{-1/2}$ by a scalar $\sum_{1\le i\le n_1}(k_i+1)\rho_{\fn_i}(X)-\rho_{\fl\cap\sigma_j\fn}(X)=\sum_{1\le i\le n_1}k_i\rho_{\fn_i}(X)$.

From the above arguments and \eqref{eq:target2}, the eigenvalues of $X$ on \eqref{eq:target} are of the form 
\begin{align}\label{eq:scalar}
\sum_{1\le i\le n_1}k_i\rho_{\fn_i}(X)+\lambda(X)+\sum_{1\le i\le k}\alpha_i(X),
\end{align}
where $\alpha_i$ denotes an $\fs^{\sigma_j}$-weight appearing in $(\fg/(\fh^j+\fp))^{\vee}$.
Now $k_i>0$ and \eqref{ind:ii} implies $k_i\rho_{\fn_i}(X)>0$ for any $i\in[1,n_1]$.
Moreover, $\re \lambda(X)\ge 0$ by the assumption \eqref{cond:positive} and \eqref{ind:iii}.
Furthermore, \eqref{ind:i} shows $\alpha_i(X)>0$ for any $i\in[1,k]$.
Therefore \eqref{eq:scalar} is nonzero unless $n_1=k=0$, which is the desired conclusion.
\end{proof}

\subsection{Proof of Theorem \ref{thm:key-intro} and Corollary \ref{cor:dual}}\label{sec:5-4}
Let $(G,H)=(\GL_{2n}(\R),\GL_n(\C))$ or $(\GL_n(\H),\GL_n(\C))$.
In this section, we first recall from \cite{Cho19}  
the orbit decomposition $H\backslash G/P$ and the description of the associated involutions $\{\sigma_j\}$ on $G$.
Then we apply Theorem \ref{thm:bound} to obtain Theorem \ref{thm:key-intro}. Moreover, we prove Corollary \ref{cor:dual}.

We take a Cartan involution $\theta$ on $G$ given by $\theta(g)={}^t\bar{g}^{-1}$ for $g\in G$ and set $K=G^{\theta}$.
Moreover, we define an involution $\sigma$ on $G$ by $\Psi\left(\begin{pmatrix}
0&-J_n\\
J_n&0
\end{pmatrix}\right)$ if $G=\GL_{2n}(\R)$, and by $\Psi(\bfi I_n)$ if $G=\GL_n(\H)$, where $J_n$ denotes the anti-diagonal $n$-by-$n$ matrix with anti-diagonal entries one and $I_n$ denotes the $n$-by-$n$ unit matrix.
The group $H$ is identified with $G^{\sigma}$ via the embeddings $\C^n\to\R^{2n};(z_1,\ldots,z_n)\mapsto (\re(z_1),\ldots,\re(z_n),\im(z_n),\ldots,\im(z_1))$ and $\C\to\H;a+b\sqrt{-1}\mapsto a+b\bfi$.

Let $(n_1,\ldots,n_r)$ be a partition of $2n$ with $n_i\in\{1,2\}$ if $G=\GL_{2n}(\R)$, and a partition of $n$ with $n_i=1$ if $G=\GL_n(\H)$. In particular, we see $r=n$ if $G=\GL_n(\H)$.
We define
\begin{align*}
\fI:=
\begin{cases}
\Set{S\in M_r(\Z_{\ge 0})|{}^tS=S, \sum_{1\le j\le r}S_{i,j}=n_i, S_{i,i}\in 2\Z\text{ for $1\le i\le r$}}&\text{ if $G=\GL_{2n}(\R)$,}\\
\Set{\text{permutation matrices $S\in M_n(\Z_{\ge 0})$ with $S^2=1$}}&\text{ if $G=\GL_{n}(\H)$.}
\end{cases}
\end{align*}
Given $S\in\fI$, we set $g_S\in K$ as follows.

When $G=\GL_{2n}(\R)$, let $I_{i,j}$ ($1\le i,j\le r$) be the $S_{i,j}$-tuples defined by 
\[
(1,2,\ldots,2n)=\prod_{i=1}^r\prod_{j=1}^rI_{i,j}=I_{1,1}\cdot I_{1,2}\cdot I_{1,3}\cdots I_{r,r},
\]
and $I^+_{i,j}, I^-_{i,j}$ the tuples defined by 
\[
I_{i,j}=I^+_{i,j}\cdot I^-_{i,j},\quad I^-_{i,j}=()\text{ if $i<j$},\quad I^+_{i,j}=()\text{ if $i>j$},\text{ and $I^+_{i,i}, I^-_{i,i}$ are $S_{i,i}/2$-tuples,}
\]
where $()$ denotes the $0$-tuple.
For example, $I_{1,1}=(1,2,\ldots,S_{1,1})$, $I_{1,2}=(S_{1,1}+1,S_{1,1}+2,\ldots,S_{1,1}+S_{1,2})$ and $I^+_{1,1}=(1,2,\ldots,S_{1,1}/2)$.
In this case, we set $g_S\in K$ to be the permutation matrix satisfying
\[
(g_S^{-1}(1),\ldots,g_S^{-1}(2n))=\left(\prod_{i=1}^r\prod_{j=i}^rI^+_{i,j}\right)\cdot\left(\prod_{j=1}^r\prod_{i=1}^jI^-_{r+1-i,r+1-j}\right).
\]

When $G=\GL_n(\H)$, let $V_{i,j}$ be the right $\H$-vector space spanned by 
\[
\Set{e_a\in\H^n|\sum_{k=1}^{i-1}\sum_{l=1}^rS_{k,l}+\sum_{l=1}^{j-1}S_{i,l}< a\le \sum_{k=1}^{i-1}\sum_{l=1}^rS_{k,l}+\sum_{l=1}^{j}S_{i,l}},
\]
where $\{e_1,\ldots,e_n\}$ denotes the canonical basis of the right $\H$-vector space $\H^n$. In this case, we set $g_S\in K$ by
\[
g_S=1\text{ on $V_{i,i}$ ($1\le i\le r$)},\quad g_S=\begin{pmatrix}1&1\\\bfj&-\bfj\end{pmatrix}\text{ on $V_{i,j}+V_{j,i}$ ($i\neq j$)}.
\]

Let $P=LN$ be the standard parabolic subgroup of $G$ consisting of upper triangular block matrices corresponding to the partition $(n_1,\ldots,n_r)$ and its Levi decomposition.
\begin{lemma}[{\cite[Propositions 2.1 and 3.1]{Cho19}}]\label{lem:orbit}
The map
$\fI\to H\backslash G/P;S\mapsto Hg_SP$ is bijective.
\end{lemma}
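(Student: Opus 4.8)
The plan is to identify $H\backslash G/P$ with the set of $H$-orbits on the partial flag manifold $\mathcal{X}=G/P$, to attach to each orbit a combinatorial invariant lying in $\fI$, and then to show that this invariant is complete and that $g_S$ realizes the value $S$. I spell out the case $G=\GL_{2n}(\R)$; the case $G=\GL_n(\H)$ is entirely parallel.

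A point of $\mathcal{X}$ is a flag $\mathcal{F}\colon 0=V_{0}\subset V_{1}\subset\cdots\subset V_{r}=\R^{2n}$ with $\dim_{\R}(V_{i}/V_{i-1})=n_{i}$, and $H=\GL_{n}(\C)$ is the centralizer of the complex structure $J$ on $\R^{2n}$ defining $\sigma$. I would attach to $\mathcal{F}$ the relative-position matrix of the ordered pair of flags $(\mathcal{F},J\mathcal{F})$, that is,
\[
S(\mathcal{F})_{i,j}:=\dim(V_{i}\cap JV_{j})-\dim(V_{i-1}\cap JV_{j})-\dim(V_{i}\cap JV_{j-1})+\dim(V_{i-1}\cap JV_{j-1}).
\]
Its row sums telescope to $\sum_{j}S(\mathcal{F})_{i,j}=\dim(V_{i}/V_{i-1})=n_{i}$. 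The symmetry ${}^{t}S(\mathcal{F})=S(\mathcal{F})$ holds because the relative position of $(J\mathcal{F},\mathcal{F})$ is the transpose of that of $(\mathcal{F},J\mathcal{F})$, while applying the $\R$-linear automorphism $J$ and using $J^{2}=-\id$ (so $-V=V$) identifies the relative position of $(J\mathcal{F},\mathcal{F})$ with that of $(\mathcal{F},J\mathcal{F})$. Finally $S(\mathcal{F})_{i,i}$ is even: the spaces $V_{a}\cap JV_{a}$ are $J$-stable, hence even-dimensional, and $\dim(V_{i-1}\cap JV_{i})=\dim(V_{i}\cap JV_{i-1})$ because $J$ interchanges these two spaces, so the two cross terms contribute an even amount. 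Thus $S(\mathcal{F})\in\fI$, and $S(\cdot)$ is constant on $H$-orbits since every $h\in H$ commutes with $J$. In the quaternionic case $P$ is a Borel subgroup, $\mathcal{X}$ is the variety of complete flags of right $\H$-submodules of $\H^{n}$, and $J$ is replaced by left multiplication by $\bfi$; then $S(\mathcal{F})$ is a permutation matrix with $S(\mathcal{F})^{2}=1$ by the same $\bfi^{2}=-1$ argument, so again $S(\mathcal{F})\in\fI$.

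It remains to show that the induced map $H\backslash\mathcal{X}\to\fI$ is a bijection with inverse $S\mapsto[\,H\text{-orbit of }g_{S}\mathcal{F}_{\mathrm{std}}\,]$, where $\mathcal{F}_{\mathrm{std}}$ is the flag fixed by $P$. For the identity $S(g_{S}\mathcal{F}_{\mathrm{std}})=S$ one computes directly from the explicit description of $g_{S}$ as a permutation matrix (resp.\ block matrix over $\H$): the tuples $I^{\pm}_{i,j}$ (resp.\ the subspaces $V_{i,j}$) are arranged precisely so that the graded piece of $V_{i}\cap JV_{j}$ (resp.\ $V_{i}\cap\bfi V_{j}$) has the prescribed dimension $S_{i,j}$. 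For injectivity I would argue by induction on $r$ via a Witt-type normal form. Given $\mathcal{F}$ with $S(\mathcal{F})=S$, one first brings $V_{1}$ into standard position: writing $V_{1}=(V_{1}\cap JV_{1})\oplus M$ with $M$ a totally real subspace of a $J$-stable complement of $V_{1}\cap JV_{1}$ inside $V_{1}+JV_{1}$, transitivity of $\GL_{n}(\C)$ on such configurations of fixed dimensions shows that the $H$-orbit of $V_{1}$ depends only on $\dim(V_{1}\cap JV_{1})=S_{1,1}$. The $J$-stable subspace $V_{1}+JV_{1}$ then lets one pass to the smaller symmetric space $\R^{2n}/(V_{1}+JV_{1})$ and, dually, to $V_{1}+JV_{1}$, so the remaining flag data splits into lower-rank problems indexed by the off-diagonal entries of $S$, to which the inductive hypothesis applies; carrying out this bookkeeping reproduces exactly the blocks defining $g_{S}$.

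The main obstacle is the completeness of the invariant — the normal-form step — since one must simultaneously standardize a flag and the complex (resp.\ quaternionic) structure, and the combinatorics required to organize the induction (tracking the $J$-stable part, the $J$-paired part, and their interaction with the flag filtration) is exactly what $\fI$ and the matrices $g_{S}$ encode. Once the normal form is in hand, the realization $S(g_{S}\mathcal{F}_{\mathrm{std}})=S$ is a finite computation, surjectivity follows, and injectivity yields the asserted bijection; alternatively, one may simply invoke \cite[Propositions 2.1 and 3.1]{Cho19}.
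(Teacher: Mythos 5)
The paper does not actually prove this lemma: it is quoted directly from \cite[Propositions 2.1 and 3.1]{Cho19}, so your closing remark that one may simply invoke Chommaux coincides with the paper's treatment, and for the purposes of this paper that citation is all that is required.

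Your attempted self-contained argument follows the natural (and essentially the cited) strategy: identify $H\backslash G/P$ with the $H$-orbits on the flag manifold, attach to a flag $\mathcal{F}$ the relative-position matrix of the pair $(\mathcal{F},J\mathcal{F})$, and show this invariant lies in $\fI$, is complete, and is realized by $g_S$. The easy half is done correctly: the symmetry via $J^{2}=-\id$, the evenness of the diagonal entries, the telescoping row sums, and the $H$-invariance (you should also record that $S(\mathcal{F})_{i,j}\ge 0$, the standard modular-law computation, to conclude membership in $\fI$). But the two steps carrying all the content are only asserted. The identity $S(g_S\mathcal{F}_{\mathrm{std}})=S$ is called a finite computation and never performed; more seriously, the injectivity argument has a genuine gap as written. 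After normalizing $V_1$ and splitting $\R^{2n}=(V_1+JV_1)\oplus W$ with $W$ a $J$-stable complement, the later terms $V_2,\dots,V_r$ of the flag need not be adapted to this decomposition, and the subgroup of $H$ preserving $V_1$ does not act independently on $V_1+JV_1$ and on the quotient while also constraining the remaining flag; the claim that the remaining data ``splits into lower-rank problems indexed by the off-diagonal entries of $S$'' is precisely the simultaneous normal form for the whole flag against $J$ that one must construct, i.e.\ it is the statement to be proved rather than a reduction of it. The same issue occurs in the quaternionic case with $\bfi$ in place of $J$. So as a self-contained proof the proposal is an outline with the decisive step missing; as a citation-based proof it is exactly what the paper does.
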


Set $\sigma_S:=\Psi(g_S)^{-1}\circ\sigma\circ\Psi(g_S)$ for $S\in\fI$.
Now the involution $\sigma_S$ on $G$ preserves $L$ if and only if $S$ is a monomial matrix, that is, an invertible matrix of the form $S_1S_2$ where $S_1$ (resp. $S_2$) denotes a diagonal matrix (resp. a permutation matrix).
By considering the permutation associated to $S$, 
we may identify the set of such matrices with
\begin{align*}
\fS:=
\begin{cases}
\Set{\varsigma\in\fS_r|\varsigma^2=1,\quad n_i=n_{\varsigma(i)}\text{ for $1\le i\le r$,}\quad n_i=2\text{ if $\varsigma(i)=i$}}
&\text{ if $G=\GL_{2n}(\R)$,}\\
\Set{\varsigma\in\fS_n|\varsigma^2=1}&\text{ if $G=\GL_n(\H)$.}
\end{cases}
\end{align*}
For $\varsigma\in\fS$, let us write $\sigma_{\varsigma}$ (resp.\,$g_{\varsigma}$) for the involution $\sigma_{S}$ preserving $L$ (resp.\,$g_S$), where $S$ is the element in $\fI$ corresponding to $\varsigma$.
\begin{lemma}[{\cite[pages 206 and 214]{Cho19}}]\label{lem:orbit2}
Let $\varsigma\in\fS$.
Under the block-wise decomposition $L=\prod_{i=1}^rL_i$ ($L_i=\GL_{n_i}(\R)$ if $G=\GL_{2n}(\R)$ and $L_i=\GL_{n_i}(\H)$ if $G=\GL_n(\H)$), the isomorphism $\upsilon_i=\sigma_{\varsigma}|_{L_i}$ from $L_i$ onto $L_{\varsigma(i)}$ is written as
\[\upsilon_i=
\begin{cases}
\id&\text{ if $G=\GL_{2n}(\R)$ and $n_i=1$},\\
\Psi\left(\begin{pmatrix}0&-1\\1&0\end{pmatrix}\right)&\text{ if $G=\GL_{2n}(\R), n_i=2$ and $\varsigma(i)=i$},\\
\Psi(J_2)&\text{ if $G=\GL_{2n}(\R), n_i=2$ and $\varsigma(i)\neq i$},\\
\Psi(\bfi)&\text{ if $G=\GL_n(\H)$}.
\end{cases}
\]
\end{lemma}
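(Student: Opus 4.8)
The plan is to compute $\sigma_\varsigma|_L$ by hand. Write $w$ for the matrix with $\sigma=\Psi(w)$ (so $w=\begin{pmatrix}0&-J_n\\J_n&0\end{pmatrix}$ when $G=\GL_{2n}(\R)$ and $w=\bfi I_n$ when $G=\GL_n(\H)$), so that $\sigma_\varsigma=\Psi(w_\varsigma)$ with $w_\varsigma:=g_\varsigma^{-1}wg_\varsigma$. The first step is to note that, since $\sigma_\varsigma$ preserves $L=\prod_{i=1}^rL_i$ and induces the permutation $\varsigma$ on the factors, $w_\varsigma$ normalizes $L$; by the description of $N_G(L)$ this forces $w_\varsigma$ to be \emph{block-monomial} with respect to the partition $(n_1,\ldots,n_r)$, with exactly one nonzero block $A_i\colon V_i\xrightarrow{\sim}V_{\varsigma(i)}$ in each block-row and block-column (here $V_i\subset D^N$ is the $i$-th coordinate block). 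A one-line bookkeeping with block matrices then identifies $\upsilon_i=\sigma_\varsigma|_{L_i}$ with the map $g_i\mapsto A_ig_iA_i^{-1}$: when $\varsigma(i)=i$ this is $\Psi(A_i)\in\Inn(L_i)$, and when $\varsigma(i)\neq i$ it is the isomorphism $\GL_{n_i}(D)\cong\GL_{n_i}(D)$ induced by $A_i$ after $V_i$ and $V_{\varsigma(i)}$ are identified with $D^{n_i}$ in the standard way. So everything reduces to computing the blocks $A_i=(g_\varsigma^{-1}wg_\varsigma)_{\varsigma(i),i}$.

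For $G=\GL_n(\H)$ this is immediate from the explicit $g_S$ recalled before Lemma \ref{lem:orbit}: on a block $V_{i,i}$ with $\varsigma(i)=i$ one has $g_\varsigma=1$, whence $A_i=\bfi$ and $\upsilon_i=\Psi(\bfi)$; on a pair $V_{i,j}\oplus V_{j,i}$ with $j=\varsigma(i)\neq i$ one has $g_\varsigma=\begin{pmatrix}1&1\\\bfj&-\bfj\end{pmatrix}$, and the $2\times 2$ computation of $\begin{pmatrix}1&1\\\bfj&-\bfj\end{pmatrix}^{-1}(\bfi I_2)\begin{pmatrix}1&1\\\bfj&-\bfj\end{pmatrix}$ exhibits $A_i$ as $\Psi(\bfi)$ after the evident identification. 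For $G=\GL_{2n}(\R)$, the matrix $w$ pairs coordinate $a$ ($1\le a\le n$) with coordinate $2n+1-a$, with a sign depending on $\pm J_n$; I would unwind Cho's reordering $g_\varsigma$, given by the tuple product $(\prod_i\prod_{j\ge i}I^+_{i,j})\cdot(\prod_j\prod_{i\le j}I^-_{r+1-i,r+1-j})$ for the monomial $S$ attached to $\varsigma$, to see which reordered coordinates lie in each $V_i$. For a fixed block ($\varsigma(i)=i$, so $n_i=2$ and $I^+_{i,i},I^-_{i,i}$ are singletons) the two coordinates of $V_i$ form a $w$-paired pair and the signs force $A_i=\pm\begin{pmatrix}0&-1\\1&0\end{pmatrix}$, hence $\upsilon_i=\Psi\bigl(\begin{pmatrix}0&-1\\1&0\end{pmatrix}\bigr)$; for a swapped pair with $n_i=2$ the block $A_i\colon V_i\to V_{\varsigma(i)}$ comes out to $\pm J_2$, hence $\upsilon_i=\Psi(J_2)$; and for $n_i=1$ the block $A_i$ is a nonzero scalar, so $\upsilon_i=\id$ since conjugation on $\GL_1(\R)$ is trivial.

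The main obstacle is purely combinatorial: tracking $g_\varsigma$ precisely enough through Cho's tuple notation $I^\pm_{i,j}$ to pin down which reordered coordinate lands in which $V_i$, together with the sign bookkeeping from $-J_n$ versus $J_n$, so that the \emph{exact} matrices $\begin{pmatrix}0&-1\\1&0\end{pmatrix}$, $J_2$, $\bfi$ appear and not merely $L_i$-conjugates of them. I note that for the applications in Section \ref{sec:5-4} only the class of $\sigma_\varsigma|_L$ modulo $\Inn(L)$ and the swapping pattern $\varsigma$ matter (distinction of $\gamma$ and the restriction of ${}^{g_\varsigma}\chi_H$ to $L^{\sigma_\varsigma}$ are insensitive to $L$-conjugation), so one could instead argue more cheaply: $\upsilon_i$ is forced to be an (iso)morphism of $\GL_{n_i}(D)$ whose fixed-point type is dictated by $H\cong\GL_n(\C)$, and the classification of inner involutions of $\GL_m(\R)$ and $\GL_m(\H)$ pins $\upsilon_i$ down up to this harmless ambiguity. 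Either way, once the reordering is fixed each block identity is a one-line $1\times 1$ or $2\times 2$ matrix check.
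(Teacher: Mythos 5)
Your proposal is correct, and it is essentially the only available argument: the paper itself gives no proof of this lemma but simply quotes it from \cite{Cho19} (pages 206 and 214), where the same direct computation with the explicit representatives $g_S$ is carried out. The block-monomial reduction and your $\GL_n(\H)$ computation are right, and the remaining real-case bookkeeping you flag does close as you predict (for monomial $S$ the representative $g_\varsigma$ is a very simple permutation, the blocks of $g_\varsigma^{-1}\begin{pmatrix}0&-J_n\\J_n&0\end{pmatrix}g_\varsigma$ come out to $\pm\begin{pmatrix}0&-1\\1&0\end{pmatrix}$, $\pm J_2$, or a nonzero scalar, and the sign is killed by $\Psi$), so there is no gap beyond finishing that routine check.
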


For $1\le i\le r$ with $i\le\varsigma(i)$, define an injective homomorphism 
\begin{align*}
\Upsilon_i\colon 
\begin{cases}
L_i^{\sigma_{\varsigma}}\to L^{\sigma_{\varsigma}};x\mapsto
\diag(I_{n_1},\ldots,I_{n_{i-1}},x,I_{n_{i+1}},\ldots,I_{n_r})&\text{ if $i=\varsigma(i)$,}\\
L_i\to L^{\sigma_{\varsigma}};x\mapsto
\diag(I_{n_1},\ldots,I_{n_{i-1}},x,1,I_{n_{i+1}},\ldots,I_{n_{\varsigma(i)-1}},\upsilon(x),I_{n_{\varsigma(i)+1}},\ldots,I_{n_r})&\text{ if $i<\varsigma(i)$.}
\end{cases}
\end{align*}
A simple computation gives the following 
\begin{lemma}\label{lem:chi}
The composite of $\det_{\GL_n(\C)}\circ \Psi(g_{\varsigma})\colon L^{\sigma_{\varsigma}}\to L^{\sigma}\to\C^{\times}$ and $\Upsilon_i$ equals $\det_{L_i^{\sigma_{\varsigma}}}$ if $i=\varsigma(i)$, $\det_{L_i}$ if $i<\varsigma(i)$.
\end{lemma}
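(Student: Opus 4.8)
The plan is a direct computation, organized so that essentially all the work is one small matrix conjugation per index $i$. Since $\det_{\GL_n(\C)}$ and $\Psi(g_{\varsigma})$ are homomorphisms and, by Lemma~\ref{lem:orbit2}, the map $L^{\sigma_{\varsigma}}\cong\prod_{i=\varsigma(i)}L_i^{\sigma_{\varsigma}}\times\prod_{i<\varsigma(i)}L_i$, $(x_i)\mapsto\prod_i\Upsilon_i(x_i)$, is an isomorphism whose factors pairwise commute, it suffices to compute $\det_{\GL_n(\C)}\bigl(g_{\varsigma}\Upsilon_i(x)g_{\varsigma}^{-1}\bigr)$ for each fixed $i$ with $i\le\varsigma(i)$. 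By construction $\Upsilon_i(x)$ is trivial outside the blocks indexed by $i$ and $\varsigma(i)$, and in the description of $g_{\varsigma}$ recalled from \cite{Cho19} the matrix $g_{\varsigma}$ preserves the subspace $V_{i,i}$ (if $i=\varsigma(i)$), resp.\ $V_{i,\varsigma(i)}\oplus V_{\varsigma(i),i}$ (if $i<\varsigma(i)$), which is exactly the span of those blocks; it acts there by the identity, resp.\ by $\left(\begin{smallmatrix}1&1\\ \bfj&-\bfj\end{smallmatrix}\right)$ when $D=\H$ and by the elementary permutation attached to the tuples $I^{\pm}_{i,j}$ when $D=\R$. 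Hence the whole computation takes place inside a single $2$-dimensional (over $\H$) or $2n_i$-dimensional (over $\R$) sub-block, on which $\Upsilon_i(x)$ acts by $\diag(x,\upsilon_i(x))$ with $\upsilon_i$ as in Lemma~\ref{lem:orbit2}.

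I would then carry out this conjugation explicitly. For $D=\H$ and $i<\varsigma(i)$, writing $x=z_1+z_2\bfj$ with $z_1,z_2\in\C=\R+\R\bfi$, so that $\upsilon_i(x)=z_1-z_2\bfj$, a short calculation with $g_{\varsigma}=\left(\begin{smallmatrix}1&1\\ \bfj&-\bfj\end{smallmatrix}\right)$ and $g_{\varsigma}^{-1}=\tfrac12\left(\begin{smallmatrix}1&-\bfj\\ 1&\bfj\end{smallmatrix}\right)$ gives
\[
g_{\varsigma}\,\diag(x,\upsilon_i(x))\,g_{\varsigma}^{-1}=\begin{pmatrix}z_1&z_2\\ -\bar z_2&\bar z_1\end{pmatrix}\in\GL_2(\C),
\]
i.e.\ the image of $x$ under a standard embedding $\H^{\times}\hookrightarrow\GL_2(\C)$, whose complex determinant is $|z_1|^2+|z_2|^2$, that is, the reduced norm $\det_{L_i}(x)$; the case $i=\varsigma(i)$, where $g_{\varsigma}=1$ on the relevant block and $x$ already lies in $\GL_1(\C)$, is immediate. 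For $D=\R$ the same scheme applies, the only change being that $g_{\varsigma}$ is now a permutation matrix; here the bookkeeping can be shortened by noting that both sides of the asserted identity are continuous characters of $L_i^{\sigma_{\varsigma}}$ (resp.\ of $L_i$, whose abelianization is $\R^{\times}$ via $\det_{L_i}$), so it is enough to check the identity on scalar matrices — on which conjugation by $g_{\varsigma}$ is trivial and both sides obviously agree — together with one further element, which is a single $2\times2$ computation (a reflection when $L_i=\GL_2(\R)$ with $i<\varsigma(i)$, and the element $\exp(\theta J)$ with $J=\left(\begin{smallmatrix}0&-1\\ 1&0\end{smallmatrix}\right)$ when $i=\varsigma(i)$, to pin down the isomorphism $L_i^{\sigma_{\varsigma}}\cong\C^{\times}$).

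I expect the only real friction to be the consistency of the several identifications rather than anything conceptual. The isomorphism $H=G^{\sigma}\cong\GL_n(\C)$ is induced by the specific real-linear embedding $\C^n\hookrightarrow\R^{2n}$, $(z_1,\dots,z_n)\mapsto(\re z_1,\dots,\re z_n,\im z_n,\dots,\im z_1)$ (resp.\ $\C\hookrightarrow\H$, $a+b\sqrt{-1}\mapsto a+b\bfi$), and one must verify that the complex structure read off the matrix $g_{\varsigma}\Upsilon_i(x)g_{\varsigma}^{-1}$ is this one and not its conjugate — equivalently, that one lands on $\det_{\GL_n(\C)}$ and not on $\overline{\det_{\GL_n(\C)}}$ — and, in the case $D=\R$ with $i=\varsigma(i)$, that the isomorphism $L_i^{\sigma_{\varsigma}}\xrightarrow{\sim}\C^{\times}$ one is forced into is exactly $\det_{L_i^{\sigma_{\varsigma}}}$ as normalized in the paper. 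This is precisely what the explicit form of $g_{\varsigma}$ from \cite{Cho19} is there to settle, so no genuine obstacle arises; it is a matter of carrying out the bookkeeping carefully.
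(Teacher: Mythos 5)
Your proposal is correct and is exactly the ``simple computation'' the paper itself invokes without writing out: a direct block-by-block conjugation by the explicit representatives $g_{\varsigma}$ of \cite{Cho19}, with the quaternionic $2\times2$ case done in full and the real case reduced to scalars plus one extra element (the only delicate point, which you rightly flag, being consistency with the fixed identifications $H\cong\GL_n(\C)$ and $L_i^{\sigma_{\varsigma}}\cong\C^{\times}$). Since the paper supplies no further detail for this lemma, there is nothing to contrast; your argument matches its intended proof.
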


We set
\[
\fT:=
\begin{cases}
\Set{\varsigma\in\fS_{r}|
\begin{array}{ll}
\varsigma^2=1,n_i=n_{\varsigma(i)}\text{ for $1\le i\le r$, and}\\
\text{$n_i=2$ and $\pi_i$ is $\chi_{\GL_1(\C)}$-distinguished}&\text{ if $\varsigma(i)=i$,}\\
\text{$\pi_i\hat{\otimes}\pi_{\varsigma(i)}$ is $\chi_{\GL_{n_i}(\R)}$-distinguished}&\text{ if $\varsigma(i)\neq i$}
\end{array}
}
&\text{ if $G=\GL_{2n}(\R)$},
\\
\Set{\varsigma\in\fS_n|
\begin{array}{ll}
\varsigma^2=1,\\
\text{$\pi_i$ is $\chi_{\GL_1(\C)}$-distinguished}&\text{ if $\varsigma(i)=i$,}\\
\pi_i\otimes\pi_{\varsigma(i)}\text{ is $\chi_{\GL_1(\H)}$-distinguished}&\text{ if $\varsigma(i)\neq i$}
\end{array}
}
&\text{ if $G=\GL_n(\H)$}.
\end{cases}
\]

\begin{remark}\label{rem:parameter}
Let us write the character $\chi$ of $\C^{\times}$ as 
$\chi(z)=(z/|z|)^{l}|z|^{\eta}$ with $l\in\Z$, $\eta\in\C$.
For any irreducible SAF representations $\pi_1, \pi_2$ of a general linear group $G'$ over $\R$ or $\H$, the representation $\pi_1\hat{\otimes}\pi_2$ is $\chi_{G'}$-distinguished if and only if $\pi_1\cong\pi_2^{\vee}\cdot\chi_{G'}$ by Lemma \ref{lem:contragredient}, 
which is also equivalent to
\[
\lambda_1+\lambda_2=\eta,
\begin{cases}
k_1+k_2\in l+2\Z&\text{ when $(G',\pi_i)=(\GL_1(\R),\pi^{(1)}_{k_i,\lambda_i})$,}\\
k_1=k_2&\text{ when $(G',\pi_i)=(\GL_2(\R),\pi^{(2)}_{k_i,\lambda_i}),(\GL_1(\H),\tau_{k_i,\lambda_i})$}
\end{cases}
\]
where $(k_i, \lambda_i)$ belongs to $\{0,1\}\times\C$ in the former case, and to $\Z_{\ge 1}\times\C$ in the latter case for $i=1,2$.
Moreover, the representation $\pi=\pi^{(2)}_{k,\lambda}$ (resp.\,$\tau_{k,\lambda}$) of $G'=\GL_2(\R)$ (resp.\,$\GL_1(\H)$) with $k\in\Z_{\ge 1},\lambda\in\C$ is $\chi_{\GL_1(\C)}$-distinguished if and only if
\[
2\lambda=\eta,\quad
\begin{cases}
k+1\in |l|-2\Z_{\ge 0}&\text{ when $G'=\GL_2(\R)$,}\\
k-1\in |l|+2\Z_{\ge 0}&\text{ when $G'=\GL_1(\H)$}
\end{cases}
\]
as we saw in Section \ref{sec:2-2}. In this case, $\pi\cong\pi^{\vee}\cdot\chi_{G'}$ also holds.
\end{remark}

Theorem \ref{thm:key-intro} follows from the following
\begin{theorem}\label{thm:key}
We have $\dim \Hom_H(\pi_1\times\cdots\times\pi_r,\chi_H)\le \#\fT.$
In particular, if $\pi_1\times\cdots\times\pi_r$ is $\chi_H$-distinguished, then $\fT$ is not empty.
\end{theorem}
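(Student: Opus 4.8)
The plan is to derive Theorem \ref{thm:key} from Theorem \ref{thm:bound}, applied to the quintuple $(G,H,P,\gamma,\chi_H)$ in which $P=LN$ is the standard parabolic subgroup of $G$ of upper-triangular block type attached to $(n_1,\dots,n_r)$, with Levi $L=\prod_{i=1}^rL_i$, and $\gamma=\pi_1\boxtimes\cdots\boxtimes\pi_r$. The first step is to check that this quintuple satisfies conditions \eqref{cond:rho}--\eqref{cond:char}. Condition \eqref{cond:rho}: the derived algebra of $\fg$ is $\mathfrak{sl}_{2n}(\R)$ (resp. $\mathfrak{sl}_n(\H)$) and one computes $\fh\cap[\fg,\fg]=\mathfrak{sl}_n(\C)+\sqrt{-1}\R$, so the only simple component of $(\fg,\fh)$ is $(\mathfrak{sl}_{2n}(\R),\mathfrak{sl}_n(\C)+\sqrt{-1}\R)$ (resp. $(\mathfrak{sl}_n(\H),\mathfrak{sl}_n(\C)+\sqrt{-1}\R)$), which appears in the list \eqref{cond:rho}. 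Condition \eqref{cond:positive}: each of $\GL_1(\R),\GL_2(\R)$ (for $D=\R$) and $\GL_1(\H)$ (for $D=\H$) carries essentially square integrable representations, so $P$ is cuspidal; and writing $\gamma=\tau_\lambda$ with $\tau$ the tempered part and $\lambda\in\fa_{\C}^{\vee}$ the tuple of exponents of the $\pi_i$, the standard-module hypothesis -- namely the dominance condition \eqref{eq:irrepR} (resp. \eqref{eq:irrepH}) on those exponents -- is exactly the statement that $\re\lambda(X_\alpha)\ge 0$ for all $\alpha\in\Sigma^+$ (the within-block roots give $0$, and the cross-block roots reproduce the inequalities among the $\pi_i$). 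Condition \eqref{cond:gamma}: every noncompact simple ideal of $\fl$ arises from a $\GL_2(\R)$-block, hence is $\cong\mathfrak{sl}_2(\R)$ (when $D=\H$ there are none, cf. Remark \ref{rem:cuspidal}). Condition \eqref{cond:char} is clear. Theorem \ref{thm:bound} therefore yields
\[
\dim\Hom_H(\pi_1\times\cdots\times\pi_r,\chi_H)\ \le\ \sum_{j\in\cJ}\dim\Hom_{L^{\sigma_j}}(\gamma,{}^{g_j}\chi_H).
\]

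The second step is to identify this bound with $\#\fT$ using Section \ref{sec:5-4}. By Lemma \ref{lem:orbit} I would take the double-coset representatives to be Cho's $\{g_S\}_{S\in\fI}$ (which one checks satisfy the normalization of Section \ref{sec:5-2} that $\sigma_j$ preserves $\fs$), and since $\sigma_S$ preserves $\fl$ precisely when $S$ is monomial, the index set $\cJ$ corresponds to $\fS$. Fix $\varsigma\in\fS$. Lemma \ref{lem:orbit2} shows that $\sigma_\varsigma$ carries the block $L_i$ onto $L_{\varsigma(i)}$ by an automorphism $\upsilon_i$ which, for $\varsigma(i)=i$, realizes $L_i^{\upsilon_i}\cong\GL_1(\C)$, and, for $\varsigma(i)\ne i$, is inner in $L_i$; hence $L^{\sigma_\varsigma}\cong\prod_{\varsigma(i)=i}L_i^{\upsilon_i}\times\prod_{i<\varsigma(i)}L_i$, and -- since an inner automorphism preserves isomorphism classes -- the restriction of $\gamma$ to $L^{\sigma_\varsigma}$ is, factorwise, $\pi_i$ on $L_i^{\upsilon_i}\cong\GL_1(\C)$ when $\varsigma(i)=i$ and $\pi_i\hat{\otimes}\pi_{\varsigma(i)}$ on the $i$-th copy of $L_i$ when $i<\varsigma(i)$. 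By Lemma \ref{lem:chi} the transported character ${}^{g_\varsigma}\chi_H$ is, correspondingly, $\chi_{\GL_1(\C)}$ on each $\GL_1(\C)$-factor and $\chi_{\GL_{n_i}(D)}$ on each $L_i$-factor. Consequently
\[
\dim\Hom_{L^{\sigma_\varsigma}}(\gamma,{}^{g_\varsigma}\chi_H)=\prod_{\varsigma(i)=i}\dim\Hom_{\GL_1(\C)}(\pi_i,\chi_{\GL_1(\C)})\cdot\prod_{i<\varsigma(i)}\dim\Hom_{L_i}(\pi_i\hat{\otimes}\pi_{\varsigma(i)},\chi_{\GL_{n_i}(D)}).
\]
Each factor on the right is $0$ or $1$: the first kind by the $\SO(2)$-type computation of Section \ref{sec:2-2} (summarized in Remark \ref{rem:parameter}), the second by Lemma \ref{lem:contragredient}. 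Hence the product is $1$ exactly when $\pi_i$ is $\chi_{\GL_1(\C)}$-distinguished for every fixed point $i$ and $\pi_i\hat{\otimes}\pi_{\varsigma(i)}$ is $\chi_{\GL_{n_i}(D)}$-distinguished for every two-cycle $\{i,\varsigma(i)\}$ -- that is, exactly when $\varsigma\in\fT$ -- and is $0$ otherwise. Summing over $\varsigma\in\fS$ gives $\#\fT$, which is the asserted inequality. The ``in particular'' follows at once, since $\chi_H$-distinction of $\pi_1\times\cdots\times\pi_r$ makes the left-hand side positive.

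The substantive difficulty is not here but in Theorem \ref{thm:bound} and the homological material of Sections \ref{sec:3}--\ref{sec:5-3}; the present argument is essentially bookkeeping. Within it, the step deserving the most care is the verification of condition \eqref{cond:positive} -- i.e. seeing that the standard-module ordering of Lemmas \ref{lem:irrepR} and \ref{lem:irrepH} translates exactly, and with the right normalization of the exponents relative to the $X_\alpha$, into $\re\lambda(X_\alpha)\ge 0$ -- together with the compatibility of the representatives $\{g_S\}$ with the normalization fixed in Section \ref{sec:5-2} and the computation of ${}^{g_\varsigma}\chi_H|_{L^{\sigma_\varsigma}}$ in Lemma \ref{lem:chi}; all of these are prepared in Sections \ref{sec:2-2} and \ref{sec:5-4}, so no new obstacle appears.
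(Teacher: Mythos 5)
Your proposal is correct and follows essentially the same route as the paper: verify conditions \eqref{cond:rho}--\eqref{cond:char} for $(G,H,P,\pi_1\hat{\otimes}\cdots\hat{\otimes}\pi_r,\chi_H)$, invoke Theorem \ref{thm:bound} together with Cho's representatives (Lemmas \ref{lem:orbit}, \ref{lem:orbit2}, \ref{lem:chi}) to reduce to the blocks, and conclude by the multiplicity-at-most-one statements (the $\SO(2)$-type computation of Section \ref{sec:2-2} and Lemma \ref{lem:contragredient}). The one caution is that the displayed factorization of $\Hom_{L^{\sigma_\varsigma}}(\gamma,{}^{g_\varsigma}\chi_H)$ over the blocks, which you introduce with ``consequently,'' is exactly what the second half of the paper's proof is devoted to justifying --- it passes to $(K\cap L^{\sigma_\varsigma})$-finite vectors using admissibility of the $\pi_i$, uses nuclearity as in \eqref{eq:example}, and returns to $\Hom$-spaces via Proposition \ref{prop:finite} and Lemmas \ref{lem:comparison}, \ref{lem:Hausdorff} --- so it is not mere bookkeeping, although the one-dimensionality inputs you cite are indeed the right ingredients to make it (or at least the inequality $\le$, which is all the theorem needs) rigorous.
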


\begin{proof}
Recall that $(G,H,P,\pi_1\hat{\otimes}\cdots\hat{\otimes}\pi_r,\chi_H)$ satisfies the conditions \eqref{cond:rho}--\eqref{cond:char}.
Therefore, by Theorem \ref{thm:bound} and Lemma \ref{lem:orbit}, we have
\begin{align}\label{eq:key}
&\dim\Hom_H(\pi_1\times\cdots\times\pi_r,\chi_H)\le\sum_{\varsigma\in\fS}\dim H_0(\fl^{\sigma_{\varsigma}}_{\C},K\cap L^{\sigma_{\varsigma}};(\pi_1\hat{\otimes}\cdots\hat{\otimes}\pi_r)_{\text{$(K\cap L^{\sigma_{\varsigma}})$-fin}}\cdot{}^{g_{\varsigma}}\chi_H^{-1}).
\end{align}
Write $I_{\varsigma}=\Set{1\le i\le r|i<\varsigma(i)}$.
Since $\pi_i$ is $(K\cap L_i)$-admissible for any $i\in[1,r]^{\varsigma}$, we have
\[
(\pi_1\hat{\otimes}\cdots\hat{\otimes}\pi_r)_{\text{$(K\cap L^{\sigma_{\varsigma}})$-fin}}\cong\bigotimes_{i\in[1,r]^{\varsigma}}(\pi_i)_{\text{$(K\cap L_i)$-fin}}\otimes\left(\widehat{\bigotimes_{i\in I_{\varsigma}}}\pi_i\hat{\otimes}\pi_{\varsigma(i)}\right)_{\text{$(K\cap L_i)$-fin}}.
\]
From Lemma \ref{lem:orbit2}, we have $\pi_{\varsigma(i)}\circ\upsilon_i\cong\pi_{\varsigma(i)}$ for any $i\in I_{\varsigma}$.
Therefore Lemma \ref{lem:chi} shows that the right hand side of \eqref{eq:key} equals 
\[
\sum_{\varsigma\in\fS}\dim\left(\bigotimes_{i\in[1,r]^{\varsigma}} H_0((\fl_i^{\sigma_{\varsigma}})_{\C},K\cap L_i^{\sigma_{\varsigma}};(\pi_i)_{\text{$(K\cap L_i)$-fin}}\cdot\chi^{-1}_{L_i^{\sigma_{\varsigma}}})\right)\cdot\dim\cA_{\varsigma},
\]
where 
\begin{align*}
\cA_{\varsigma}&:=
H_0\left(\bigoplus_{i\in I_{\varsigma}}(\fl_i)_{\C},K\cap \prod_{i\in I_{\varsigma}}L_i;\left(\widehat{\bigotimes_{i\in I_{\varsigma}}}(\pi_i\hat{\otimes}\pi_{\varsigma(i)})\cdot\chi^{-1}_{L_i}\right)_{\text{$(K\cap L_i)$-fin}}\right)\\
&\cong
H_0\left(\prod_{i\in I_{\varsigma}}L_i,\widehat{\bigotimes_{i\in I_{\varsigma}}}(\pi_i\hat{\otimes}\pi_{\varsigma(i)})\cdot\chi^{-1}_{L_i}\right)\\
&\cong
\bigotimes_{i\in I_{\varsigma}}H_0(L_i,(\pi_i\hat{\otimes}\pi_{\varsigma(i)})\cdot\chi^{-1}_{L_i}).
\end{align*}
Here we used Lemma \ref{lem:comparison} for the first isomorphism,  and Proposition \ref{prop:finite} and similar arguments as \eqref{eq:example} for the second one.
Hence again by Lemmas \ref{lem:comparison} and \ref{lem:Hausdorff} and Proposition \ref{prop:finite}, the right hand side of \eqref{eq:key} equals
\begin{align*}
\sum_{\varsigma\in\fS}\left(\prod_{i\in[1,r]^{\varsigma}}\dim\Hom_{L_i^{\sigma_{\varsigma}}}(\pi_i,\chi_{L_i^{\sigma_{\varsigma}}})\right)\left(\prod_{i\in I_{\varsigma}}\dim\Hom_{L_i}(\pi_i\hat{\otimes}\pi_{\varsigma(i)},\chi_{L_i})\right).
\end{align*}

When $i\in[1,r]^{\varsigma}$, we have $\dim\Hom_{L_i^{\sigma_{\varsigma}}}(\pi_i,\chi_{L_i})\le 1$ since $(\pi_i)_{\text{$(K\cap L_i^{\sigma_{\varsigma}})$-fin}}$ is $(K\cap L_i^{\sigma_{\varsigma}})$-multiplicity free.
When $i\in I_{\varsigma}$, we have $\dim\Hom_{L_i}(\pi_i\hat{\otimes}\pi_{\varsigma(i)},\chi_{L_i})\le 1$ by Lemma \ref{lem:contragredient}.
From the above arguments, the right hand side of \eqref{eq:key} equals $\#\fT$, and the proof is complete.
\end{proof}

\begin{proof}[Proof of Corollary \ref{cor:dual}]
Assume that an irreducible SAF representation $\pi$ of $G$ is $\chi_H$-distinguished. 
Let $\widetilde{\pi}=\pi_1\times\pi_2\times\cdots\times\pi_r$ be a standard module above $\pi$, $(n_1,\ldots,n_r)$ the corresponding partition, $\phi_\pi$ the $L$-parameter of $\pi$, and $\zeta\in\fT\subset\fS_r$ the involution in Theorem~\ref{thm:key-intro}.
Now $\pi_i$ $(1\le i\le r)$ is of the form $\pi^{(1)}_{k_i,\lambda_i}, \pi^{(2)}_{k_i,\lambda_i}$ or $\tau_{k_i,\lambda_i}$ $(k_i\in\Z,\lambda_i\in\C)$. 
Write $\chi(z)=(z/|z|)^l|z|^{\eta}$ with $l\in\Z, \eta\in\C$.

From \cite[Theorem 1.3]{AV16} and the Langlands correspondence for $\GL_N(D)$, we see that the $L$-parameter of the contragredient $\pi^{\vee}$ equals the contragredient $\phi_\pi^{\vee}$ of the $L$-parameter of $\pi$.
By $\zeta\in\fT$ and Remark \ref{rem:parameter}, we have $-\lambda_{\zeta(i)}=\lambda_i-\eta$. 
Since $\pi_1,\pi_2,\ldots,\pi_r$ satisfy \eqref{eq:irrepR} or \eqref{eq:irrepH}, so do $\pi_{\zeta(1)}^{\vee},\pi_{\zeta(2)}^{\vee},\ldots,\pi_{\zeta(r)}^{\vee}$. 
Therefore $\pi_{\zeta(1)}^{\vee}\times\pi_{\zeta(2)}^{\vee}\times\cdots\times\pi_{\zeta(r)}^{\vee}$ is a standard module above $\pi^{\vee}$, and $(\pi_{\zeta(1)}^{\vee}\cdot\chi_{\GL_{n_1}(D)})\times(\pi_{\zeta(1)}^{\vee}\cdot\chi_{\GL_{n_2}(D)})\times\cdots\times(\pi_{\zeta(1)}^{\vee}\cdot\chi_{\GL_{n_r}(D)})$ is a standard module above $\pi^{\vee}\cdot\chi_G$. 
We see $\pi_i\cong \pi_{\zeta(i)}^{\vee}\cdot\chi_{\GL_{n_i}(D)}$ for any $1\le i\le r$ from $\zeta\in\fT$ and Remark \ref{rem:parameter}, and $\pi\cong\pi^{\vee}\cdot\chi_G$ follows.
\end{proof}

\section*{Appendix: Proof of Lemma \ref{lem:inv=coinv}}
Write $S(G)$ for the space of Schwartz functions on $G$, and $I(G)$ for the subspace consisting of $f\in S(G)$ with $\int f(g)dg=0$, where $dg$ denotes a right invariant Haar measure.  
\begin{lemma}[{\cite[Theorem 1.7]{CS21}}]\label{lem:coinv}
If $A\subset G$ intersects all connected components of $G$, then
\[\sum_{g\in G}(g-1)\pi
=I(G)V
=\fg V+\sum_{a\in A}(a-1)V.\]
\end{lemma}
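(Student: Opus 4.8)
The plan is to prove the two equalities in the lemma by establishing, together with the trivial $\sum_{g\in G}(g-1)V\subseteq I(G)V$, the chain of inclusions
\[
\fg V+\sum_{a\in A}(a-1)V\ \subseteq\ I(G)V\ \subseteq\ \sum_{g\in G}(g-1)V\ \subseteq\ \fg V+\sum_{a\in A}(a-1)V ,
\]
which forces all three subspaces to coincide. Throughout I use that an almost linear Nash group $G$ has finitely many connected components, and the Dixmier--Malliavin theorem, which equips $V$ with a non-degenerate module structure over the convolution algebra $S(G)$, so that $V=S(G)\cdot V$. In the standard normalisation of this module structure (with $dg$ right-invariant) one has
\[
g\cdot(\phi\cdot v)=(R_g\phi)\cdot v,\qquad X\cdot(\phi\cdot v)=(\widetilde X\phi)\cdot v ,
\]
where $(R_g\phi)(x)=\phi(xg)$ and $(\widetilde X\phi)(x)=\tfrac{d}{dt}\big|_{t=0}\phi(x\exp tX)$; since $dg$ is right-invariant, both $R_g\phi-\phi$ and $\widetilde X\phi$ have vanishing integral, hence lie in $I(G)$.

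\textbf{Easy inclusions.} Writing $v=\phi\cdot w$, one gets $(g-1)v=(R_g\phi-\phi)\cdot w\in I(G)V$ and $Xv=(\widetilde X\phi)\cdot w\in I(G)V$; by linearity and $V=S(G)\cdot V$ this gives $\sum_{g\in G}(g-1)V\subseteq I(G)V$ and $\fg V+\sum_{a\in A}(a-1)V\subseteq I(G)V$. For the remaining inclusion, the identity $(\exp X-1)u=X\cdot\big(\int_0^1(\exp tX)u\,dt\big)$ together with $g\cdot(\fg V)\subseteq\fg V$ (from $g\cdot(Yw)=(\Ad(g)Y)\cdot(gw)$) gives, telescoping along a word $\exp X_1\cdots\exp X_m$, that $\sum_{g\in G_0}(g-1)V\subseteq\fg V$, where $G_0$ is the identity component. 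Then for arbitrary $g$ write $g=g_0a$ with $g_0\in G_0$ and $a\in A$ in the component of $g$; for $v=\phi\cdot w$,
\[
(g-1)v=(R_{g_0}R_a\phi-R_a\phi)\cdot w+(R_a\phi-\phi)\cdot w=(g_0-1)(R_a\phi\cdot w)+(a-1)(\phi\cdot w)\in\fg V+\sum_{a\in A}(a-1)V .
\]
So $\sum_{g\in G}(g-1)V\subseteq\fg V+\sum_{a\in A}(a-1)V$.

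\textbf{The main inclusion $I(G)V\subseteq\sum_{g\in G}(g-1)V$.} The key input is the connected case: for the identity component,
\[
I(G_0)=\operatorname{span}\{R_h\psi-\psi:h\in G_0,\ \psi\in S(G_0)\}
\]
as vector spaces — equivalently, the Schwartz coinvariants of the right regular representation of $G_0$ on $S(G_0)$ are Hausdorff and one-dimensional. Granting this, fix $f\in I(G)$ and $v\in V$, pick a representative $b_i\in A$ in each component, and fix $\rho\in S(G_0)$ with $\int_{G_0}\rho=1$. Decomposing $f=\sum_i f_i$ according to components and writing $f_i=R_{b_i}\check f_i$ for suitable $b_i\in A$ and $\check f_i\in S(G_0)$ (extended by zero), one gets $f\cdot v=\sum_i b_i\cdot(\check f_i\cdot v)$, and $c_i:=\int_{G_0}\check f_i$ satisfies $\sum_i c_i=\int_G f=0$. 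Since $\check f_i-c_i\rho\in I(G_0)$, the key claim rewrites $(\check f_i-c_i\rho)\cdot v$ as a finite sum of elements $(h-1)(\psi\cdot v)$ with $h\in G_0$, so $(\check f_i-c_i\rho)\cdot v\in\sum_{g\in G_0}(g-1)V$; as $\sum_{g\in G}(g-1)V$ is $G$-stable (because $b\cdot(g-1)w=(bgb^{-1}-1)(bw)$), also $b_i\cdot(\check f_i-c_i\rho)\cdot v$ lies there. Finally $\sum_i c_i\,b_i\cdot(\rho\cdot v)=\sum_i c_i(b_i-1)(\rho\cdot v)\in\sum_{g\in G}(g-1)V$ since $\sum_i c_i=0$. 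Adding up, $f\cdot v\in\sum_{g\in G}(g-1)V$, as required.

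\textbf{Main obstacle.} Every step above is a formal manipulation with the integrated representation; the genuine content is the displayed identity for $I(G_0)$, i.e.\ that a rapidly decreasing smooth function on the connected Lie group $G_0$ with vanishing integral is a \emph{finite} sum of translation-differences $R_h\psi-\psi$. This is precisely where the Dixmier--Malliavin factorisation theorem is needed: one uses $S(G_0)=C_c^\infty(G_0)\ast S(G_0)$ with finitely many terms to reduce a zero-integral Schwartz function to a finite sum of convolutions, and then a telescoping/integration-by-parts argument in exponential coordinates rewrites each such convolution, up to a further zero-integral correction, as a finite combination of $R_h\psi-\psi$. The control on the number of terms at each stage — the passage from mere closures to honest algebraic spans — is the crux, and is exactly what the factorisation theorem buys.
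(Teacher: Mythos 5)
A preliminary remark: the paper does not prove this lemma at all --- it is quoted from Chen--Sun \cite[Theorem 1.7]{CS21} and only used in the Appendix --- so what you are attempting is a proof of Chen--Sun's theorem itself, and your attempt has to be judged on its own.

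Your scaffolding is correct: the two easy inclusions via Dixmier--Malliavin, the identity $(\exp X-1)u=X\bigl(\int_0^1\exp(tX)u\,dt\bigr)$ and the telescoping giving $\sum_{g\in G_0}(g-1)V\subseteq\fg V$, and the reduction to the identity component using the finiteness of the set of components together with the cancellation $\sum_i c_i=0$ are all fine (the bookkeeping $f_i=R_{b_i}\check f_i$ versus $f_i=R_{b_i^{-1}}\check f_i$ is harmless, since at that point any group element in the right component will do). The genuine gap is exactly where you locate it, and your sketch does not close it. What you need is the purely algebraic identity $I(G_0)=\mathrm{span}\{R_h\psi-\psi : h\in G_0,\ \psi\in S(G_0)\}$, with finite sums and no closure. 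The integration-by-parts manipulation in exponential coordinates that you invoke turns $\phi\ast\psi-(\int\phi)\,\psi$ into terms of the form $\widetilde X\eta$ with $X\in\fg$, $\eta\in S(G_0)$, i.e.\ elements of $\fg\cdot S(G_0)$, together with finitely many genuine translation differences; it does not produce finite combinations of $R_h\eta-\eta$ alone, and any attempt to discretize the integral $\int\phi(g)(R_g\psi-\psi)\,dg$ instead yields an infinite series, which is exactly the passage from closures to algebraic spans you are not allowed to take for granted. Converting a single derivative $\widetilde X\psi$ into a finite sum of translation differences is itself the hard analytic content: already for $G_0=\R$ it amounts, after Fourier transform, to solving $\sqrt{-1}\,\xi\,\hat\psi(\xi)=\sum_i\bigl(e^{\sqrt{-1}h_i\xi}-1\bigr)\hat\eta_i(\xi)$ with all $\hat\eta_i$ Schwartz, which forces a Diophantine choice of the shifts $h_i$ and nontrivial lower bounds for $|e^{\sqrt{-1}h_i\xi}-1|$ off the common zero. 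Nor can you settle for the weaker output ``span of differences plus $\fg S(G_0)$'': feeding that into your reduction only gives $I(G)V=\fg V+\sum_{a\in A}(a-1)V$, and the remaining containment $\fg V\subseteq\sum_{g\in G}(g-1)V$ --- that is, the first equality of the lemma --- is equivalent to the step you skipped, because by Dixmier--Malliavin $Xv=(\widetilde X\phi)\cdot w$, so it again asks for $\widetilde X\phi$ to be a finite sum of translation differences. In effect your argument reduces the lemma to its own hardest special case, the right regular representation on $S(G_0)$, which is precisely the content of \cite[Theorem 1.7]{CS21}.
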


\begin{proof}[Proof of Lemma \ref{lem:inv=coinv}]
Let $\hat{K}$ be the unitary dual of $K$. 
For $\delta\in\hat{K}$, write $\Pi(\delta)$ for the $\delta$-isotypic component of $\Pi$. 
Let $P_{\delta}(k)=\dim(\delta)\bar{\tr}_{\delta}(k)$ for $k\in K$. 
Then the action of the function $P_{\delta}$ is a continuous projection from $\Pi$ onto $\Pi(\delta)$.
Put $\Psi$ to be $\Pi$ or $\Pi_{\text{$K$-fin}}$, which contains $\Pi(\delta)$. 
The homomorphism $P_{\triv}$ induces a surjective continuous homomorphism $P$ from $\Psi_K$ onto $\Pi^K=\Pi(\triv)$. 
Since $P|_{\Pi^K}=\id_{\Pi^K}$, we have a direct sum decomposition $\Psi=\Pi^K\oplus \Ker(P_{\triv}|_{\Psi})$ as linear topological spaces. 

Let $Q$ be the composition of the inclusion $\Pi^K\to \Psi$ and the quotient $\Psi\to\Psi_K$. Then $Q$ is continuous and $P\circ Q=\id_{\Pi^K}$. 
Therefore what is left is to show $Q$ is surjective. 
More generally, let us prove $\overline{v}=\overline{P_{\triv}v}$ for any $v\in\Psi$, where $\bar{v}$ denotes the image of $v$ under the quotient $\Psi\to\Psi_K$. 

Fix $v\in\Psi$. 
When $\Psi=\Pi$, the series $\sum_{\delta\in\hat{K}} P_{\delta}v$ absolutely converges to $v$ by \cite[Theorem 4.4.2.1]{War72}. 
When $\Psi=\Pi_{\text{$K$-fin}}$, the same statement holds since $P_{\delta}v=0$ for almost all $\delta\in\hat{K}$. 
Put $\Gamma$ to be $K/K_0$, where $K_0$ denotes the neutral component of $K$. Then $\hat{\Gamma}$ can be regarded as a finite subset of $\hat{K}$. Define $v_1,v_2\in\Psi$ by 
\[v_1:=\sum_{\delta\in\hat{\Gamma}-\{\triv\}}P_{\delta}v,\quad 
v_2:=\sum_{\delta\in\hat{K}-\hat{\Gamma}}P_{\delta}v.\]

Let us consider the case $\delta\in\hat{\Gamma}-\{\triv\}$. 
We see $\sum_{\gamma\in\Gamma}\gamma x=0$ for $x\in\delta$ since $\delta^K=\{0\}$. 
Hence we have $x=(\#\Gamma)^{-1}\sum_{\gamma\in\Gamma}(1-\gamma)x$, and $\delta_K=0$. 
Therefore $\overline{v_1}=0$.

We next consider the case $\delta\in\hat{K}-\hat{\Gamma}$. 
In this case, the Casimir element $\Omega$ for $\fk$ acts on $\delta$ by a positive scalar $c(\delta)$. 
Moreover, we see $c(\delta)>1$ for almost all $\delta\in\hat{K}-\hat{\Gamma}$. 
Hence we can define 
\[u:=\sum_{\delta\in\hat{K}-\hat{\Gamma}}c(\delta)^{-1}P_{\delta}v\in\Psi\]
since $\sum_{\delta\in\hat{K}}P_{\delta}v$ absolutely converges in $\Psi$. 
By Lemma \ref{lem:coinv}, $\Omega\in\fk_{\C}U(\fk_{\C})$ and 
$\Omega u=v_2$,  we have $\overline{v_2}=0$. 
Therefore $\overline{v}=\overline{P_{\triv}v+v_1+v_2}=\overline{P_{\triv}v}$ and the proof is complete.
\end{proof}

\subsection*{Acknowledgements}
The first author is grateful to Hang Xue for bringing her attention to the theory of Schwartz homology.
M.S.\,was partially supported by JSPS Research Fellowship for Young Scientists No.20J00434 and Grant-in-Aid for Early-Career Scientists No.22K13891.
H.T.\,was partially supported by JSPS Research Fellowship for Young Scientists No.20J00024 and Grant-in-Aid for Early-Career Scientists No.23K12947.


\begin{thebibliography}{00}
\providecommand{\natexlab}[1]{#1}
\providecommand{\url}[1]{\texttt{#1}}
\expandafter\ifx\csname urlstyle\endcsname\relax
  \providecommand{\doi}[1]{doi: #1}\else
  \providecommand{\doi}{doi: \begingroup \urlstyle{rm}\Url}\fi

\bibitem{AV16}
Adams, Jeffrey, and D.~A. Vogan. 
``Contragredient representations and characterizing the local Langlands correspondence" 
\textit{Amer. J. Math.} 138, no.~3 (2016): 657--682.

\bibitem{BK14}
Bernstein, J., and B.~Kr\"{o}tz.
``Smooth {F}r\'{e}chet globalizations of {H}arish-{C}handra modules.''
\textit{Israel J. Math.} 199, no.~1 (2014): 45--111.

\bibitem{BW00}
Borel A., and N.~R. Wallach.
\textit{Continuous cohomology, discrete subgroups, and representations of reductive groups}, volume~67 of \textit{Mathematical Surveys and Monographs}, 2nd ed.
Amer. Math. Soc., Providence, RI, 2000.

\bibitem{BM19}
Broussous P., and N.~Matringe.
``Multiplicity one for pairs of prasad--takloo-bighash type.''
\textit{Int. Math. Res. Not. IMRN} 2021, no.~21 (2021).

\bibitem{CHM00}
Casselman, W., H.~Hecht and D.~Mili\v{c}i\'{c}.
``Bruhat filtrations and {W}hittaker vectors for real groups.''
In \textit{The mathematical legacy of {H}arish-{C}handra ({B}altimore, {MD}, 1998)}, volume~68 of \textit{Proc. Sympos. Pure Math.}, 151--190.
Amer. Math. Soc., Providence, RI, 2000.
  
\bibitem{CS21}
Chen, Y., and B.~Sun.
``Schwartz homologies of representations of almost linear nash groups.''
\textit{J. Funct. Anal.} 280, no.~7 (2021): 108817.

\bibitem{Cho19}
Chommaux, M.
``Distinction of the steinberg representation and a conjecture of prasad and takloo-bighash.''
\textit{J. Number Theory} 202 (2019): 200--219.

\bibitem{Fen53}
Fenchel, W., and D.~W. Blackett.
``Convex cones, sets and functions.''
{P}rinceton {U}niversity, {D}epartment of {M}athematics, {L}ogistics {R}esearch {P}roject, 1953.

\bibitem{GGP}
Gan, W.-T., B.~Gross, and D.~Prasad.
``Symplectic local root numbers, central critical {L}-values, and restriction problems in the representation theory of classical groups.''
\textit{Ast\'erisque} 346 (2012): 1--109.

\bibitem{J79}
Jacquet, H.
``Principal $l$-functions of the linear group.''
In \textit{Automorphic Forms, Representations, and $L$-Functions ({P}roc. {S}ympos. {P}ure {M}ath., {O}regon {S}tate {U}niv., {C}orvallis, {O}re., 1977)}, volume~33 of \textit{Proc. Sympos. Pure Math.}, part 2, 63--86.
Amer. Math. Soc., Providence, R.I., 1979.

\bibitem{K94}
Knapp, A.~W.
``Local langlands correspondence: The {A}rchimedean case.''
In \textit{Motives ({S}eattle, {WA}, 1991)}, volume~55 of \textit{Poc. Sym. Pure Math.}, 393--410.
Amer. Math. Soc., Providence, RI, 1994.

\bibitem{Kna01}
Knapp, A.~W.
\textit{Representation theory of semisimple groups}, 
\textit{Princeton Landmarks in Mathematics}, Princeton University Press, Princeton, NJ, 2001.

\bibitem{KV95}
Knapp, A.~W., and D.~A. Vogan.
\textit{Cohomological induction and unitary representations}, volume~45 of \textit{Princeton Mathematical Series}, 
Princeton University Press, Princeton, NJ, 1995.

\bibitem{LLY21}
Li, N., G.~Liu, and J.~Yu.
``A proof of {C}asselman's comparison theorem.''
\textit{Represent. Theory} 25 (2021): 994--1020.

\bibitem{Lu22}
Lu, H.
``Multiplicity one for the pair $({GL_n(D),GL_n(E)})$.''
\textit{Transform. Groups} (2022).

\bibitem{Mat79}
Matsuki, T.
``The orbits of affine symmetric spaces under the action of minimal parabolic subgroups.''
\textit{J. Math. Soc. Japan} 31, no.~2 (1979): 331--357.

\bibitem{OS84}
Oshima, T., and J.~Sekiguchi.
``The restricted root system of a semisimple symmetric pair.''
In \textit{Group representations and systems of differential equations ({T}okyo, 1982)}, volume~4 of \textit{Adv. Stud. Pure Math.}, 433--497.
North-Holland, Amsterdam, 1984.

\bibitem{PTB11}
Prasad, D., and R.~Takloo-Bighash.
``Bessel models for $\mathrm{GSp}(4)$.''
\textit{J. Reine Angew. Math.} 655 (2011): 189--243.

\bibitem{Sec20}
S\'echerre, V.
\textit{Repr\'esentations cuspidales de $\gl_r(d)$ distingu\'ees par une involution int\'erieure}, 
preprint 2020, \url{https://arxiv.org/abs/2005.05615}.

\bibitem{Suz}
Suzuki, M.
``Classification of standard modules with linear periods.''
\textit{J. Number Theory} 218 (2021): 302--310.

\bibitem{SX}
Suzuki, M., and H.~Xue.
``Linear intertwining periods and epsilon dichotomy for linear models.''
\textit{Math.Ann.} (2023),  \url{https://doi.org/10.1007/s00208-023-02615-9}.

\bibitem{T79}
Tate, J.
``Number theoretic background.''
In \textit{Automorphic Forms, Representations, and $L$-Functions ({P}roc. {S}ympos. {P}ure {M}ath., {O}regon {S}tate {U}niv., {C}orvallis, {O}re., 1977)}, volume~33 of \textit{Proc. Sympos. Pure Math.}, part 2, 3--26. Amer. Math. Soc., Providence, RI, 1979.

\bibitem{Vog81}
Vogan, D.~A.
\textit{Representations of real reductive {L}ie groups}, 
volume~15 of \textit{Progr. Math.}, 
Birkh\"{a}user, Boston, Mass., 1981.

\bibitem{Wal84}
Wallach, N.~R.
``The asymptotic behavior of holomorphic representations.''
\textit{M\'{e}m. Soc. Math. Fr. (N.S.)} 15 (1984): 291--305.

\bibitem{War72}
Warner, G.
\textit{Harmonic analysis on semi-simple {L}ie groups. {I}}, 
volume~188 of \textit{Grundlehren Math. Wiss.}, 
Springer-Verlag, New York-Heidelberg, 1972.

\bibitem{Xue2}
Xue, H.
\textit{Bessel models for unitary groups and schwartz homology}, 
preprint 2020, \url{https://www.math.arizona.edu/~xuehang/lggp_generic_v1.pdf}.

\bibitem{Xue}
Xue, H.
``Epsilon dichotomy for linear models.''
\textit{Algebra Number Theory} 15, no.~1 (2021): 173--215.

\bibitem{Yok09}
Yokota, I.
\textit{Exceptional Lie groups}, 
preprint 2009, \url{http://arxiv.org/abs/0902.0431}.


\end{thebibliography}
\end{document}